\newtheorem{theorem}{Theorem}%[section]
\newtheorem{lemma}[theorem]{Lemma}
\newtheorem{proposition}[theorem]{Proposition}
\newtheorem{corollary}{Corollary}[theorem]
\newtheorem{definition}[theorem]{Definition}
\newtheorem{remark}{Remark}
\newtheorem{assumption}{Assumption}
\newtheorem{example}[theorem]{Example}
\definecolor{mypink}{RGB}{231,84,128}
\newif\ifshowcomments
  \NewDocumentCommand{\mengqi}{m}{{\color{mypink} [\textbf{Mengqi}: #1]}}
  \NewDocumentCommand{\mengqi}{m}{}
\newcommand{\osc}{\operatorname{osc}}
\begin{document}

%\begin{frontmatter}

%% Title, authors and addresses

%% use the tnoteref command within \title for footnotes;
%% use the tnotetext command for theassociated footnote;
%% use the fnref command within \author or \affiliation for footnotes;
%% use the fntext command for theassociated footnote;
%% use the corref command within \author for corresponding author footnotes;
%% use the cortext command for theassociated footnote;
%% use the ead command for the email address,
%% and the form \ead[url] for the home page:
%% \title{Title\tnoteref{label1}}
%% \tnotetext[label1]{}
%% \author{Name\corref{cor1}\fnref{label2}}
%% \ead{email address}
%% \ead[url]{home page}
%% \fntext[label2]{}
%% \cortext[cor1]{}
%% \affiliation{organization={},
%%            addressline={}, 
%%            city={},
%%            postcode={}, 
%%            state={},
%%            country={}}
%% \fntext[label3]{}

\title{Imprecise Markov Semigroups and their Ergodicity} %% Article title

%% use optional labels to link authors explicitly to addresses:
%% \author[label1,label2]{}
%% \affiliation[label1]{organization={},
%%             addressline={},
%%             city={},
%%             postcode={},
%%             state={},
%%             country={}}
%%
%% \affiliation[label2]{organization={},
%%             addressline={},
%%             city={},
%%             postcode={},
%%             state={},
%%             country={}}

\author{Michele Caprio}\address{Department of Computer Science, The University of Manchester}

\author{Mengqi Chen}\address{Department of Statistics, University of Warwick}

%\date{\today}
%% Abstract
\begin{abstract}
We introduce the concept of an imprecise Markov semigroup $\mathbf{Q}$. It is a tool that allows us to represent ambiguity around both the transition probabilities and the invariant measure of a continuous-time Markov process via a collection of Markov semigroups, each associated with a (possibly different) Markov process. 
We use techniques from topology, geometry, and probability to analyze ergodic limits under model uncertainty encoded by \(\mathbf Q\). We establish long-term bounds that are uniform in the initial state and identify regimes in which the imprecision in these bounds collapses asymptotically. Our results are proved in progressively more general settings. We first assume that \(\mathbf Q\) is compact and that the state space is Euclidean or a Riemannian manifold, working with a fixed bounded observable. We then allow the state space to be standard Borel, while keeping \(\mathbf Q\) compact and the observable fixed. Finally, we drop compactness and work on Polish metric spaces of finite diameter, where we treat arbitrary bounded Lipschitz observables.
%We prove ergodicity at increasing levels of generality, namely first when $\mathbf{Q}$ is compact and the state space is Euclidean or a Riemannian manifold, then if $\mathbf{Q}$ is compact but the state space is an arbitrary measurable space, and finally only requiring that the state space is Polish metric with finite diameter.
%both when the state space is Euclidean or a Riemannian manifold, and when it is an arbitrary measurable space. 
The importance of our findings for the fields of artificial intelligence and computer vision is also discussed at a high level; In particular, in the study of how the probability of an output evolves over time as we perturb the input of a convolutional autoencoder.
\end{abstract}

\iffalse 
%%Research highlights
\begin{highlights}
\item A New Tool -- the Imprecise Markov Semigroup -- to Study Ambiguity in a Stochastic Process
\item Imprecise Markov Semigroups allow to Work in Continuous Time, and to Take into Account the Geometry of the State Space to Study its Long-Term Behavior
\item The Applications of Imprecise Markov Semigroups to Artificial Intelligence Research are Discussed
\end{highlights}
\fi 

\begin{keyword}
Imprecise Markov processes \sep Markov Diffusion Operators \sep Bakry-Émery curvature \sep Ergodic Processes \sep Autoencoders\\
\MSC 60J60 \sep 58J65 \sep 62A01
\end{keyword}

%\subjclass{60J60, 58J65, 62A01}
%
%\keywords{Imprecise Markov processes,  Markov Diffusion Operators, Bakry-Émery curvature, Ergodic Processes, Autoencoders}
%
\maketitle

%% Keywords
% \begin{keyword}
% %% keywords here, in the form: keyword \sep keyword
% Imprecise Markov processes \sep Markov Diffusion Operators \sep Bakry-Émery curvature \sep Ergodic Processes \sep Autoencoders
% %% PACS codes here, in the form: \PACS code \sep code

% %% MSC codes here, in the form: \MSC code \sep code
% %% or \MSC[2008] code \sep code (2000 is the default)
% \MSC[2020] 60J60 \sep 58J65 \sep 62A01
% \end{keyword}

% \end{frontmatter}

%% Add \usepackage{lineno} before \begin{document} and uncomment 
%% following line to enable line numbers
%% \linenumbers

%% main text
%%

%% Use \section commands to start a section
\section{Introduction}\label{intro}

When dealing with a stochastic phenomenon whose future evolution is independent of its history, it is customary to model its behavior via a stochastic process enjoying the Markov property. Loosely, this means that the law of the realization $X_T$ of the process $(X_t)_{t\geq 0}$ at time $T$, given what happened up to time $s <T$, is equivalent to the law of $X_T$, given only $X_s$.
%probability that the realization $X_T$ of the process $(X_t)_{t\geq 0}$ at time $T$ belongs to some set of interest $A$, given the previous realizations $X_{T-1}=x_{T-1}, X_{T-2}=x_{T-2},\ldots$, is equal to the probability of $X_T$ being in $A$, given only what happened at time $T-1$. 
We can of course express this property in a slightly more general way, by saying that the expectation of a generic bounded measurable real-valued function $\tilde f$ of $X_T$, given what happened in the past up to time $s<T$, is equal to the expectation of $\tilde{f}(X_T)$, given only $X_{s}$.
%\footnote{Throughout the paper, we do not use the pronoun ``we'' as a form of pluralis majestatis, but rather to comply to the convention of scientific papers of writing in plural form.} 
Notice that in the present paper we perpetrate a terminology abuse, and refer to real-valued functions on the state space $E$ as ``functionals''. In addition, we only consider continuous-time processes $(X_t)_{t\geq 0}$.

%The two definitions coincide when $f$ is the identity function. 
%\footnote{Here, we only reason in discrete time for ease of exposition. In the main body of the paper, we let $t\in\mathbb{R}_+$.}

%As we shall discuss more formally in the preliminary Section \ref{prelim}, t
Two elements play a key role in a process enjoying the Markov property. The first is the 
%law of the process at time $t=0$ (i.e. the 
law of the initial state $X_0$, and the second is the transition probability from the initial state $X_0$ to any state $X_t$ of the process, i.e. the probability that, given that the process begins at $X_0$, it ``reaches a certain value'' at time $t$. 
%consider the case where such transition probability is imprecise: for every $t$, we only know that it belongs to a certain set of probabilities $\mathcal{P}_t$. 
In many applications, relaxing the hypothesis that one or both of them are precise is natural, and leads to less precise but more robust results.

Imprecise probability \citep{intro_ip,decooman,walley} is a field whose techniques naturally lend themselves to be used for this purpose. In fact, imprecise Markov processes are a flourishing area of the imprecise probability literature \citep{CROSSMAN20101085,sum-prod,de_Cooman_Hermans_Quaeghebeur_2009,delgado2011efficient,jaeger,krak,hitting,tjoens,trevizan2007planning,trevizan2008mixed,troffaes2019},\footnote{Much of the existing literature focuses on discrete-time  processes.} with numerous applications e.g. in engineering \citep[Chapter 4]{engin}, including aerospace engineering \citep[Chapter 5]{engin2}, and economics \citep{boppi,denk2,DENK}. As the name suggests, imprecise Markov processes allow to take into account imprecision in the initial and transition probabilities (and also in the invariant measure, when it exists) of a Markov process.
%are specifically designed

%In this work, we introduce the concept of an imprecise Markov semigroup, which allows us to capture the ambiguity that the scholar faces around both the ``correct'' initial law and the ``correct'' transition probability.
In this paper, we introduce a new concept that we call an {\em Imprecise Markov Semigroup}. It is a collection $\mathbf{Q}$ of (precise) Markov semigroups, each associated with a Markov process.\footnote{Here, by ``precise'' we refer to the classical notion of a Markov semigroup.} Modeling the imprecision on the transition probabilities and on the invariant measure (whenever it exists) via such a collection, instead of using lower previsions \citep{decooman} or Choquet integrals \citep{choquet}, allows us to
%Considering such a collection 
%(instead of a single Markov semigroup) 
%allows us to capture the scholar's uncertainty around both the initial and the transition probabilities of the Markov process of interest.
%Loosely, it can be thought of as a (diffusion) operator version of ``regular'' imprecise Markov processes. 
%By taking this semigroup approach, we
%develop new 
combine geometric and probabilistic techniques based on the Markov diffusion operators literature \citep{bakry} that make the study of the limiting behavior of imprecise Markov processes (captured by the limit behavior of the imprecise Markov semigroup) easier.
%\footnote{\label{nice-footnote}Since we consider a collection of precise Markov semigroups, one could say ours is a ``precise approach'' to imprecision.} 
In particular, we are able to relate such a behavior to the curvature of the state space -- that is, the space $E$ that the elements of the process $(X_t)_{t\geq 0}$ take values on. This also links the present work to the study of the geometry of uncertainty \citep{cuzzolin_book}.

Our goal is to study what we call the {\em ergodic property} of an imprecise Markov semigroup. Throughout the paper we use ``ergodicity'' in the semigroup/long-term sense: We study the asymptotic behavior, as $t\to\infty$,
of conditional expectations of observables under the Markov evolution, i.e. $P_t f(x)=\mathbb E[f(X_t)\mid X_0=x]$.
In Sections~\ref{riemann} and \ref{idmt-section}, the bounds we derive are formulated for a \emph{fixed} (bounded) test functional $\tilde f$;
Accordingly, the conclusions should be read as long-term bounds for the expectation of $\tilde f$ whose limiting constants do not depend on the initial state, 
%(in the corresponding $\mu$-a.e. and/or $\mathbb L^2(\mu)$ sense),
rather than as full distributional convergence -- which would correspond to convergence for all $f\in B(E)$.
In contrast, in Section~\ref{subsec:wass-sublinear} we work on the larger class $\mathrm{Lip}_b(E)$ of bounded Lipschitz functionals and obtain uniform-in-$x$ limits
for the static upper and lower envelopes $\overline T_t$ and $\underline T_t$ of $\mathbf{Q}$, respectively, and for the time-consistent (upper/lower) Nisio semigroups when switching is allowed. Such results are derived
for every bounded Lipschitz observable $f$, which is closer in spirit to distributional stability.
Finally, we emphasize that our limits are \emph{not} Ces\`aro time averages:
We do \emph{not} prove statements of the form $\frac1T\int_0^T f(X_t) \mathrm dt \to \int f \mathrm d\mu$.
Rather, we establish convergence of the \emph{semigroup} as $t\to\infty$ (equivalently, convergence of one-time marginals and/or of their upper/lower envelopes when switching is allowed). When appropriate (e.g. under standard invariance/ergodicity assumptions),
classical ergodic theorems can then be invoked to relate such semigroup convergence to time average statements, but this is not the
object of the present results.

The similarities and differences between our results and the existing ones in the imprecise Markov literature and in imprecise ergodic theory \citep{KozineUtkin2002intervalValuedFiniteMarkovChains,HermansDeCooman2012ergodicUpperTransitionOperators,Skulj2013classificationInvariantDistributions,Skulj2015efficientComputationCTIMC,cerreia,DECOOMAN201618,dipk,ergo_me}, are discussed in
%at the end of 
Sections \ref{prelim} and \ref{riemann}, and \ref{subsec:wass-sublinear}.

Besides being interesting in their own right (and because they provide new techniques to the fields of imprecise Markov processes), we believe that our findings will prove useful in future research in machine learning, artificial intelligence, and computer vision. Let us add a high-level discussion on this topic. In recent works on convolutional autoencoders, images (or, more in general, inputs from an input space $\mathcal{X}$) are ``projected'' onto a low-dimensional manifold $E$ by an encoding function $\phi_\text{enc}:\mathcal{X} \rightarrow E$. Different ``portions'' of the surface $\mathcal{S}(E)$ of such a manifold capture different features of the input image \citep{YU202348}. For instance, a portion  of the surface may correspond to the feature ``dog'', so that dog images are projected there. This means that we can consider a partition $\mathscr{E}=\{E_j\}_{j=1}^J$ of $\mathcal{S}(E)$ whose elements correspond to ``feature portions''. For example, $J$ could be equal to the cardinality $|\mathcal{Y}|$ of the output space $\mathcal{Y}$ in the case of classification problems. Then, a decoding function $\phi_\text{dec}:E \rightarrow \Delta_\mathcal{Y}$ returns the probabilities of $y_j \in\mathcal{Y}$ being the correct output for the input $x \in\mathcal{X}$. Here $\Delta_\mathcal{Y}$ denotes the space of countably additive probabilities on $\mathcal{Y}$. For example, in a classification case, $(\phi_\text{dec} \circ \phi_\text{enc})(x)=(P(y_1),\ldots,P(y_J))^\top$, where $\mathcal{Y}=\{y_1,\ldots,y_J\}$ is the label space, and $P(y_j)$ is the probability estimated by the model that $y_j$ is the correct label for input $x$, $j\in\{1,\ldots,J\}$. A visual representation is given in Figure \ref{IMSG_heuristic}.

\begin{figure}[h]%
\centering
\includegraphics[width=.7\textwidth]{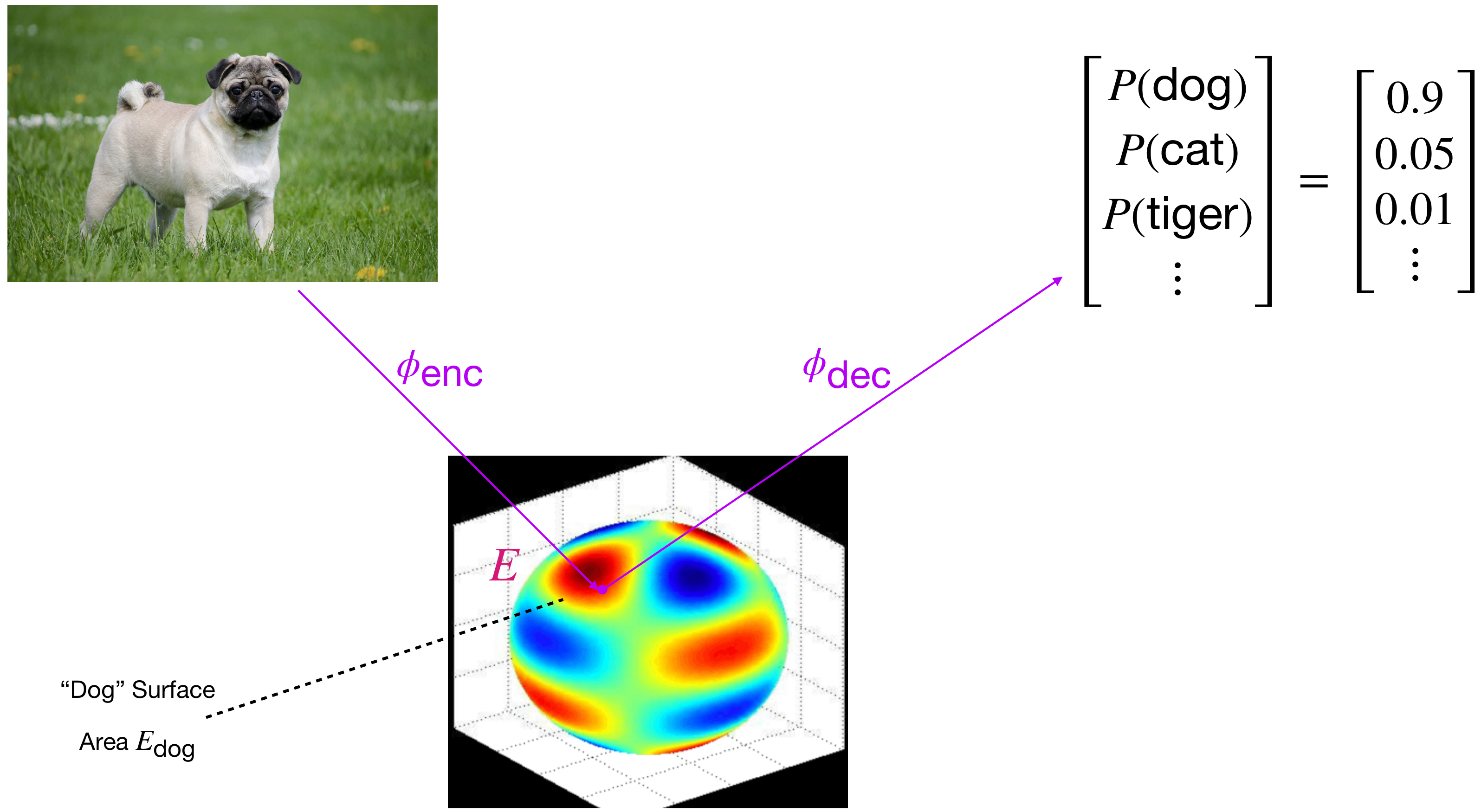}
\caption{In this figure, we provide a visual representation of the functioning of a convolutional autoencoder in an image classification problem. The input image of a pug is mapped by encoding function $\phi_\text{enc}$ to the surface area $E_\text{dog} \in\mathscr{E}$ corresponding to the feature ``dog''. Such a projection is then mapped by decoding function $\phi_\text{dec}$ to the probability space over the labels. In this simple example the model gives a high probability to the first label, i.e. ``dog''.}\label{IMSG_heuristic}
\end{figure}

Now, imagine that we want to study a smooth transition between a dog and a cat image. Intuitively, this corresponds to a ``walk'' from the ``dog portion'' $E_\text{dog}$ of the manifold's surface $\mathcal{S}(E)$ to the ``cat portion'' $E_\text{cat}$ \citep{perez}. A visual example is given in Figure \ref{IMSG_heuristic2}. Our results, and especially Corollaries \ref{limiting-cor} and \ref{cor:pointwise-erg-wass}, may be very useful when (i) such a ``walk'' is not deterministic, but rather modeled via a Markov process; (ii) the user is uncertain about which transition probabilities to choose (and on the nature of the invariant measure); and (iii) we are interested in the dynamics of the probability of a particular label during such a walk. For example, if we are interested in how the probability of the first label 
%being the correct one 
evolves over time, we could consider function $z \mapsto \tilde{f}(z)=e_1^\top \phi_\text{dec}(z)$, where $e_1=(1,0,0,\ldots,0)^\top$ is a $J$-dimensional standard basis vector, and $\tilde{f}$ is an example of a  bounded functional on $E$, as discussed earlier in this section.
%for the latter.

\begin{figure}[h]%
\centering
\includegraphics[width=.55\textwidth]{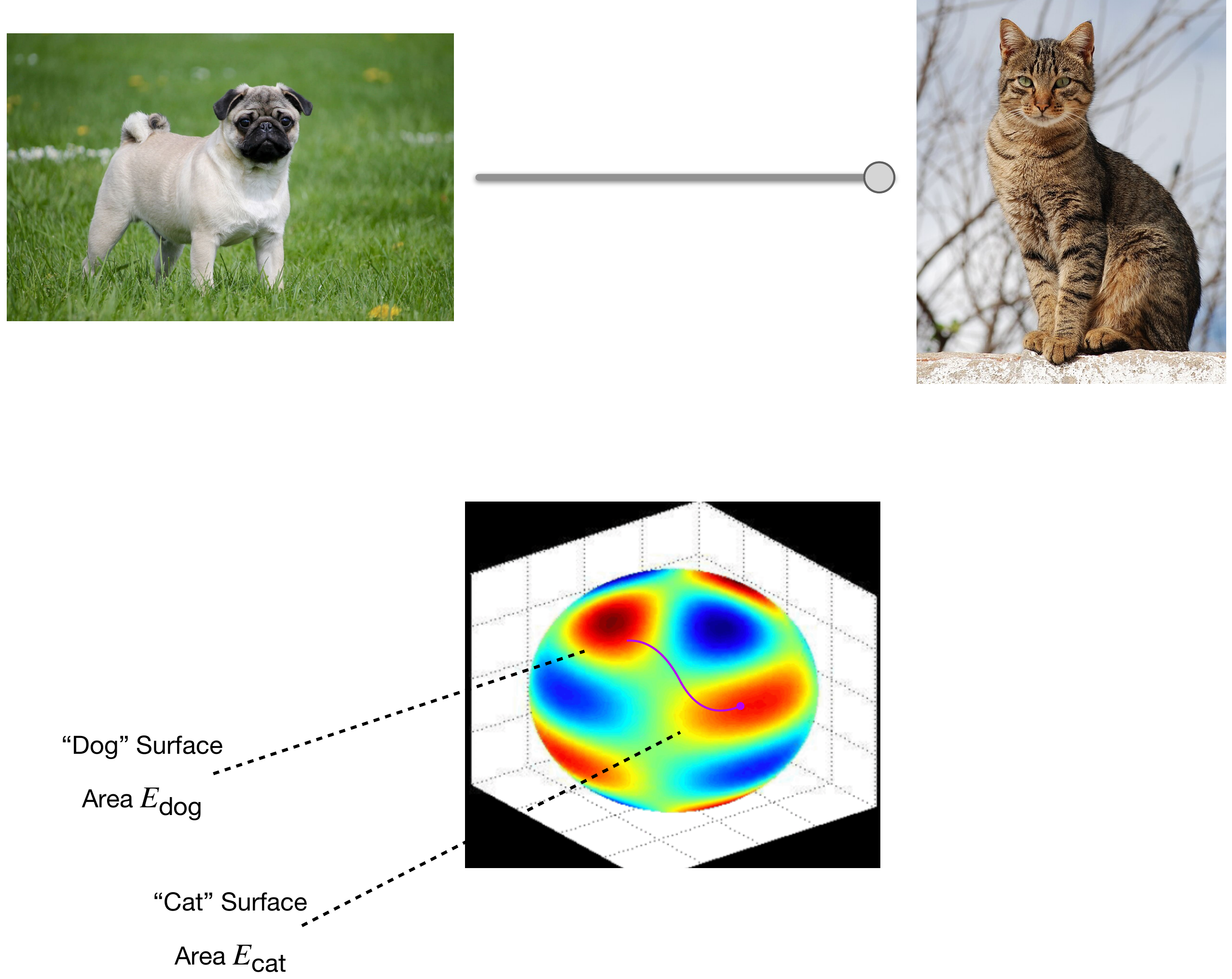}
\caption{In this figure, we represent the smooth transition between a dog and a cat pictures. Moving a cursor between the pictures causes a walk from the ``dog portion'' $E_\text{dog}$ of the manifold's surface $\mathcal{S}(E)$ to the ``cat portion'' $E_\text{cat}$, depicted as a purple smooth curve. The results we present in this paper are useful to determine the behavior of a similar walk that is not deterministic, when it is not possible to determine exactly transition probabilities (and the invariant measure, when it exists), and when we are interested e.g. in the evolution of the probability of the first label, $y_1=\text{``dog''}$, captured by the functional $z \mapsto \tilde{f}(z)=e_1^\top \phi_\text{dec}(z)$.}\label{IMSG_heuristic2}
\end{figure}

Our paper is structured as follows. Section \ref{prelim} introduces the concept of an Imprecise Markov Semigroup $\mathbf{Q}$. Section \ref{riemann} derives our results when state space $E$ is a Euclidean space or a Riemannian manifold, $\mathbf{Q}$ is compact, and the bounded test functional $\tilde f$ is fixed. Section \ref{idmt-section} generalizes the findings in Section \ref{riemann} to the case of $E$ being an arbitrary measurable space, while keeping $\mathbf{Q}$  compact and the bounded test functional $\tilde f$ fixed. Section \ref{subsec:wass-sublinear} further extends our results: It is only required that $E$ is Polish metric with finite diameter and that the test map is any bounded Lipschitz functional. Section \ref{concl} concludes our work. We prove Theorem \ref{thm-main-2} in Appendix \ref{app-a}, and we provide a primer on Markov semigroups and diffusion Markov triples in Appendix \ref{app-b}.

%In formulas, for a stochastic process $(X_t)_{t\geq 0}$ on a measurable space $(E,\mathcal{F})$, for all $A\in\mathcal{F}$ and all $T \in \mathbb{Z}_+$, we have that $P(X_{T}\in A \mid X_{T-1}=x_{T-1}, X_{T-2}=x_{T-2},\ldots)=P(X_{T}\in A \mid X_{T-1}=x_{T-1})$. This can be expressed slightly more generally by writing $\mathbb{E}[f(X_T) \mid X_{T-1}=x_{T-1}, X_{T-2}=x_{T-2},\ldots] = \mathbb{E}[f(X_T) \mid X_{T-1}=x_{T-1}]$, where $f$ is a bounded measurable functional on $E$,\footnote{} and the integral for the expectation is taken on $A\in\mathcal{F}$. Of course, if $f$ is the identity function, we recover the ``loose definition'' given in terms of probability $P$.

\section{Imprecise Markov Semigroups}\label{prelim}
In this section, we introduce the concept of an Imprecise Markov Semigroup. Its definition hinges on the notions of  Markov processes, Markov semigroups, carré du champ and iterated carré du champ operators, and Bakry-Émery curvature. We refer the unfamiliar reader to Appendix \ref{back-MSG}.
%the preliminary notions that are needed to understand our results. 

%, can skip to Section \ref{riemann} after Definition \ref{symmetry}.

The following is introduced in \citet[Chapter 1.1]{bakry}. It is the minimal structure we require throughout the paper for the space where our stochastic processes of interest live.
\begin{definition}[Good Measurable Space \citep{bakry}]\label{good}
    A {\em good measurable space} is a measurable space $(E,\mathcal{F})$ for which the measure decomposition theorem applies, and for which there is a countable family of sets which generates the $\sigma$-algebra $\mathcal{F}$.
    %Let $(E,\mathcal{F})$ be a measurable space. All (positive) measures on $(E,\mathcal{F})$ are assumed to be $\sigma$-finite. Given a ($\sigma$-finite) measure $\mu$ on $(E,\mathcal{F})$, we say that $(E,\mathcal{F},\mu)$ is a \textit{good measure space} if there is a countable family of sets which generates $\mathcal{F}$ (up to sets of $\mu$-measure $0$), and that both the measure decomposition theorem and the bi-measure theorem apply.
\end{definition}

Here, the measure decomposition theorem refers to the
\emph{disintegration theorem} (existence of regular conditional distributions).
Concretely, if $\mu$ is a probability measure on $(E\times E,\mathcal F\otimes\mathcal F)$
and $\mu_1$ is its first marginal, then there exists a Markov kernel $k(x,\text{d}y)$
(measurable in $x$) such that
\(
\mu(\text{d}x,\text{d}y) = k(x,\text{d}y) \mu_1(\text{d}x)
\). 
Equivalently, $\mu$ decomposes into the conditional laws of the second coordinate given the first. This is typically guaranteed on standard Borel spaces (in particular, Polish spaces with their Borel $\sigma$-algebras).

\iffalse 
Let $\Delta_E$ denote the space of (countably additive) probability measures on the good measurable space $(E,\mathcal{F})$, and endow it with the weak topology. Consider then a weakly closed (or equivalently, w-closed) set $\mathcal{P}^x_t$ of probability measures of the form $p_t(x,\cdot)$, where $p_t(x,A)$ is a probability kernel, $x \in E$, $A\in\mathcal{F}$, $t\geq 0$. For example, $\mathcal{P}^x_t$ could be the image of a w-closed, (upper/lower hemi)continuous correspondence \citep{aliprantis} between $E$ and $\Delta_E$ capturing the ambiguity faced by the agent in assessing the transition probability of the process from $X_0=x$ to $X_t$. 
\fi

In this work, we are interested in Markov semigroups $\mathbf{P}=(P_t)_{t \geq 0}$ associated with (time homogeneous) Markov processes $(X_t)_{t \geq 0}$. For such a Markov semigroup, for every $t \geq 0$, operator $P_t$ is such that 
%\mengqi{Maybe $\coloneqq$ is clearer?}

$$P_tf(x)\coloneqq \mathbb{E}\left[ f(X_t) \mid X_0=x \right] = \int_E f(y) p_t(x,\text{d}y), \quad f \in B(E) \text{, } x \in E,$$
where $B(E)$ is the space of bounded measurable functionals on $E$, and $p_t$ is the probability kernel describing the evolution of the Markov process. In words, operator $P_t$ applied on $f \in B(E)$ gives us the expected value of $f(X_t)$, ``weighted by'' the transition probability (from the starting value $X_0=x$ to $X_t$) of our Markov process. 
%As we shall see in more detail after Definition \ref{imsg-def}, w
We focus on this type of Markov semigroups because studying their (ergodic) behavior informs us about the (ergodic) behavior of the associated Markov processes. 

Throughout the paper, we will assume that the semigroups we consider always have an invariant probability measure $\mu$, so that the elements of $\mathbf{P}=(P_t)_{t \geq 0}$ are bounded operators in $\mathbb{L}^p(\mu)$, $p \in [1,\infty)$. In fact, most semigroups of interest have an invariant measure \citep[Section 1.2.1]{bakry}. A way to derive $\mu$ as a weak limit of a ``reasonable'' initial probability measure $\mu_0$ is inspected in \citet[Page 10]{bakry}. For a discussion on this matter, we refer the interested reader to Remark \ref{spotlight} in Appendix \ref{back-MSG}.

%We postpone to  a discussion on why we focus our attention to this type of Markov semigroups.

%In addition, we posit that the agent knows the distribution $\mu$ that $X_0$ is sampled from, so that, from \eqref{inv_eq}, we obtain that $\mathbb{E}[f(X_t)] =\int_E f \text{d}\mu = \mathbb{E}[f(X_0)]$, for all $f \in B(E)$, and all $t\geq 0$. 

We now introduce a space that will play a pivotal role in the definition of an Imprecise Markov Semigroup. Recall that, for all $t \geq 0$, the $t$-th element $P_t$ of Markov semigroup $\mathbf{P}=(P_t)_{t \geq 0}$ is a linear operator that sends bounded measurable functionals on $E$ to bounded measurable functionals on $E$. In formulas, we can write this as $P_t: B(E) \rightarrow B(E)$, or equivalently, $P_t \in B(E)^{B(E)}$. In turn, this implies that a Markov semigroup $\mathbf{P}$ can be seen as a function mapping bounded measurable functionals $f$ on $E$ to nets of bounded measurable functionals $\mathbf{P}f=(P_t f)_{t \geq 0}$ on $E$. In formulas, $\mathbf{P}: B(E) \rightarrow {B(E)}^{\mathbb{R}_+}$, or equivalently, $\mathbf{P} \in ({B(E)}^{\mathbb{R}_+})^{B(E)}$. 

\begin{definition}[Imprecise Markov Semigroup (IMSG)]\label{imsg-def-1}
    An Imprecise Markov Semigroup is a nonempty 
    %\mengqi{maybe worth adding ``nonempty''? } 
    collection $\mathbf{Q}$ of Markov semigroups (MSGs) associated with Markov processes. That is, $\mathbf{Q} \subset ({B(E)}^{\mathbb{R}_+})^{B(E)}$ such that
\begin{align}\label{imsg-first-property}
    \mathbf{Q} \subseteq \mathfrak{Q} \coloneqq \{\mathbf{P} \in ({B(E)}^{\mathbb{R}_+})^{B(E)} : \mathbf{P} \text{ is an MSG associated with a Markov process} \}.
\end{align}
\end{definition}
%\mengqi{This definition alone makes it look like $\mathbf{Q}$ is a ``universal'' collection of all MSGs. Maybe worth saying that $\mathbf{P}$ satisfies some conditions (depending on the specific scenario)} {\color{blue} MC: You're right. What about the following? $\mathbf{Q} \subseteq \mathfrak{Q} \coloneqq \{\mathbf{P} \in ({B(E)}^{\mathbb{R}_+})^{B(E)} : \mathbf{P} \text{ is an MSG associated with a Markov process} \}$.}

In Sections \ref{riemann} and \ref{idmt-section}, we will work with compact IMSGs. To do so, we need to introduce a notion of compactness in the space $({B(E)}^{\mathbb{R}_+})^{B(E)}$. Such a notion
%The notion of Imprecise Markov Semigroup 
hinges on the possibility of comparing the elements of $\mathbf{Q}$, when they are evaluated at some functional $\tilde{f}\in B(E)$.

First, suppose we are able to define a total preorder $\preceq^\text{tot}_{\tilde{f}}$ so that

$$\mathbf{P} \preceq^\text{tot}_{\tilde{f}} \mathbf{P^\prime} \iff P_t \tilde{f} \leq P^\prime_t \tilde{f} \text{, } \forall t \geq 0.$$
This relation is always reflexive and transitive on $\mathbf Q$; It need not be antisymmetric, since distinct semigroups may coincide on $\tilde f$.
We therefore introduce the equivalence relation
\[
\mathbf P \sim_{\tilde f} \mathbf P' \quad \Longleftrightarrow \quad
P_t\tilde f = P_t'\tilde f   \text{ for all } t\ge0,
\]
and work on the quotient set $\mathbf Q_{\tilde f}\coloneqq \mathbf Q/ \sim_{\tilde f}$.
The preorder $\preceq^\mathrm{tot}_{\tilde f}$ induces a genuine \emph{total order} on $\mathbf Q_{\tilde f}$ by
\[
[\mathbf P] \preceq^\mathrm{tot}_{\tilde f} [\mathbf P'] \quad \Longleftrightarrow \quad
\mathbf P \preceq^\mathrm{tot}_{\tilde f} \mathbf P',
\]
which is well-defined and antisymmetric by construction. Of course, here the square brackets denote equivalence classes. 
We have that, for all $[\mathbf{P}],[\mathbf{P^\prime}] \in \mathbf{Q}_{\tilde f}$, either
$[\mathbf{P}] \preceq^\text{tot}_{\tilde{f}} [\mathbf{P^\prime}]$ or
$[\mathbf{P^\prime}] \preceq^\text{tot}_{\tilde{f}} [\mathbf{P}]$.
In addition, the notions of maximal and greatest elements of $(\mathbf{Q}_{\tilde f},\preceq^\text{tot}_{\tilde{f}})$ coincide, and correspond to the classical notion of maximum (with respect to the total order $\preceq^\text{tot}_{\tilde{f}}$). Similarly, minimal and least element notions coincide, and correspond to the classical notion of minimum.

Now, let $\tau_{\preceq^\text{tot}_{\tilde{f}}}$ be the order topology on $\mathbf Q_{\tilde f}$. That is (provided $|\mathbf{Q}_{\tilde f}|\geq 2$), $\tau_{\preceq^\text{tot}_{\tilde{f}}}$ is the topology whose base is given by the open rays
$\{[\mathbf{P}] : [\mathbf{P}_1] \prec^\text{tot}_{\tilde{f}} [\mathbf{P}]\}$ and
$\{[\mathbf{P}] : [\mathbf{P}] \prec^\text{tot}_{\tilde{f}} [\mathbf{P}_2]\}$, 
%\mengqi{
where $\prec^\text{tot}_{\tilde{f}}$ is the strict counterpart of $\preceq^\text{tot}_{\tilde{f}}$,
%}
for all $[\mathbf{P}_1]$, $[\mathbf{P}_2]$ in $\mathbf{Q}_{\tilde f}$,
and by the open intervals
$([\mathbf{P}_1],[\mathbf{P}_2]) = \{[\mathbf{P}] : [\mathbf{P}_1] \prec^\text{tot}_{\tilde{f}} [\mathbf{P}] \prec^\text{tot}_{\tilde{f}} [\mathbf{P}_2]\}$.
This means that the $\tau_{\preceq^\text{tot}_{\tilde{f}}}$-open sets in $\mathbf{Q}_{\tilde f}$ are the sets that are a union of (possibly infinitely many) such open intervals and rays.

Assume that $\mathbf{Q}_{\tilde f}$ is $\tau_{\preceq^\text{tot}_{\tilde{f}}}$-compact.
This implies that $\mathbf{Q}_{\tilde f}$ contains the minimum and the maximum (with respect to the total order $\preceq^\text{tot}_{\tilde{f}}$), which we denote by $\underline{[\mathbf{P}]}_{\tilde{f}}$ and $\overline{[\mathbf{P}]}_{\tilde{f}}$, respectively.
We fix arbitrary representatives $\underline{\mathbf{P}}_{\tilde{f}}\in \underline{[\mathbf{P}]}_{\tilde{f}}$ and $\overline{\mathbf{P}}_{\tilde{f}}\in \overline{[\mathbf{P}]}_{\tilde{f}}$.
In turn, since $\underline{\mathbf{P}}_{\tilde{f}},\overline{\mathbf{P}}_{\tilde{f}} \in \mathbf{Q}$, they both are (precise) Markov semigroups associated with Markov processes by \eqref{imsg-first-property}.

Suppose now that the induced relation is not total. More formally, the preorder $\preceq^\text{part}_{\tilde{f}}$ on $\mathbf{Q}$ is defined similarly to before by
\[
\mathbf{P} \preceq^\text{part}_{\tilde{f}} \mathbf{P^\prime}
\iff
P_t \tilde{f} \leq P^\prime_t \tilde{f} \text{, } \forall t \geq 0.
\]
Consider its induced relation (a genuine partial order) on the quotient $\mathbf Q_{\tilde f}$. Contrary to before, this time there might exist $[\mathbf{P}],[\mathbf{P^\prime}] \in \mathbf{Q}_{\tilde f}$ such that
$[\mathbf{P}] \not\preceq^\text{part}_{\tilde{f}} [\mathbf{P^\prime}]$ and
$[\mathbf{P}] \not\succeq^\text{part}_{\tilde{f}} [\mathbf{P^\prime}]$. A remark is in order.
%\mengqi{I got slightly confused when reading this line because the definition of the notations $\prec^\text{tot}_{\tilde{f}}$ and $\preceq^\text{part}_{\tilde{f}}$ are the same, and the difference is whether the induced relation is total on $\mathbf{Q}$ or not. Maybe this is clear to a more informed reader!}

\begin{remark}[Antisymmetry and Identifiability]
    Fix $\tilde f\in B(E)$ and consider the preorder on $\mathbf Q$ defined by
\[
\mathbf P\preceq_{\tilde f}\mathbf P'
\quad\Longleftrightarrow\quad
P_t\tilde f \le P_t'\tilde f   \text{, } \forall t\ge0.
\]
This preorder is \emph{antisymmetric} on $\mathbf Q$ (hence a genuine partial order on the semigroups in $\mathbf Q$) if and only if the orbit map
\[
\mathbf P \mapsto (P_t\tilde f)_{t\ge0}\in B(E)^{\mathbb R_+}
\]
is injective on $\mathbf Q$, i.e. whenever $P_t\tilde f=P_t'\tilde f$ for all $t\ge0$, it follows that $\mathbf P=\mathbf P'$ as semigroups.
Equivalently, $\tilde f$ \emph{separates} the elements of $\mathbf Q$ through their time evolution. In general, a single functional $\tilde f$ will not be separating, so assuming antisymmetry from $\tilde f$ can be a strong identifiability restriction on $\mathbf Q$. However, antisymmetry can hold in structured families (e.g. low-dimensional parametric classes) when $\tilde f$ is chosen so that the trajectories $t\mapsto P_t\tilde f$ uniquely identify the underlying semigroup within $\mathbf Q$. Identifiability of Markov kernel/semigroup can be guaranteed via a \emph{separating (measure-determining) class} of test functions rather than on a single $\tilde f$ \citep{ethier1986markov,blount2010separating}. In this paper we instead allow an arbitrary, application-driven choice of $\tilde f$ and handle potential non-antisymmetry by working with the quotient $\mathbf Q_{\tilde f}\coloneqq\mathbf Q/ \sim_{\tilde f}$. In the special case where $\tilde f$ is separating on $\mathbf Q$, the equivalence relation $\sim_{\tilde f}$ is trivial and the quotient poset becomes unnecessary.
\end{remark}

Let us now introduce three set theoretic concepts, inspired by \citet{wolk1958order}, that will allow us to properly define a compact Imprecise Markov Semigroup even when only a partial order can be determined.

%\mengqi{I added the following comment, which I think might be useful positioning-wise, or if we want to assume that $\tilde f$ is separating.}

%\mengqi{}

\begin{definition}[Up/Down-Directed Subsets of $\mathbf{Q}_{\tilde f}$]\label{directed}
    A subset $\mathbf{S}$ of $\mathbf{Q}_{\tilde f}$ is {\em up-directed} if for all $[\mathbf{P}],[\mathbf{P^\prime}] \in \mathbf{S}$, there exists $[\mathbf{P^{\prime\prime}}]\in \mathbf{S}$ such that $[\mathbf{P}] \preceq^\text{part}_{\tilde{f}} [\mathbf{P^{\prime\prime}}]$ and $[\mathbf{P^\prime}] \preceq^\text{part}_{\tilde{f}} [\mathbf{P^{\prime\prime}}]$. We say that $\mathbf{S}$ is {\em down-directed} if the opposite holds. That is, if for all $[\mathbf{P}],[\mathbf{P^\prime}] \in \mathbf{S}$, there exists $[\mathbf{P^{\prime\prime}}]\in \mathbf{S}$ such that $[\mathbf{P^{\prime\prime}}] \preceq^\text{part}_{\tilde{f}} [\mathbf{P}]$ and $[\mathbf{P^{\prime\prime}}] \preceq^\text{part}_{\tilde{f}} [\mathbf{P^\prime}]$.
\end{definition}

\begin{definition}[Dedekind-Closed Subsets of $\mathbf{Q}_{\tilde f}$]\label{dedekind}
    A subset $\mathbf{K}$ of $\mathbf{Q}_{\tilde f}$ is {\em Dedekind-closed} if whenever $\mathbf{S}$ is an up-directed subset of $\mathbf{K}$, and $[\mathbf{P}]$ is the $\preceq^\text{part}_{\tilde{f}}$-least upper bound of $\mathbf{S}$ (or $\mathbf{S}$ is a down-directed subset of $\mathbf{K}$, and $[\mathbf{P}]$ is the $\preceq^\text{part}_{\tilde{f}}$-greatest lower bound of $\mathbf{S}$), then $[\mathbf{P}] \in \mathbf{K}$.
\end{definition}

\begin{definition}[Incomparability, Diversity, and $\preceq^\text{part}_{\tilde{f}}$-Width of $\mathbf{Q}_{\tilde f}$]\label{further-st-concepts}
    Two elements $[\mathbf{P}],[\mathbf{P^\prime}] \in \mathbf{Q}_{\tilde f}$ are {\em incomparable} if $[\mathbf{P}] \not\preceq^\text{part}_{\tilde{f}} [\mathbf{P^\prime}]$ and $[\mathbf{P}] \not\succeq^\text{part}_{\tilde{f}} [\mathbf{P^\prime}]$. A subset $\mathbf{S}$ of $\mathbf{Q}_{\tilde f}$ is {\em diverse} if $[\mathbf{P}],[\mathbf{P^\prime}] \in \mathbf{S}$ and $[\mathbf{P}]\neq[\mathbf{P^\prime}]$ implies that $[\mathbf{P}]$ and $[\mathbf{P^\prime}]$ are incomparable. Finally, we define the {\em $\preceq^\text{part}_{\tilde{f}}$-width} of $\mathbf{Q}_{\tilde f}$ to be the supremum of the set $\{k : k \text{ is the cardinality of a diverse subset of } \mathbf{Q}_{\tilde f}\}$. 
\end{definition}

%\mengqi{The set $\{k : k \text{ is the cardinality of a diverse subset of } \mathbf{Q}_{\tilde f}\}$ is a set of cardinals, and is not a subset of $\mathbf{Q}$, so it should not have a $\preceq^\text{part}_{\tilde{f}}$-least upper bound. Should this just be the sup?}

%, that is, $\mathbf{Q} = \{\mathbf{P} \in ({B(E)}^{\mathbb{R}_+})^{B(E)} : \mathbf{P} \text{ is a Markov semigroup} \}$. Fix any $\tilde{f} \in B(E)$, and endow $\mathbf{Q}$ with the partial order $\preceq_{\tilde{f}}$, defined as follows
%$$\mathbf{P} \preceq_{\tilde{f}} \mathbf{P^\prime} \iff P_t \tilde{f} \leq P^\prime_t \tilde{f} \text{, } \forall t \geq 0.$$
%Before going on, we need to introduce 

Definitions \ref{directed}, \ref{dedekind}, and \ref{further-st-concepts} are needed to introduce the concept of $\preceq^\text{part}_{\tilde{f}}$-compatible topology. Requiring that $\mathbf{Q}_{\tilde{f}}$ is compact in such a topology (together with other assumptions) will ensure us that it possesses least and greatest elements (with respect to the partial order $\preceq^\text{part}_{\tilde{f}}$).

\iffalse
We now need to introduce the concept of topology compatible with partial order $\preceq^\text{part}_{\tilde{f}}$. %\citep{wolk1958order}. 
%It will allow us to properly define an Imprecise Markov Semigroup even when only a partial order can be defined.
%Definitions \ref{directed}, \ref{dedekind}, and \ref{further-st-concepts} are needed to introduce the concept of $\preceq^\text{part}_{\tilde{f}}$-compatible topology. 
Requiring that $\mathbf{Q}$ is compact in such a topology (together with another assumption) will ensure us that it possesses least and greatest elements (with respect to the partial order $\preceq^\text{part}_{\tilde{f}}$). 
\fi 

\begin{definition}[$\preceq^\text{part}_{\tilde{f}}$-Compatible Topology \citep{wolk1958order}]\label{order-comp-top}
    A topology $\tau_{\preceq^\text{part}_{\tilde{f}}}$ on $\mathbf{Q}_{\tilde f}$ is $\preceq^\text{part}_{\tilde{f}}$-compatible if
    \begin{itemize}
        \item every $\tau_{\preceq^\text{part}_{\tilde{f}}}$-closed set is also Dedekind-closed
        \item every set of the form $\{[\mathbf{P}]\in\mathbf{Q}_{\tilde f} : [\mathbf{P^\prime}] \preceq^\text{part}_{\tilde{f}} [\mathbf{P}] \preceq^\text{part}_{\tilde{f}} [\mathbf{P^{\prime\prime}}] \}$ is $\tau_{\preceq^\text{part}_{\tilde{f}}}$-closed.
        %, where $a$ and $b$ belong to a generic index set $\mathcal{I}$.
    \end{itemize}
\end{definition}

The next remark explores the connections between the (generic) partial-order-compatible and the interval topologies.

\begin{remark}[Interval Topology vs $\preceq$-Compatible Topologies]
Let $(X,\preceq)$ be a partially ordered set. 
\citet{wolk1958order} considers two topologies naturally associated with the order.

\noindent\textbf{(1) Interval topology.}
The \emph{interval topology} $\mathcal T_I$ is the topology for which the family of order intervals
\[
[a,b]\coloneqq\{x\in X: a\preceq x\preceq b\}, \qquad a,b\in X,
\]
forms a \emph{closed subbasis}. Equivalently, $\mathcal T_I$ is the coarsest topology on $X$
for which every interval $[a,b]$ is closed.

\noindent\textbf{(2) $\preceq$-compatible topology.}
A topology $\mathcal T$ on $X$ is said to be \emph{$\preceq$-compatible} if:
(i) every $\mathcal T$-closed set is \emph{Dedekind-closed}, and
(ii) every interval $[a,b]$ is $\mathcal T$-closed.

\noindent\textbf{Relationship.}
Condition (ii) implies that any $\preceq$-compatible topology must contain the interval topology,
\[
\mathcal T_I \subseteq \mathcal T.
\]
Moreover, if $\mathcal T_D$ denotes the topology whose closed sets are exactly the Dedekind-closed
subsets of $X$, then condition (i) yields the upper bound
\[
\mathcal T \subseteq \mathcal T_D.
\]
Hence, $\preceq$-compatible topologies are precisely those lying between $\mathcal T_I$ and $\mathcal T_D$,
\[
\mathcal T_I \subseteq \mathcal T \subseteq \mathcal T_D.
\]
In particular, $\mathcal T_I$ is the \emph{minimal} $\preceq$-compatible topology requirement (it enforces
closed intervals only), whereas $\mathcal T_D$ is the \emph{maximal} one (it enforces all Dedekind-closed
sets to be closed). In special situations these two coincide (e.g. under finite $\preceq$-width, similarly to Lemma \ref{uniqueness}), yielding uniqueness of the $\preceq$-compatible topology.
\end{remark}

The following lemma summarizes some useful topological (and order-theoretic) properties of $\mathbf{Q}$.

% \paragraph{Preorder vs. partial order.}
% For a fixed bounded functional $\tilde f$, the relation
% $\mathbf P \preceq^\mathrm{part}_{\tilde f} \mathbf P' \iff P_t\tilde f \le P_t'\tilde f$ for all $t\ge0$
% is always reflexive and transitive, hence a preorder on $\mathbf Q$.
% It need not be antisymmetric, since distinct semigroups may coincide on $\tilde f$.
% We therefore introduce the equivalence relation
% \[
% \mathbf P \sim_{\tilde f} \mathbf P' \quad \Longleftrightarrow \quad
% P_t\tilde f = P_t'\tilde f   \text{for all } t\ge0,
% \]
% and work on the quotient set $\mathbf Q_{\tilde f}\coloneqq \mathbf Q/ \sim_{\tilde f}$.
% The preorder $\preceq^\mathrm{part}_{\tilde f}$ induces a genuine partial order on $\mathbf Q_{\tilde f}$ by
% \[
% [\mathbf P] \preceq^\mathrm{part}_{\tilde f} [\mathbf P'] \quad \Longleftrightarrow \quad
% \mathbf P \preceq^\mathrm{part}_{\tilde f} \mathbf P',
% \]
% which is well-defined and antisymmetric by construction.

\begin{lemma}[Topological Properties of $\mathbf{Q}_{\tilde f}$]\label{uniqueness}
    If $\mathbf{Q}_{\tilde f}$ has finite $\preceq^\text{part}_{\tilde{f}}$-width, then  $\preceq^\text{part}_{\tilde{f}}$-compatible topology $\tau_{\preceq^\text{part}_{\tilde{f}}}$ is unique. If in addition (a) $\mathbf Q_{\tilde f}$ is $\tau_{\preceq^\text{part}_{\tilde f}}$-compact, (b)
$\preceq^\text{part}_{\tilde f}$ has closed graph, that is, the set
\[
\mathrm{Graph}(\preceq^\text{part}_{\tilde f})
\coloneqq \{([\mathbf P],[\mathbf P^\prime])\in\mathbf Q_{\tilde f}\times\mathbf Q_{\tilde f}: [\mathbf P]\preceq^\text{part}_{\tilde f}[\mathbf P']\}
\]
is closed in the product topology $\tau_{\preceq^\text{part}_{\tilde f}}\times\tau_{\preceq^\text{part}_{\tilde f}}$, and (c)  $\mathbf Q_{\tilde f}$ is both up-  and down-  directed, then $\mathbf Q_{\tilde f}$ admits $\preceq^\text{part}_{\tilde f}$-least and $\preceq^\text{part}_{\tilde f}$-greatest elements. We denote them by $\underline{[\mathbf{P}]}_{\tilde{f}}$ and $\overline{[\mathbf{P}]}_{\tilde{f}}$, respectively.
%    If in addition $\mathbf{Q}$ is $\tau_{\preceq^\text{part}_{\tilde{f}}}$-compact, then it possesses least and greatest elements (with respect to the partial order $\preceq^\text{part}_{\tilde{f}}$). 
\end{lemma}

%The concepts of Dedekind-closed sets and finite $\preceq^\text{part}_{\tilde{f}}$-width are brushed off in Section 3 of the \href{https://www.dropbox.com/scl/fi/kuia82psb8kkdrz7oz5zh/IMSG_AMC_Suppl.pdf?rlkey=tgqip6vwdmludfmcfx2fku391&st=eu3vdqah&dl=0}{Supplementary Material}. 

\begin{proof}
%[Proof of Lemma \ref{uniqueness}]
    The first part of the lemma is a consequence of \citet[Theorem, page 528]{wolk1958order}, which also shows that $(\mathbf{Q}_{\tilde f},\tau_{\preceq^\text{part}_{\tilde{f}}})$ is Hausdorff. 
    
    For the second claim, assume that $\mathbf Q_{\tilde f}$ is $\tau_{\preceq^\text{part}_{\tilde f}}$-compact and that
$\preceq^\text{part}_{\tilde f}$ has closed graph. 
%(equivalently, all principal lower/upper sets are closed).
Consider the family
\[
\mathcal C \coloneqq \{\downarrow [\mathbf P] : [\mathbf P]\in\mathbf Q_{\tilde f}\},
\qquad
\downarrow [\mathbf P] \coloneqq \{[\mathbf R]\in\mathbf Q_{\tilde f}:[\mathbf R]\preceq^\text{part}_{\tilde f}[\mathbf P]\}.
\]
Each $\downarrow [\mathbf P]$ is nonempty and closed: Indeed,
\[
\downarrow [\mathbf P]=\{[\mathbf R]\in\mathbf Q_{\tilde f}:([\mathbf R],[\mathbf P])\in \mathrm{Graph}(\preceq^\text{part}_{\tilde f})\},
\]
so it is the preimage of the closed set $\mathrm{Graph}(\preceq^\text{part}_{\tilde f})$ under the continuous map
$[\mathbf R]\mapsto([\mathbf R],[\mathbf P])$.
%Each $\downarrow \mathbf P$ is nonempty and closed. 
% Order $\mathcal C$ by reverse inclusion.
% Any chain in this order is a nested family of nonempty closed sets, hence has nonempty intersection by compactness.
% By Zorn's lemma, $\mathcal C$ has a minimal element $\downarrow \mathbf P_\ast$ (for reverse inclusion).
Order $\mathcal C$ by reverse inclusion.
Let $\mathcal K\subset\mathcal C$ be a chain for this order, i.e. a nested family of nonempty closed sets.
By compactness, $\bigcap_{C\in\mathcal K} C\neq\emptyset$; Pick $[\mathbf P]\in\bigcap_{C\in\mathcal K} C$.
Fix $C\in\mathcal K$. Since $C=\downarrow [\mathbf Q_0]$ for some $[\mathbf Q_0]\in\mathbf Q_{\tilde f}$ and $[\mathbf P]\in C$, we have $[\mathbf P]\preceq^\text{part}_{\tilde f}[\mathbf Q_0]$.
If $[\mathbf R]\preceq^\text{part}_{\tilde f}[\mathbf P]$, then by transitivity $[\mathbf R]\preceq^\text{part}_{\tilde f}[\mathbf Q_0]$, hence $[\mathbf R]\in C$.
Thus $\downarrow [\mathbf P]\subseteq C$.
Hence $\downarrow [\mathbf P]$ is an upper bound of $\mathcal K$
in the reverse-inclusion order. Therefore, by Zorn's lemma, $\mathcal C$ has a maximal element $\downarrow [\mathbf P_\ast]$
(with respect to reverse inclusion).
We claim that $[\mathbf P_\ast]$ is $\preceq^\text{part}_{\tilde f}$-minimal in $\mathbf Q_{\tilde f}$.
Indeed, if $[\mathbf R]\preceq^\text{part}_{\tilde f}[\mathbf P_\ast]$, then $\downarrow [\mathbf R]\subseteq \downarrow [\mathbf P_\ast]$.
By maximality of $\downarrow [\mathbf P_\ast]$ in $\mathcal C$ (reverse inclusion), we must have
$\downarrow [\mathbf R]=\downarrow [\mathbf P_\ast]$, which implies $[\mathbf P_\ast]\preceq^\text{part}_{\tilde f}[\mathbf R]$.
By antisymmetry of $\preceq^\text{part}_{\tilde f}$ on $\mathbf Q_{\tilde f}$, $[\mathbf R]=[\mathbf P_\ast]$. Hence $[\mathbf P_\ast]$ is minimal.
The existence of a maximal element follows analogously by applying the same argument to the closed principal upper sets
$\uparrow [\mathbf P] \coloneqq \{[\mathbf R]\in\mathbf Q_{\tilde f}:[\mathbf P]\preceq^\text{part}_{\tilde f}[\mathbf R]\}$. 
Finally, assume $\mathbf Q_{\tilde f}$ is down-directed. Let $[\mathbf P_\ast]$ be a $\preceq^\text{part}_{\tilde f}$-minimal element.
For any $[\mathbf P]\in\mathbf Q_{\tilde f}$, by down-directedness there exists $[\mathbf R]\in\mathbf Q_{\tilde f}$ with
$[\mathbf R]\preceq^\text{part}_{\tilde f}[\mathbf P_\ast]$ and $[\mathbf R]\preceq^\text{part}_{\tilde f}[\mathbf P]$.
By minimality of $[\mathbf P_\ast]$ we must have $[\mathbf R]=[\mathbf P_\ast]$, hence
$[\mathbf P_\ast]\preceq^\text{part}_{\tilde f}[\mathbf P]$. Thus $[\mathbf P_\ast]$ is the least element.
The argument for a greatest element is analogous using up-directedness.
\end{proof}
% \begin{remark}
% From this point onwards, whenever we invoke order-theoretic or topological notions associated with $\preceq_{\tilde f}$, we work on the quotient poset $\mathbf Q_{\tilde f}=\mathbf Q/ \sim_{\tilde f}$ endowed with the induced antisymmetric order.
% To simplify the notation, we write $\mathbf Q$ in place of $\mathbf Q_{\tilde f}$ and write $\mathbf P$ for an equivalence class $[\mathbf P]$.
% Whenever an actual Markov semigroup (operator) is needed, we explicitly fix a representative in the corresponding equivalence class.
% \end{remark}
%The fact that the minimal and maximal elements are unique, and thus least and greatest, respectively, comes from the up/down-directed assumption.

    %The second part is an immediate consequence of requiring that $\mathbf{Q}$ is $\tau_{\preceq^\text{part}_{\tilde{f}}}$-compact. Indeed, from Definition \ref{order-comp-top}, this entails that $\mathbf{Q}$ possesses least and greatest elements.

%Notice that the assumption in Lemma \ref{uniqueness} that $\mathbf{Q}$ has finite $\preceq^\text{part}_{\tilde{f}}$-width is needed because  

Let us add a discussion on why the conditions in Lemma~\ref{uniqueness} ensure that the greatest and least elements for $\mathbf Q_{\tilde f}$ exist.
% Compactness with respect to the topology $\tau_{\preceq^\text{part}_{\tilde f}}$ can be read as a ``no escape'' condition:
% any family of $\tau_{\preceq^\text{part}_{\tilde f}}$-closed subsets of $\mathbf Q$ with the finite intersection property has nonempty intersection.
% In particular, once one identifies natural $\tau_{\preceq^\text{part}_{\tilde f}}$-closed sets encoding order constraints
% (e.g. principal lower/upper sets), compactness guarantees that these constraints can be satisfied by at least one element of $\mathbf Q$,
% rather than being achieved only ``at infinity''.
Compactness with respect to the topology $\tau_{\preceq^\text{part}_{\tilde f}}$ can be read as a ``no escape'' condition:
Whenever we approximate an extremal element by moving along a chain,
%\footnote{Recall that a chain in $\mathbf Q_{\tilde f}$ is a subset of $\mathbf Q$ that is totally ordered with respect to $\preceq^\text{part}_{\tilde f}$.} 
the corresponding limit points -- when they exist -- remain inside $\mathbf Q_{\tilde f}$.
In other words, $\tau_{\preceq^\text{part}_{\tilde f}}$-compactness ensures that $\mathbf Q_{\tilde f}$ does not ``escape to infinity''
in a topological sense: It prevents extremal candidates from slipping out of the model class.

Finite $\preceq^\text{part}_{\tilde f}$-width restricts the degree of ``spread'' in the poset $\mathbf Q_{\tilde f}$,
preventing it from having arbitrarily large levels of incomparability. In other words, it ensures that the ``pathological elements''
of $\mathbf Q_{\tilde f}$ -- i.e. the incomparable ones -- are controllably few. This helps bridge the gap with the totally ordered case
$\preceq^\text{tot}_{\tilde f}$, where no incomparable elements exist and extremal elements are conceptually straightforward.

The closed-graph condition for $\preceq^\text{part}_{\tilde f}$ (on the quotient poset $\mathbf Q_{\tilde f}$) plays a complementary topological role.
It says that comparability is stable under limits: If $[\mathbf P_n]\preceq^\text{part}_{\tilde f}[\mathbf P_n']$ for all $n$
and $[\mathbf P_n]\to[\mathbf P]$, $[\mathbf P_n']\to[\mathbf P']$ in $\tau_{\preceq^\text{part}_{\tilde f}}$, then necessarily
$[\mathbf P]\preceq^\text{part}_{\tilde f}[\mathbf P']$. 
%\mengqi{Does each slice $\downarrow[\mathbf P]$, $\uparrow[\mathbf P]$ being closed imply that the graph is closed? I think this might be false for a general preorder, so instead of ``Equivalently'' we should say something like ``In particular''. I included a counterexample in $\mathbb{R}^2$, as thinking about counterexamples on Markov semigroup equivalent classes melted my brain hahah. Define the perorder $x \leq y$ if $y \geq f(x)$ where $f(x)=\frac{1}{x}$ (for $x>0$ ) and $f(0)=0$, the slices $\uparrow x$ are closed rays $[f(x), \infty)$, but the discontinuity at zero prevents the graph from being closed in $X \times X$.} 
In particular, each principal lower set
$\downarrow[\mathbf P]\coloneqq\{[\mathbf R]\in\mathbf Q_{\tilde f}:[\mathbf R]\preceq^\text{part}_{\tilde f}[\mathbf P]\}$ and principal upper set
$\uparrow[\mathbf P]\coloneqq\{[\mathbf R]\in\mathbf Q_{\tilde f}:[\mathbf P]\preceq^\text{part}_{\tilde f}[\mathbf R]\}$ is $\tau_{\preceq^\text{part}_{\tilde f}}$-closed.
This prevents the order relation from ``breaking'' at the boundary of $\mathbf Q_{\tilde f}$ and is what allows compactness
arguments (via intersections of closed sets) to produce genuine extremal elements \emph{inside} $\mathbf Q_{\tilde f}$.

Finally, the up-directed and down-directed assumptions ensure that these extremal elements are not merely maximal/minimal
in the partial order, but are in fact greatest/least. Down-directedness means that any two models admit a common lower bound in $\mathbf Q_{\tilde f}$;
Thus, once a $\preceq^\text{part}_{\tilde f}$-minimal element $[\underline{\mathbf P}]_{\tilde f}$ exists, it must lie below every other element
(otherwise one could find a strict lower bound for it), hence it is the \emph{least} element. The analogous statement holds for the greatest
element under up-directedness. In short, compactness and closed graph yield existence of minimal/maximal elements, while two-sided directedness
upgrades them to least/greatest, which is precisely what we need to identify the extremal elements within $\mathbf Q_{\tilde f}$.

We now give a simple example of a collection $\mathbf{Q}$ and a test function $\tilde f$ that satisfy \eqref{imsg-first-property} and the hypotheses of Lemma \ref{uniqueness}, without the additional complications arising from equivalence classes.
%has finite $\preceq^\text{part}_{\tilde{f}}$-width. 

\begin{example}[A Compact Family with Closed Order and Least/Greatest Elements]\label{ex:compact-chain}
Let $E=\{0,1\}$ with the discrete $\sigma$-algebra $\mathcal F=2^E$, and fix the test function
$\tilde f:E\to\mathbb R$ given by $\tilde f(x)=x$.
For each parameter $a\in[0,1]$, let $\mu_a$ be the Bernoulli$(a)$ law on $E$, i.e.
\[
\mu_a(\{1\})=a,\qquad \mu_a(\{0\})=1-a.
\]
Define the Markov operator $K_a$ by
\[
(K_a g)(x)\coloneqq \int_E g(y) \mu_a(\mathrm dy),\qquad g\in B(E),
\]
so that $K_a g$ is constant in $x$. Notice that $K_a$ is idempotent, $K_a^2=K_a$.

For $t\ge0$, define
\[
P_t^{a}\coloneqq e^{-t}I+(1-e^{-t})K_a.
\]
Then $\mathbf P^{a}=(P_t^{a})_{t\ge0}$ is a Markov semigroup. Indeed, using $K_a^2=K_a$ and $IK_a=K_aI=K_a$,
\[
P_t^{a}P_s^{a}
=\bigl(e^{-t}I+(1-e^{-t})K_a\bigr)\bigl(e^{-s}I+(1-e^{-s})K_a\bigr)
=e^{-(t+s)}I+\bigl(1-e^{-(t+s)}\bigr)K_a
=P_{t+s}^{a}.
\]

Let
\[
\mathbf Q\coloneqq \{\mathbf P^{a}: a\in[0,1]\}.
\]
With respect to the order $\preceq^{\mathrm{part}}_{\tilde f}$ (i.e. $\mathbf P\preceq^{\mathrm{part}}_{\tilde f}\mathbf P^\prime
\iff P_t\tilde f\le P_t^\prime\tilde f$ for all $t\ge0$), we have
\[
P_t^{a}\tilde f(x)
=e^{-t}\tilde f(x)+(1-e^{-t})\mu_a(\tilde f)
=e^{-t}x+(1-e^{-t})a,
\]
and therefore
\[
\mathbf P^{a}\preceq^{\mathrm{part}}_{\tilde f}\mathbf P^{b}\quad \Longleftrightarrow\quad a\le b.
\]
Hence $\mathbf Q$ is a chain (so it has finite width $1$).

Endow $\mathbf Q$ with the topology induced by the identification $a\mapsto \mathbf P^{a}$.
Then $\mathbf Q$ is compact and Hausdorff (homeomorphic to $[0,1]$), and the order graph is closed since it corresponds to
\[
\{(a,b)\in[0,1]^2: a\le b\},
\]
which is closed in $[0,1]^2$. In particular, every order interval
$\{\mathbf P\in\mathbf Q: \mathbf P^{a^\prime}\preceq^{\mathrm{part}}_{\tilde f}\mathbf P\preceq^{\mathrm{part}}_{\tilde f}\mathbf P^{a^{\prime\prime}}\}$
is closed.

Finally, $\mathbf Q$ admits a least and a greatest element:
\[
\underline{\mathbf P}_{\tilde f}=\mathbf P^{0},
\qquad
\overline{\mathbf P}_{\tilde f}=\mathbf P^{1}.
\]
\end{example}

We are now ready for the full definition of a compact Imprecise Markov Semigroup.

\begin{definition}[Compact Imprecise Markov Semigroup]\label{cpct-imsg-def}
    Let $\mathbf{Q}$ be an IMSG, that is, a subset of the space $ ({B(E)}^{\mathbb{R}_+})^{B(E)}$ that satisfies \eqref{imsg-first-property}. Fix any $\tilde{f}\in B(E)$, and recall the quotient poset $\mathbf Q_{\tilde f}=\mathbf Q/ \sim_{\tilde f}$ introduced above. If the induced order $\preceq^\text{tot}_{\tilde{f}}$ on $\mathbf Q_{\tilde f}$ is total, then we say that $\mathbf{Q}$ is a {\em compact  Imprecise Markov Semigroup} if $\mathbf Q_{\tilde f}$ is $\tau_{\preceq^\text{tot}_{\tilde{f}}}$-compact. 
    
    If instead the induced order $\preceq^\text{part}_{\tilde{f}}$ on $\mathbf Q_{\tilde f}$ is only partial, then we say that $\mathbf{Q}$ is a {\em compact Imprecise Markov Semigroup} if $\mathbf Q_{\tilde f}$ has finite $\preceq^\text{part}_{\tilde{f}}$-width, it is $\tau_{\preceq^\text{part}_{\tilde{f}}}$-compact, it is both up- and down-directed, and 
$\preceq^\text{part}_{\tilde f}$ has closed graph.
    
    In both cases, we denote by $\underline{[\mathbf{P}
    ]}_{\tilde{f}}$ and $\overline{[\mathbf{P}]}_{\tilde{f}}$ the $\preceq_{\tilde f}$-least and $\preceq_{\tilde f}$-greatest elements of $\mathbf Q_{\tilde f}$, respectively.
\end{definition}

% is an Imprecise Markov Semigroup if it has finite $\preceq_{\tilde{f}}$-width and is $\tau_{\preceq_{\tilde{f}}}$-compact.

%We denote by $\underline{\mathbf{P}}$ and $\overline{\mathbf{P}}$ the least and greatest elements of $\mathbf{Q}$ (with respect to the order $\preceq_{\tilde{f}}$, that is, its minimizer and maximizer), respectively. We know that they exist and belong to $\mathbf{Q}$ by Lemma \ref{uniqueness}, which also ensures us that $\tau_{\preceq_{\tilde{f}}}$ is unique. 

%In the remainder of the paper, when we write an imprecise Markov semigroup $\mathbf{Q}$, we refer to a collection of Markov semigroups as in Definition \ref{imsg-def} that are associated with Markov processes. A simple example of an imprecise Markov semigroup is a finite collection of (precise) Markov semigroups, $\mathbf{Q}=\{\mathbf{P}_i : \mathbf{P}_i \text{ is a Markov semigroup} \}_{i=1}^n$, $n\in\mathbb{N}$.
%For notational simplicity, in the remainder of this section and of Sections \ref{riemann} and \ref{idmt-section}, we write $\underline{\mathbf{P}}_{\tilde{f}} \equiv \underline{\mathbf{P}}$ and $\overline{\mathbf{P}}_{\tilde{f}} \equiv \overline{\mathbf{P}}$, with the understanding that the dependence on the chosen function $\tilde{f}$ is implied. 

\begin{remark}[Notation: Quotient Poset, Representatives, and Shorthands]\label{quotient-notation}
Fix $\tilde f\in B(E)$. Whenever we invoke order-theoretic or topological notions associated with $\preceq_{\tilde f}$, we work on the quotient poset $\mathbf Q_{\tilde f}=\mathbf Q/ \sim_{\tilde f}$ endowed with the induced antisymmetric order, and we use bracket notation to emphasise equivalence classes. In particular, $\underline{[\mathbf{P}]}_{\tilde{f}}$ and $\overline{[\mathbf{P}]}_{\tilde{f}}$ denote the $\preceq_{\tilde f}$-least and $\preceq_{\tilde f}$-greatest \emph{equivalence classes} in $\mathbf Q_{\tilde f}$. 

When we drop the brackets and write $\underline{\mathbf{P}}_{\tilde{f}}$ or $\overline{\mathbf{P}}_{\tilde{f}}$, we mean that we have fixed (once and for all in the relevant argument) a representative Markov semigroup in the corresponding class, i.e. $\underline{\mathbf{P}}_{\tilde{f}}\in\underline{[\mathbf{P}]}_{\tilde{f}}$ and $\overline{\mathbf{P}}_{\tilde{f}}\in\overline{[\mathbf{P}]}_{\tilde{f}}$. Any operator-level statement involving $\underline{\mathbf{P}}_{\tilde{f}}$ or $\overline{\mathbf{P}}_{\tilde{f}}$ (e.g. generators, invariant measures, curvature conditions) is to be understood as a statement about the chosen representatives.
For notational simplicity, in the remainder of this section, and in Sections \ref{riemann} and \ref{idmt-section}, we adopt the following abuse of notation,
%: $\mathbf Q_{\tilde f}\equiv \mathbf Q$, 
$\underline{\mathbf{P}}_{\tilde{f}} \equiv \underline{\mathbf{P}}$ and $\overline{\mathbf{P}}_{\tilde{f}} \equiv \overline{\mathbf{P}}$, with the understanding that the dependence on the chosen function $\tilde{f}$ is implicit.
\end{remark}
% \mengqi{Now that I read it with fresh eyes, I think making $\mathbf Q_{\tilde f}\equiv \mathbf Q$ might be a bit confusing to the reader, as the former is a quotient poset and its elements are classes, but the latter is our imprecise semigroup. I think it's better to keep them separate, and we have
% \begin{align*}
% \underline{[\mathbf{P}]}_{\tilde{f}},\overline{[\mathbf{P}]}_{\tilde{f}} \in \mathbf Q_{\tilde f} \quad &\text{as classes,}\\
% \underline{\mathbf{P}}, \overline{\mathbf{P}} \in \mathbf Q \quad &\text{as (representative) semigroups.}
% \end{align*}
% This might be clearer and is still compatible with the rest of the paper
% }

At this point, the reader may ask themselves why we went to such great lengths to ensure that $\mathbf{Q}$ possesses the least and greatest elements $\underline{\mathbf{P}}$ and $\overline{\mathbf{P}}$, respectively.
%of $\mathbf{Q}$ are well-defined. 
Informally, the reason is that studying their limiting behavior (that is, studying the limits $\lim_{t \rightarrow\infty} \underline{P}_t \tilde{f}$ and $\lim_{t \rightarrow\infty} \overline{P}_t \tilde{f}$) will inform us on the limiting behavior of all the elements of $\mathbf{Q}$ evaluated at $\tilde{f}$. This intuition is made formal in Corollaries \ref{limiting-cor} and \ref{limiting-cor2}.
%(by an argument akin to a squeeze theorem). 
In addition, since $\underline{\mathbf{P}}$ and $\overline{\mathbf{P}}$ are (precise) Markov semigroups, we can use techniques developed in the Markov diffusion operators literature \citep{bakry} to carry out such a study.

%  Second, although not explicitly included in the statement of Definition \ref{cpct-imsg-def}, condition \eqref{imsg-first-property} implies that no switches between elements of $\mathbf{Q}$ are possible. To be more precise, given $\mathbf{P},\mathbf{P^\prime} \in \mathbf{Q}$, then $(P^\prime_t)$ cannot act on $(P_s \tilde{f})$. In other words, there does not exist $\mathbf{P^{\prime\prime}} \in \mathbf{Q}$ such that $P^{\prime\prime}_{t+s}=P^\prime_t \circ P_s$, for some $t,s\in \mathbb{R}_+$. This is because if switches were allowed, then the Markov processes associated with the Markov semigroups in $\mathbf{Q}$ could be imprecise. That is, the kernel $p_t^{\prime\prime}(x,\cdot)$, $x \in E$ and $t \geq 0$, could be a Choquet capacity \citep{choquet}. We  do not consider this case in Sections \ref{riemann} and \ref{idmt-section}, where we let the ambiguity faced by the scholar be modeled by a collection of {\em precise} Markov semigroups. Instead, we allow for switches in Section \ref{subsec:wass-sublinear}, where we 
% % consider genuinely superlinear and sublinear semigroups, letting thus 
%  let go of the compactness assumption.
%, rather than one imprecise ones, whose associated probability kernel $p_t(x,\cdot)$, $x \in E$, may be imprecise -- i.e. may be a Choquet capacity \citep{choquet}. 

Notice also that the framework of Definition~\ref{cpct-imsg-def} is deliberately \emph{non-switching}. 
That is, we model ambiguity by a \emph{set of precise Markov semigroups} \(\mathbf Q\), but we do not allow the decision-maker to concatenate two different models over successive time intervals. Concretely, given \(\mathbf P,\mathbf P^\prime\in\mathbf Q\), we do \emph{not} assume that there exists \(\mathbf P^{\prime\prime}\in\mathbf Q\) such that
\[
P^{\prime\prime}_{t+s}=P^\prime_t\circ P_s,\quad \text{ for some }t,s\ge0,
\]
nor do we interpret \(P^\prime_t\) as acting on the random output \(P_s\tilde f\) produced under a different model \(\mathbf P\).
Allowing such concatenations corresponds to admitting \emph{switching} controls, and typically leads to a time-consistent upper (or lower) expectation that is no longer representable by a single precise semigroup: The resulting one-step kernels need not be dominated by a fixed probability kernel and may instead be described by nonlinear transition operators -- e.g. capacities in the sense of \citet{choquet}. 

In Sections~\ref{riemann} and \ref{idmt-section} we therefore restrict attention to ambiguity represented by a collection of \emph{precise} semigroups without switching. By contrast, in Section~\ref{subsec:wass-sublinear} we explicitly \emph{allow} switching and construct the associated (Nisio) nonlinear semigroups, which lets us dispense with compactness while retaining robust long-term behavior.

% It is also worth mentioning that in Sections \ref{riemann} and \ref{idmt-section} we focus our attention on orders (total and partial) associated with a particular function $\tilde{f}$ because, 
%there exists at least one case in which
% given an order of the type $\mathbf{P} \preceq \mathbf{P^\prime} \iff P_t f \leq P^\prime_t f $, for all $t \geq 0$ and {\em all} $f \in B(E)$, there exists at least one set $\mathbf Q$ that satisfies \eqref{imsg-first-property}, but which does not possess a least element $\underline{\mathbf{P}}$, according to such an order \citep[Example 6.2]{krak}.
%is such that $\underline{\mathbf{P}}$ does not belong to $\mathbf{Q}$ 

Furthermore, it is worth mentioning that in Sections \ref{riemann} and \ref{idmt-section} we focus our attention on
orders (total and partial) associated with a particular function $\tilde{f}$. Indeed, if one were to consider the
``global'' comparison
\[
\mathbf{P} \preceq \mathbf{P}' \quad \iff \quad P_t f \le P_t^\prime f, \text{ for all } t\ge0 \text{ and all } f\in B(E),
\]
then this relation is in fact \emph{trivial}: Applying it to $-f$ yields $P_t f \ge P_t^\prime f$, hence $P_t f = P_t^\prime f$ for all
$t\ge0$ and $f\in B(E)$. In other words, such an order collapses to equality and cannot distinguish distinct semigroups.
Therefore, to obtain a nontrivial comparison and meaningful extremal elements, one must restrict the function class,
which we do here by working with orders induced by a fixed $\tilde f$.
%(or, more generally, a prescribed family of test functionals). 
We note that even under a restricted order, there are families $\mathbf Q$ satisfying
\eqref{imsg-first-property} that still fail to possess a least element $\underline{\mathbf{P}}$ with respect to that order, see \citet[Example~6.2]{krak}.

As we can see, the choice of $\tilde{f}$ is then crucial, and it should be driven by the application of interest. For example, as we discussed in Section \ref{intro}, in the case of a convolutional autoencoder, we can pick $\tilde{f}(z)=e_1^\top \phi_\text{dec}(z)$ to study how the probability of the first label evolves over time. In quantitative finance, we could pick 
        \(
          \tilde f(z)=\min\{(K - z_i)_+,M\},
        \)
        $z\in\mathbb{R}_+^{d}$, where \((\cdot)_+=\max(\cdot,0)\), \(K\) is the strike, and \(M\) a
        truncation level. This is a truncated put‑option payoff on the
        \(i\)-th asset; The resulting lower and upper semigroups yield worst‑case option prices under model uncertainty. In reliability/survival analysis, a viable choice is
        \(
          \tilde f(z)=\text{Ind}_{\{z\le\theta\}},
        \)
        for a failure threshold \(\theta\), where $\text{Ind}_{\{\cdot\}}$ denotes the indicator function. The functional tracks the probability that the
        degradation index \(X_t\) remains in the ``healthy'' region, yielding
        robust upper and lower survival curves.

    Let us now link the least element 
    %$\overline{\mathbf{P}}$ and 
    $\underline{\mathbf{P}}$ with lower previsions \citep{decooman}, sometimes also referred to as lower expectations, a crucial concept -- perhaps the most important one -- in imprecise probability theory.\footnote{A similar argument holds for $\overline{\mathbf{P}}$ and upper previsions.} Given a generic set $\mathcal{P}$ of countably additive probability measures on $E$, the lower prevision for a (measurable bounded) functional $\tilde{f}$ on $E$ is given by $\underline{\mathbb{E}}(\tilde{f}) = \inf_{P\in\mathcal{P}} \mathbb{E}_P(\tilde{f})$, 
    %and similarly the upper prevision is given by $\overline{\mathbb{E}}(f) = \sup_{P\in\mathcal{P}} \mathbb{E}_P(f)$, 
    whenever the expectations exist. Alternatively, a superadditive 
    %and subadditive 
    Choquet capacity $\underline{P}$ can be considered,  
    %and $\overline{P}$, respectively, 
    and the lower 
    %and upper 
    prevision is computed according to the Choquet integral $\int_E \tilde{f} \text{d}\underline{P}$ 
    %and $\int_E f \text{d}\overline{P}$, respectively 
    \citep{choquet}. These two definitions only coincide if $\underline{P}$ is $2$-monotone \citep[Section 2.1.(ii)]{cerreia}. Conditional lower 
    %and upper 
    previsions are defined analogously \citep{BARTL,decooman}. In our imprecise Markov semigroup framework of Sections \ref{riemann} and \ref{idmt-section}, we have that for every $t\geq 0$, the lower conditional prevision $\underline{\mathbb{E}}(\tilde{f}(X_t) \mid X_0)$ is equal to $\underline{P}_t \tilde{f}$, which in turn is a linear conditional expectation. This is because $\underline{\mathbf{P}}$ is the least element of $\mathbf{Q}$, so $\underline{\mathbf{P}}\in \mathbf{Q}$, which by \eqref{imsg-first-property} implies that $\underline{\mathbf{P}}$ is a (precise) Markov semigroup associated with a (linear) Markov process.
    Working with compact Imprecise Markov Semigroups (IMSGs) lets us represent, within a single object, the agent's ambiguity about both (i) the long-run law (invariant measure) and (ii) the short-run dynamics (transition probabilities) of a Markov process.
In Section \ref{subsec:wass-sublinear}, instead, we do not require $\mathbf{Q}$ to be compact, so we are able to explicitly relate imprecise Markov semigroups to both the lower envelope $\underline{\mathbb E}(\tilde f)$ and the Choquet integral $\int_E \tilde f \text{d}\underline{P}$.

In the imprecise probability literature, ambiguity about invariant behavior is often encoded directly at the level of stationary objects, for instance through an invariant Choquet capacity  \citep{de_Cooman_Hermans_Quaeghebeur_2009}. Ambiguity about the dynamics is instead typically modeled by sets of transition probabilities (in discrete time) or sets of transition rate matrices (in continuous time), frequently assumed convex so that worst/best-case evaluations can be computed via linear-programming-type procedures \citep{krak,hitting}.

A Markov semigroup does not, by itself, specify an initial distribution: It encodes only the transition mechanism
\((P_t)_{t\ge0}\), while an initial law \(\nu\in\Delta_E\) must be supplied separately to determine the full process law; Here $\Delta_E$ denotes the space of countably additive probability measures on $E$.
In this paper, we represent ambiguity about the \emph{dynamics} by an imprecise Markov semigroup \(\mathbf Q\), i.e. a family
of precise semigroups \(\mathbf P=(P_t)_{t\ge0}\). Each \(\mathbf P\in\mathbf Q\) determines a candidate collection of transition
probabilities and (when it exists) an associated invariant measure \(\mu \equiv \mu_{\mathbf P}\). Hence \(\mathbf Q\) captures ambiguity
both about transition probabilities and about invariant/long-run behavior, by allowing these objects to vary across
\(\mathbf P\in\mathbf Q\).
If one also wishes to model ambiguity about the initial distribution, this can be done exogenously by specifying a set of
priors \(\mathsf N\subseteq\Delta_E\) and considering \(\{\nu P_t:\nu\in\mathsf N, \mathbf P\in\mathbf Q\}\); Our results
primarily focus on the robust long-term behavior induced by uncertainty in the semigroup itself.

As we shall see in Sections \ref{riemann}, \ref{idmt-section}, and \ref{subsec:wass-sublinear}, taking an (imprecise Markov) semigroup route allows us to explicitly account for the geometry of the state space $E$ when we study the ergodic behavior of $\overline{\mathbf{P}}$ and $\underline{\mathbf{P}}$ (a perspective that is new to the literature), and to work in continuous time (whereas the majority of the results in the literature pertain the discrete time case). The price we pay in Sections \ref{riemann} and \ref{idmt-section} for these gains is the need for compactness of the equivalence class space under our chosen preorder.
%the need for a compact set of semigroups of linear operators. \mengqi{Now our compactness is in terms of the order topology on $\mathbf{Q}_{\tilde f}$, so maybe instead of ``the need for a compact set of semigroups of linear operators'', it's more accurate to say something like ``the need for compactness of the equivalence class space under our chosen preorder''?}
%having finite $\preceq_{\tilde{f}}$-width. 
This is because, as a consequence of Lemma \ref{uniqueness}, $\overline{\mathbf{P}}, \underline{\mathbf{P}} \in \mathbf{Q}$, and so we can use techniques developed for (precise) Markov semigroups to study the ergodic behavior of $\overline{\mathbf{P}}$ and $\underline{\mathbf{P}}$, and in turn of all the elements of $\mathbf{Q}$. In Section \ref{subsec:wass-sublinear}, instead, we allow $\mathbf{Q}$ to be non-compact, thus making our approach closer to
%extend our approach involving the geometry of the state space $E$ 
the frameworks introduced e.g. in 
%the seminal works 
\citet{CRIENS2024104354, denk2, DENK, erreygers23a, feng-zhao,kuhn,nendel},\footnote{In those papers, the curvature of the state space $E$ is not considered by the authors when deriving their results.} where nonlinear Markov semigroups (and in particular Markov semigroups associated with nonlinear Markov processes) are studied. 

\section{Ergodicity in the Riemannian Manifold Case -- Compact $\mathbf{Q}$}\label{riemann}
%space where the functionals $f$ of interest are defined on,
In this section, we begin our inspection on how the geometry of the state space $E$ informs us around the ergodicity of an imprecise Markov semigroup $\mathbf{Q}$. Our journey starts by letting $E=\mathbb{R}^n$, or $E=M$, a complete (with respect to the Riemannian distance) Riemannian manifold \citep{panoramic}.\footnote{For a primer on Riemannian geometry, we refer the reader to \citet[Appendix C.3]{bakry}} 

Loosely, a complete Riemannian manifold $M$ can be thought of as a ``smooth'' 
%open (in the Euclidean topology) 
subset of $\mathbb{R}^n$, equipped with a complete metric that depends in some sense on the elements $x \in M$. A visual representation of a Riemannian manifold in $\mathbb{R}^3$ is given in Figure \ref{fig-riem}. In this context, the carré du champ and the iterated carré du champ operators associated with a Markov semigroup are easily computed (see Appendix \ref{riemann-manif}). Recall from Remark \ref{quotient-notation} that, in the following theorem, we have 
%$\mathbf{Q}\equiv \mathbf{Q}_{\tilde{f}}$ \mengqi{If we keep $\mathbf{Q}$ and $\mathbf{Q}_{\tilde{f}}$ separate then this is not necessary, and I don't think it affects anything below}, and that 
$\underline{\mathbf{P}} \equiv \underline{\mathbf{P}}_{\tilde{f}}$, a fixed chosen representative from $\underline{[\mathbf{P}]}_{\tilde{f}}$, and similarly for $\overline{\mathbf{P}}$.

%In a local system of coordinates, the Riemannian metric is given by a positive-definite symmetric matrix $G=G(x)=(g_{ij}(x))_{i,j\in\{1,2,3\}}$ such that, in Einstein notation, $|v|^2=g_{ij}v^iv^j$, where we omitted $x$ for notational clarity. The entries of matrix $G$ are smooth in $x$.}
%as the manifold is

%\newpage
%\mengqi{I think the results in this theorem still hold, but we need to clarify that the invariant measures are with respect to the chosen representative in the equivalent classes}
\begin{theorem}[Limiting Behavior of $\underline{\mathbf{P}}$ and $\overline{\mathbf{P}}$, $E$  Euclidean space or Riemannian manifold]\label{thm-main-1}
    Let $E$ be $\mathbb{R}^n$ or a smooth complete connected manifold. Let $\mathcal{A}_0$ be the class of smooth ($\mathcal{C}^\infty$) compactly supported functionals $f$ on $E$.\footnote{Recall that a smooth function on a closed and bounded set (and hence, in this framework, compact by Heine-Borel, or Hopf-Rinow for Riemannian manifold) is itself bounded.} Fix any $\tilde{f}\in B(E)$, and assume that we can define the partial order $\preceq_{\tilde{f}}^\text{part}$ or the total order $\preceq_{\tilde{f}}^\text{tot}$ that we introduced in Section \ref{prelim}. Let $\mathbf{Q}$ be a compact imprecise Markov semigroup. Denote by $\overline{\mathbf{P}}$ and $\underline{\mathbf{P}}$ the greatest and least elements of $\mathbf{Q}$,\footnote{With respect to $\preceq^\text{part}_{\tilde{f}}$ or $\preceq^\text{tot}_{\tilde{f}}$.} respectively, with invariant measures $\overline{\mu}$ and $\underline{\mu}$, and associated infinitesimal generators $\overline{L}$ and $\underline{L}$, respectively.
    %(precise) Markov semigroups, respectively, applied on $f$. That is, for all $t \geq 0$, $\overline{P}_t f=\sup_{p_t(x,\cdot) \in\mathcal{P}_t^x} \int_E f(y) p_t(x,\text{d}y)= \sup_{P_t \in \mathcal{Q}^x_t} P_tf$, and similarly for $\underline{\mathbf{P}}f$, with $\inf$ in place of $\sup$. 
    %Call $\overline{L}$ and $\underline{L}$ the infinitesimal generators associated with $\overline{\mathbf{P}}$ and $\underline{\mathbf{P}}$, respectively. 
    Suppose that the following three conditions hold,
    \begin{itemize}
        \item[(i)] $\underline{L}$ and $\overline{L}$ are elliptic diffusion operators, symmetric with respect to $\underline{\mu}$ and $\overline{\mu}$,\footnote{The symmetric assumption tell us that the law of the Markov processes associated with $\underline{\mathbf{P}}$ and $\overline{\mathbf{P}}$ are “reversible in time”.} respectively;
        \item[(ii)] $\underline{L}$ and $\overline{L}$ satisfy the Bakry-Émery curvature conditions $CD(\rho_{\underline{L}},\infty)$ and $CD(\rho_{\overline{L}},\infty)$, respectively, for some $\rho_{\underline{L}},\rho_{\overline{L}} \in \mathbb{R}$;
        \item[(iii)] The carré du champ algebra for both the carré du champ operators $\underline{\Gamma}$ and $\overline{\Gamma}$ of the Markov generators $\underline{L}$ and $\overline{L}$, respectively, is $\mathcal{A}_0$.
    \end{itemize}
    Then, we have that
    \begin{equation*}
        \forall f \in \mathbb{L}^2(\underline{\mu}), \quad  \lim_{t \rightarrow \infty} \underline{P}_t f = \mathbb{E}_{\underline{\mu}}\left[ f \right] =\int_E f \text{d}\underline{\mu}, \quad \underline{\mu}\text{-a.e.}
    \end{equation*}
    and 
    \begin{equation*}
    \forall f \in \mathbb{L}^2(\overline{\mu} ), \quad
        \lim_{t \rightarrow \infty} \overline{P}_t  f = \mathbb{E}_{\overline{\mu} }\left[ f \right] =\int_E f \text{d}\overline{\mu} , \quad \overline{\mu} \text{-a.e.}
    \end{equation*}
    in $\mathbb{L}^2(\underline{\mu} )$ and in $\mathbb{L}^2(\overline{\mu} )$, respectively. In addition, if the invariant measure for $\underline{\mathbf{P}} $ is equal to that of $\overline{\mathbf{P}} $, and if we denote it by $\mu $, then 
    %$\mu$ is equal  distribution of $X_0$ is known and equal to $\mu$, then
    \begin{equation}\label{ergodic-equiv-imp}
    \forall f \in \mathbb{L}^2(\mu ), \quad
        \lim_{t \rightarrow \infty} \overline{P}_t  f = \lim_{t \rightarrow \infty} \underline{P}_t  f = \mathbb{E}_{\mu }\left[ f \right] =\int_E f \text{d}\mu , \quad \mu \text{-a.e.}
    \end{equation}
    in $\mathbb{L}^2(\mu )$.
\end{theorem}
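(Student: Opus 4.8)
The plan is to reduce the statement to a classical ergodicity statement for two ordinary (precise) symmetric Markov diffusion semigroups, and then to invoke, essentially verbatim, the convergence-to-equilibrium theory of \citet{bakry}. The key structural observation is that, by Lemma \ref{uniqueness} together with Definition \ref{imsg-def} and property \eqref{imsg-first-property}, the greatest and least elements $\overline{\mathbf{P}}$ and $\underline{\mathbf{P}}$ are themselves genuine (precise) Markov semigroups associated with Markov processes; hence so are the objects to which we must apply classical results. It therefore suffices to prove the first displayed limit (for $\underline{\mathbf{P}}$); the second is obtained by replacing every underlined symbol with the corresponding overlined one, and the third, \eqref{ergodic-equiv-imp}, is then the immediate specialization in which $\underline{\mu}=\overline{\mu}=\mu$, so that both limits collapse to the same constant $\int_E f\,\mathrm{d}\mu$.

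Second, I would set up the spectral picture. By hypothesis (i), $\underline{L}$ is symmetric with respect to the invariant probability measure $\underline{\mu}$ (Definition \ref{symmetry}), so $-\underline{L}$ is a nonnegative self-adjoint operator on $\mathbb{L}^2(\underline{\mu})$ with associated Dirichlet form $\underline{\mathcal{E}}(f)=\int_E \underline{\Gamma}(f)\,\mathrm{d}\underline{\mu}$, and $(\underline{P}_t)_{t\geq 0}$ is the associated symmetric contraction semigroup. By the spectral theorem applied to $-\underline{L}$, one has $\underline{P}_t f \to \Pi f$ in $\mathbb{L}^2(\underline{\mu})$ as $t\to\infty$, where $\Pi$ is the orthogonal projection onto $\ker \underline{L}$ (this holds whether or not $0$ is an isolated point of the spectrum). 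Since $\underline{\mu}$ is a probability measure, $\underline{P}_t\mathbbm{1}=\mathbbm{1}$ gives $\mathbbm{1}\in\ker\underline{L}$ and $\|\mathbbm{1}\|_{\mathbb{L}^2(\underline{\mu})}=1$; hence, once we know $\ker\underline{L}=\mathbb{R}\mathbbm{1}$, we get $\Pi f=\langle f,\mathbbm{1}\rangle_{\mathbb{L}^2(\underline{\mu})}\mathbbm{1}=\bigl(\int_E f\,\mathrm{d}\underline{\mu}\bigr)\mathbbm{1}$, which is exactly the asserted limit (understood, as everywhere in the paper, as an equality of $\underline{\mu}$-equivalence classes, i.e.\ $\underline{\mu}$-a.e.).

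Third, I would establish triviality of the kernel, which is where hypotheses (ii) and (iii) enter. Condition (iii) fixes the carré du champ algebra to be $\mathcal{A}_0$, and condition (ii) gives $CD(\rho_{\underline{L}},\infty)$ on $\mathcal{A}_0$ (Definition \ref{b-e-curv}); combined with completeness of $E$, the essential self-adjointness criterion of \citet{bakry} shows that $(\underline{L},\mathcal{A}_0)$ is essentially self-adjoint, so $\mathcal{A}_0$ is a form core and the form domain is the $\underline{\mathcal{E}}_1$-closure of $\mathcal{A}_0$. Now if $\underline{L}f=0$, then $\underline{\mathcal{E}}(f)=\int_E\underline{\Gamma}(f)\,\mathrm{d}\underline{\mu}=0$, and since $\underline{\Gamma}\geq 0$ this forces $\underline{\Gamma}(f)=0$ $\underline{\mu}$-a.e. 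By ellipticity of $\underline{L}$ the cometric $(\underline{g}^{ij})$ is positive-definite, so $\underline{\Gamma}(f)=\sum_{i,j}\underline{g}^{ij}\partial_i f\,\partial_j f=0$ yields $\nabla f=0$; hence $f$ is locally constant, and since $E$ (being $\mathbb{R}^n$ or a connected Riemannian manifold) is connected, $f$ is constant. Thus $\ker\underline{L}=\mathbb{R}\mathbbm{1}$, and together with the previous step this proves the first display; the same argument applied to $\overline{\mathbf{P}}$ proves the second, and \eqref{ergodic-equiv-imp} follows by specialization.

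I expect the main obstacle to be making the kernel-triviality argument fully rigorous at the level of the form domain rather than of smooth functions. Because $\rho_{\underline{L}}$ is allowed to be nonpositive there need not be a spectral gap, so $0$ may fail to be an isolated point of the spectrum and no Poincaré inequality is available; the convergence therefore cannot be read off from an exponential-decay estimate and must be extracted purely from the spectral theorem plus simplicity of the eigenvalue $0$. Establishing that simplicity amounts to proving irreducibility of the Dirichlet form $\underline{\mathcal{E}}$, for which one must upgrade the heuristic ``$\underline{\Gamma}(f)=0\Rightarrow\nabla f=0\Rightarrow f$ constant'' so that it holds for every $f$ in the form domain --- using locality of $\underline{\mathcal{E}}$, ellipticity, connectedness of $E$, and the fact (guaranteed by essential self-adjointness) that $\mathcal{A}_0$ is a core. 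Once irreducibility is in hand, the remainder is routine.
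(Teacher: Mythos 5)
Your proposal is correct in substance and shares the paper's overall skeleton --- reduce to the two precise extreme semigroups $\underline{\mathbf{P}},\overline{\mathbf{P}}\in\mathbf{Q}$ via Lemma \ref{uniqueness}, show that the generator is ergodic in the sense that $\underline{L}f=0$ forces $f$ to be constant, and then conclude $\mathbb{L}^2$-convergence to the projection onto constants (your spectral-theorem step is precisely the content of the result the paper cites for that purpose, \citet[Proposition 3.1.13]{bakry}) --- but the middle step is carried out by a genuinely different route. The paper uses the curvature condition $CD(\rho,\infty)$ in a load-bearing way: it invokes the gradient bound $\overline{\Gamma}(\overline{P}_tf)\leq e^{-2t\rho_{\overline{L}}}\overline{P}_t(\overline{\Gamma}(f))$, deduces the local Poincar\'e inequality, lets $t\to\infty$ to obtain $\mathrm{Var}_{\overline{\mu}}(f)\leq\rho_{\overline{L}}^{-1}\overline{\mathcal{E}}(f)$, and then reads off constancy from $\overline{\mathcal{E}}(f)=0\Rightarrow\mathrm{Var}_{\overline{\mu}}(f)=0$; ellipticity enters only to make $\mathcal{A}_0$ dense in $\mathcal{D}(\overline{L})$ for the approximation argument. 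You instead bypass the Poincar\'e inequality entirely: from $\underline{\mathcal{E}}(f)=0$ and $\underline{\Gamma}\geq 0$ you get $\underline{\Gamma}(f)=0$ a.e., and then use positive-definiteness of the cometric (ellipticity) plus connectedness of $E$ to conclude $f$ is constant. Your route buys something real: the Poincar\'e constant $(1-e^{-2t\rho})/\rho$ degenerates as $t\to\infty$ when $\rho\leq 0$, so the paper's chain is only informative for $\rho>0$ (consistent with its own remark about requiring $\rho>0$ when the measures are not known to be finite), whereas your argument never needs a spectral gap and treats $\rho\in\mathbb{R}$ uniformly --- indeed hypothesis (ii) becomes essentially idle in your proof, entering only through your (slightly misattributed) appeal to essential self-adjointness, which in fact follows from ellipticity and completeness alone. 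What the paper's route buys in exchange is quantitative information (an explicit exponential rate when $\rho>0$), which the theorem does not claim. The technical obstacle you flag --- upgrading ``$\underline{\Gamma}(f)=0\Rightarrow f$ constant'' from $\mathcal{A}_0$ to the form domain using that $\mathcal{A}_0$ is a core --- is the same approximation step the paper disposes of in one sentence via density of $\mathcal{A}_0$ in $\mathcal{D}(\overline{L})$, so it is not a gap peculiar to your approach.
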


If $E$ is a complete Riemannian manifold, the Bakry-Émery curvature condition $CD(\rho_{\underline{L}},\infty)$ in Theorem \ref{thm-main-1}.(ii) is related to the notion of Ricci tensor at every point of $E$, and in turn to the Ricci curvature of the manifold $E$ (for more, we refer the reader to Appendix \ref{riemann-manif}).\footnote{A similar argument holds for the infinitesimal generator $\overline{L}$ associated with Markov semigroup $\overline{\mathbf{P}}$.} 
%in Theorem \ref{thm-main-1}.(ii) holds if and only if the Ricci tensor at every point $x\in M$ is bounded from below by $\rho_{\underline{L}}$ (see Section 1.1 of the \href{https://www.dropbox.com/scl/fi/kuia82psb8kkdrz7oz5zh/IMSG_AMC_Suppl.pdf?rlkey=tgqip6vwdmludfmcfx2fku391&st=eu3vdqah&dl=0}{Supplementary Material}). In turn, the iterated carré du champ operator $\underline{\Gamma}_2$ of $\underline{L}$ is positive if and only if the Ricci curvature of the manifold $E$ is positive. 
We see, then, how
%This is how
%makes more precise the idea that 
the geometry of the state space $E$ influences the ergodic behavior of an imprecise Markov semigroup.

\begin{figure}[h]%
\centering
\includegraphics[width=0.5\textwidth]{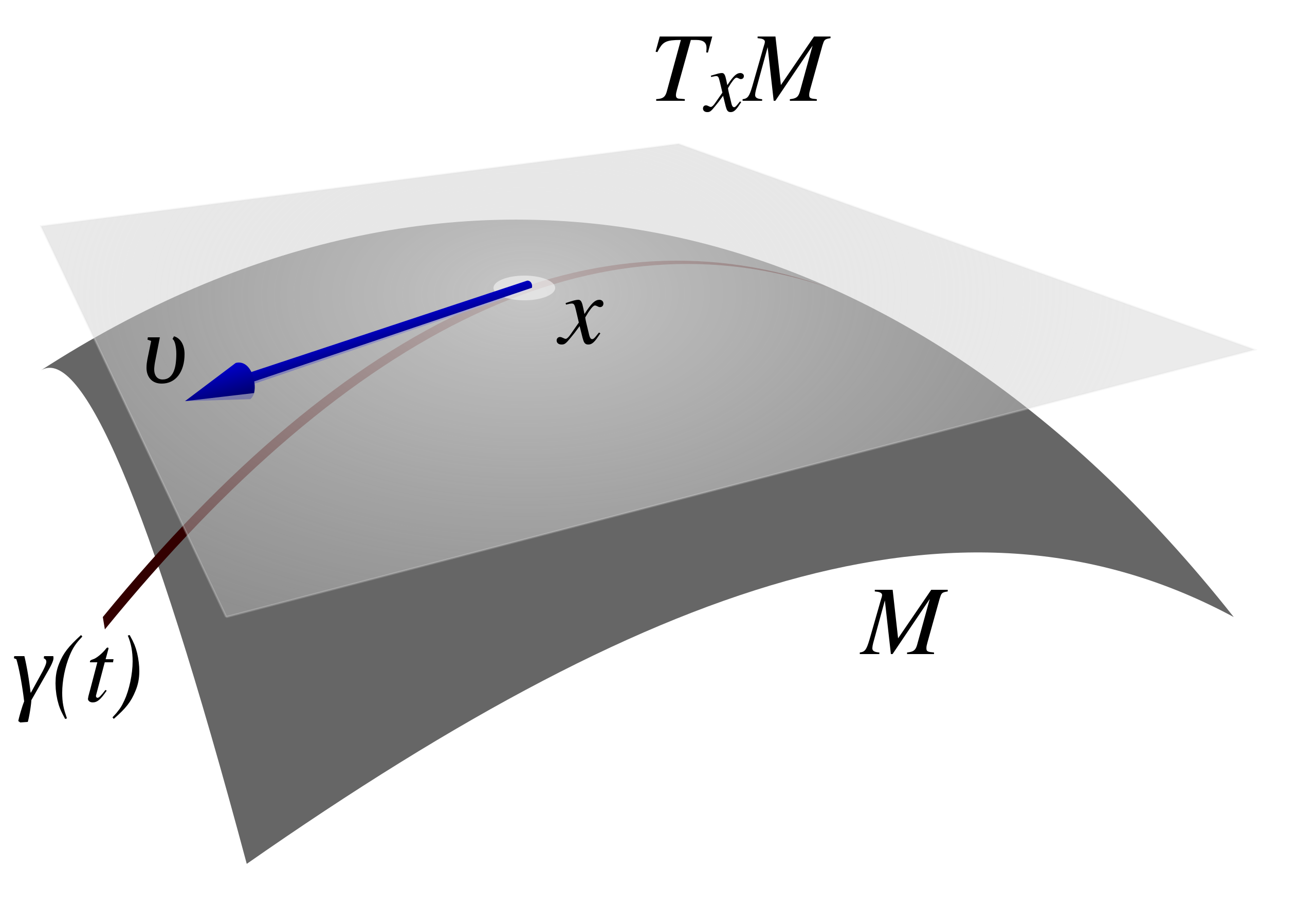}
\caption{In this figure, $M$ is a Riemannian manifold in $\mathbb{R}^3$, $x\in M$ is an element of the manifold, and  $T_xM$ is the tangent space of $M$ at $x$ (the set of all tangent vectors to all smooth curves on $M$ passing through $x$). $\gamma(t)$ is a $\mathcal{C}^1$ curve passing through $x$, and $v\equiv v(x)=(v^i(x))_{i\in\{1,2,3\}}$ is a vector field.}
\label{fig-riem}
\end{figure}

%Before proving Theorem \ref{thm-main-1}, let us point out that we make the dependence on the chosen $f \in \mathcal{A}_0$ as explicit as possible in the statement of the theorem because the minimizer $\underline{\mathbf{P}}f$ and the maximizer $\overline{\mathbf{P}}f$ are function of $f$. That is, it may well hold that $\underline{\mathbf{P}}f \neq \underline{\mathbf{P}}g$, for some $g \neq f$, $g \in\mathcal{A}_0$, and similarly for $\overline{\mathbf{P}}f$.

\begin{proof}[Proof of Theorem \ref{thm-main-1}]
    We begin by pointing out that, as a consequence of Lemma \ref{uniqueness}, $\underline{\mathbf{P}} $ and $\overline{\mathbf{P}} $ are (precise) Markov semigroups, and so the infinitesimal generators $\underline{L} $ and $\overline{L} $ are well-defined.

    We first focus on the greatest element $\overline{\mathbf{P}} $ of $\mathbf{Q}$. Recall that the variance $\text{Var}_{\overline{\mu} }(f)$ of function $f\in\mathbb{L}^2(\overline{\mu})$ 
    %with respect to measure $\overline{\mu} $ 
    is given by 
    %\citep[Equation 4.2.1]{bakry}
    \begin{equation}\label{variance}
        \text{Var}_{\overline{\mu} }(f)\coloneqq \int_E f^2 \text{d}\overline{\mu}  - \left( \int_E f  \text{d}\overline{\mu}  \right)^2,
    \end{equation}
    and that the Dirichlet form $\overline{\mathcal{E}} (f)$ at $f\in\mathcal{A}_0$ of the carré du champ operator $\overline{\Gamma} $ of the Markov generator $\overline{L} $ is given by 
    %\citep[Section 1.7.1]{bakry}
    $$\overline{\mathcal{E}} (f) \coloneqq \int_E \overline{\Gamma} (f) \text{d} \overline{\mu}.$$
    
    Pick any $f\in\mathcal{A}_0$. Given our hypotheses, by \citet[Theorem 3.2.3]{bakry}, we have that for every $t\geq 0$, the following gradient bound holds
    \begin{equation}\label{eq-grad-bd}
        \overline{\Gamma} \left(\overline{P} _t f \right) \leq e^{-2t \rho_{\overline{L}}} \overline{P} _t\left( \overline{\Gamma} (f) \right).
    \end{equation}
    Then, by \citet[Theorem 4.7.2]{bakry}, equation \eqref{eq-grad-bd} implies that the following local Poincaré inequality holds for all $t\geq 0$,
    \begin{equation}\label{local-poinc-ineq}
        \overline{P} _t(f^2) - \left( \overline{P} _t f \right)^2 \leq \frac{1-e^{-2t \rho_{\overline{L}}}}{\rho_{\overline{L}}} \overline{P} _t\left( \overline{\Gamma} (f) \right).
    \end{equation}
    Here the factor $\frac{1-e^{-2t \rho_{\overline{L}}}}{\rho_{\overline{L}}}$ needs to be understood as $2t$ when $\rho_{\overline{L}}=0$.
    %\mengqi{In \citet[Theorem 4.7.2]{bakry} they remarked that the factor needs to be understood as $2t$ when $\rho_{\overline{L}}=0$. Maybe worth mentioning it and excluding $\rho_{\overline{L}}=0$ before taking $t\to\infty$?}
    Following \citet[Theorem 2.35]{handel}, 
    %and \citet[Proposition 4.8.1]{bakry}, 
    taking the limit as $t\rightarrow \infty$ on both sides of  \eqref{local-poinc-ineq} yields the following Poincaré inequality
    \begin{equation}\label{poinc-ineq}
        \text{Var}_{\overline{\mu} }(f) \leq \frac{1}{\rho_{\overline{L}}} \overline{\mathcal{E}} (f). 
    \end{equation}
    Here it is implicitly assumed that $\rho_{\overline{L}}>0$; We expand on this in Remark \ref{techn-rem}.
    %\mengqi{This expression only makes sense if $\rho_{\overline{L}}>0$}
    %\mengqi{Maybe better to cite \citet[Theorem 2.35]{handel} as they explicitly mentioned taking $t\to\infty$ and getting the global Poincaré inequality from the local condition \eqref{local-poinc-ineq}? Mentioning this because I was a bit confused when I only saw Problem 2.7 hahah.}
    In addition, recall that for the Dirichlet form $\overline{\mathcal{E}} (f)$
    %by \citet[Equation 1.7.1]{bakry},
    we have that
    \begin{equation}\label{int-by-parts}
        \overline{\mathcal{E}} (f)=-\int_E f \overline{L}  f \text{d} \overline{\mu} .
    \end{equation}
    Combining \eqref{int-by-parts} with \eqref{poinc-ineq}, we have that if function $f$ is such that $\overline{L}  f =0$, then $\overline{\mathcal{E}} (f)=0$. In turn, this implies that $\text{Var}_{\overline{\mu} }(f)=0$, and so the function $f$ must be constant $\overline{\mu} $-a.e. 
 Moreover, $\mathcal A_0$ is a core for $\overline L$, i.e. it is dense in $\mathcal D(\overline L)$ for the graph norm
$\|f\|_{\mathcal D(\overline L)}=\|f\|_{\mathbb L^2(\overline\mu)}+\|\overline L f\|_{\mathbb L^2(\overline\mu)}$.\footnote{This norm and its properties are recalled in Appendix \ref{diff-mak-tr-back}.}
Therefore, equations \eqref{variance}-\eqref{int-by-parts} established on $\mathcal A_0$ extend to $\mathcal D(\overline L)$ by approximation.
%  Notice now that, 
   %  %as pointed out in \citet[Section 3.1.8]{bakry}, 
   %  given that $\overline{L} $ is an elliptic operator,
   %  %the Riemannian framework we are in, 
   %  $\mathcal{A}_0$ is dense in $\mathcal{D}(\overline{L} )$ in the topology induced by the infinitesimal generator domain norm $\|\cdot\|_{\mathcal{D}(\overline{L} )}$.\footnote{This norm is recalled in Definition \ref{diff-mark-trip}.(DMT9) in Appendix \ref{diff-mak-tr-back}.} 
   % % \citep[Section 3.4.1.D9]{bakry}
   %  Thanks to this, \eqref{variance}-\eqref{int-by-parts} hold for all $f\in\mathcal{D}(\overline{L} )$ by an approximation argument. 
This implies that every $f\in\mathcal{D}(\overline{L} )$ such that $\overline{L} f=0$ is constant, and so that $\overline{L} $ is ergodic according to \citet[Definition 3.1.11]{bakry}. Then, since $\overline{\mu} $ is finite,
    %This means that the Markov generator $\overline{L} $ is ergodic according to \cite[Definition 3.1.11]{bakry}, which implies -- 
    by \citet[Proposition 3.1.13]{bakry} we have that
    \begin{equation*}
        \forall f \in \mathbb{L}^2(\overline{\mu} ), \quad \lim_{t \rightarrow \infty} \overline{P}_t  f =\int_E f \text{d}\overline{\mu} , \quad \overline{\mu} \text{-a.e.}
    \end{equation*}
    in $\mathbb{L}^2(\overline{\mu} )$. This concludes the first part of the proof. The second part, that is, the one pertaining the least element $\underline{\mathbf{P}} $ of $\mathbf{Q}$, is analogous. The last statement of the theorem is immediate once we let $\overline{\mu} =\underline{\mu} =\mu $.
\end{proof}

\begin{remark}\label{techn-rem}
    Let us add a technical remark to Theorem \ref{thm-main-1}. If we do not know whether the (reversible invariant) measures $\overline{\mu} $ and $\underline{\mu} $ are finite, 
%\textit{not} required to be probability measures, that is, if they can be infinite, 
then in the statement of condition (ii) of Theorem \ref{thm-main-1} we must require that $\rho_{\underline{L}},\rho_{\overline{L}}>0$ \citep[Theorem 3.2.7]{bakry}, so that \eqref{eq-grad-bd} holds.
\end{remark}

Theorem~\ref{thm-main-1} is about the chosen representatives $\underline{\mathbf{P}} \in \underline{[\mathbf{P}]}_{\tilde{f}}$ and $\overline{\mathbf{P}} \in\overline{[\mathbf{P}]}_{\tilde{f}}$ : the invariant measures $\underline{\mu}, \overline{\mu}$ and generators $\underline{L}, \overline{L}$ are associated with these representatives. The proof uses only assumptions (i)-(iii) on the representative generator; Extremality is not used beyond selecting which semigroups are considered. Hence, the same convergence conclusion holds for any representative in $\mathbf{Q}$ (in particular, in the extremal classes) satisfying (i)-(iii). Moreover, if two representatives $\mathbf{P}, \mathbf{P}^{\prime}$ lie in the same class $[\mathbf{P}]=\left[\mathbf{P}^{\prime}\right]$ and both satisfy the conclusion with invariant measures $\mu, \mu^{\prime}$, then they necessarily satisfy $\int \tilde{f} d \mu=\int \tilde{f} d \mu^{\prime}$, since $P_t \tilde{f}=P_t^{\prime} \tilde{f}$ for all $t$. 
%Maybe it's worth adding a remark to clarify what's class-invariant (and what's not) in this theorem.}

Examples of semigroups for which the conditions (i)-(iii) of Theorem \ref{thm-main-1} are met are the Ornstein-Uhlenbeck and Laguerre semigroups (whose Markov generators satisfy the Bakry-Émery curvature condition with $\rho>0$; In particular, $\rho=1$ for the former, and $\rho=1/2$ for the latter). 
%\citep[Sections 2.7.1-2.7.3]{bakry}, 
%and the Brownian (Euclidean) semigroup (whose Markov generator satisfies the $CD(\rho,\infty)$ condition with $\rho=0$). \mengqi{Maybe worth saying a bit more about this example as I'm not sure what happens to \eqref{local-poinc-ineq} and \eqref{poinc-ineq} when $\rho=0$.}
%\citep[Section 2.1]{bakry}. 
In addition, an example of two Markov semigroups having the same invariant measure (as required for \eqref{ergodic-equiv-imp} to hold) are the Brownian motion with quadratic potential, and the Ornstein-Uhlenbeck semigroup with volatility parameter equals to $\sqrt{2}$. Indeed, for both, the invariant measure is a Gaussian with mean $0$, and variance the inverse of the rate of mean reversion.

The following is an important corollary that subsumes the ergodic behavior of the elements of $\mathbf{Q}$ when applied to $\tilde{f}\in B(E)$. We denote by $\underline\mu$, $\overline\mu$ the invariant measures of the chosen representatives of the extremal classes from Theorem~\ref{thm-main-1}.
\begin{corollary}[Ergodicity of Compact Imprecise Markov Semigroup $\mathbf{Q}$, $E$  Euclidean space or Riemannian manifold]\label{limiting-cor}
    Retain the assumptions of Theorem \ref{thm-main-1}, and let $\tilde{f}\in B(E)$ be the same functional that we fixed in Theorem \ref{thm-main-1}. Then, for all $\mathbf{P}=(P_t)_{t\geq 0} \in \mathbf{Q}$, we have that
    \begin{align}\label{cor-1-res}
        \lim_{t\rightarrow\infty} P_t \tilde{f} \geq \int_E \tilde{f}\text{d}\underline{\mu}, \quad \underline{\mu}\text{-a.e.}
    \end{align}
    in $\mathbb{L}^2(\underline{\mu})$, and
    \begin{align}\label{cor-2-res}
        \lim_{t\rightarrow\infty} P_t \tilde{f} \leq \int_E \tilde{f}\text{d}\overline{\mu}, \quad \overline{\mu}\text{-a.e.}
    \end{align}
    in $\mathbb{L}^2(\overline{\mu})$. In addition, if the invariant measure for $\underline{\mathbf{P}} $ is equal to that of $\overline{\mathbf{P}} $, and if we denote it by $\mu $, then
    \begin{align}\label{cor-3-res}
        \lim_{t\rightarrow\infty} P_t \tilde{f} = \int_E \tilde{f}\text{d}{\mu}, \quad {\mu}\text{-a.e.}
    \end{align}
    in $\mathbb{L}^2(\mu )$.
\end{corollary}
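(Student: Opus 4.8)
The plan is to leverage the fact that, by definition, $\underline{\mathbf{P}}$ and $\overline{\mathbf{P}}$ are the least and greatest elements of $\mathbf{Q}$ with respect to $\preceq_{\tilde{f}}^{\text{part}}$ (or $\preceq_{\tilde{f}}^{\text{tot}}$), and then to transport the conclusions of Theorem \ref{thm-main-1} from these two extremal semigroups to an arbitrary $\mathbf{P}\in\mathbf{Q}$ via a sandwich argument. For every $\mathbf{P}=(P_t)_{t\geq 0}\in\mathbf{Q}$ and every $t\geq 0$, the very definition of the order $\preceq_{\tilde{f}}$ gives
\begin{equation*}
    \underline{P}_t\tilde{f}\leq P_t\tilde{f}\leq\overline{P}_t\tilde{f}.
\end{equation*}
Moreover, since the paper assumes throughout that invariant measures are probability measures, $\tilde{f}\in B(E)\subseteq\mathbb{L}^2(\underline{\mu})\cap\mathbb{L}^2(\overline{\mu})$, so Theorem \ref{thm-main-1} does apply with $f=\tilde{f}$.

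First I would invoke Theorem \ref{thm-main-1} to obtain $\underline{P}_t\tilde{f}\to\int_E\tilde{f}\,\text{d}\underline{\mu}$ and $\overline{P}_t\tilde{f}\to\int_E\tilde{f}\,\text{d}\overline{\mu}$, both in the $\mu$-a.e.\ sense (for the respective measures) and in the respective $\mathbb{L}^2$ spaces. Using the $\underline{\mu}$-a.e.\ (resp.\ $\overline{\mu}$-a.e.) convergence and monotonicity of $\liminf$, passing to the limit in the left half of the sandwich yields
\begin{equation*}
    \liminf_{t\to\infty}P_t\tilde{f}\ \geq\ \lim_{t\to\infty}\underline{P}_t\tilde{f}\ =\ \int_E\tilde{f}\,\text{d}\underline{\mu}\qquad\underline{\mu}\text{-a.e.},
\end{equation*}
which is \eqref{cor-1-res}, and symmetrically $\limsup_{t\to\infty}P_t\tilde{f}\leq\int_E\tilde{f}\,\text{d}\overline{\mu}$ $\overline{\mu}$-a.e., which is \eqref{cor-2-res}. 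Where $\lim_{t\to\infty}P_t\tilde{f}$ is not a priori known to exist, \eqref{cor-1-res} and \eqref{cor-2-res} are to be read in this one-sided ($\liminf$, resp.\ $\limsup$) sense.

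For the final claim, when $\underline{\mu}=\overline{\mu}=\mu$ the two bounds coincide $\mu$-a.e., and the sandwich collapses to
\begin{equation*}
    \int_E\tilde{f}\,\text{d}\mu\ \leq\ \liminf_{t\to\infty}P_t\tilde{f}\ \leq\ \limsup_{t\to\infty}P_t\tilde{f}\ \leq\ \int_E\tilde{f}\,\text{d}\mu\qquad\mu\text{-a.e.},
\end{equation*}
so $\lim_{t\to\infty}P_t\tilde{f}$ exists and equals $\int_E\tilde{f}\,\text{d}\mu$ $\mu$-a.e. The $\mathbb{L}^2(\mu)$ convergence then follows from the triangle-inequality estimate $|P_t\tilde{f}-\int_E\tilde{f}\,\text{d}\mu|\leq(\overline{P}_t\tilde{f}-\underline{P}_t\tilde{f})+|\underline{P}_t\tilde{f}-\int_E\tilde{f}\,\text{d}\mu|$, whose right-hand side tends to $0$ in $\mathbb{L}^2(\mu)$ because both extremal sequences converge to $\int_E\tilde{f}\,\text{d}\mu$ in $\mathbb{L}^2(\mu)$ by Theorem \ref{thm-main-1}; this gives \eqref{cor-3-res}.

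The only non-mechanical point --- hence the ``main obstacle'' --- is the bookkeeping of the distinct almost-everywhere qualifiers: \eqref{cor-1-res} holds on a $\underline{\mu}$-conull set and \eqref{cor-2-res} on a $\overline{\mu}$-conull set, and these cannot be merged into a single statement without the hypothesis $\underline{\mu}=\overline{\mu}$ that underlies \eqref{cor-3-res}. A related subtlety is that for a generic $\mathbf{P}\in\mathbf{Q}$ nothing guarantees that $\lim_{t\to\infty}P_t\tilde{f}$ exists, so only in the equal-measure case does the two-sided squeeze upgrade $\liminf$/$\limsup$ to an honest limit; everything else is a routine consequence of the order structure of $\mathbf{Q}$ and of Theorem \ref{thm-main-1}, with the passage of the inequality through the limit being legitimate precisely because that theorem delivers $\mu$-a.e.\ (not merely $\mathbb{L}^2$) convergence of the extremal sequences.
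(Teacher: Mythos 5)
Your proposal is correct and is essentially the paper's own argument: the paper's proof consists of the single observation that the claims are ``immediate from Theorem \ref{thm-main-1}'' and the fact that $\underline{\mathbf{P}}$ and $\overline{\mathbf{P}}$ are the least and greatest elements of $\mathbf{Q}$, i.e.\ exactly the sandwich $\underline{P}_t\tilde{f}\leq P_t\tilde{f}\leq\overline{P}_t\tilde{f}$ that you spell out. Your additional remarks --- that \eqref{cor-1-res} and \eqref{cor-2-res} must be read as $\liminf$/$\limsup$ statements since the limit of $P_t\tilde{f}$ need not exist for a generic $\mathbf{P}\in\mathbf{Q}$, and that the two a.e.\ qualifiers live on different conull sets --- are legitimate precisifications of what the paper leaves implicit.
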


\begin{proof}
    Immediate from Theorem \ref{thm-main-1}, and the fact that -- as a consequence of Lemma \ref{uniqueness} and Definition \ref{cpct-imsg-def} -- $\overline{\mathbf{P}}$ and $\underline{\mathbf{P}}$ are the greatest and least elements of $\mathbf{Q}$, respectively.
\end{proof}

%* do the same for other thm
%* check the discussion below to be sure that everything is ok
%* double check everything wrt $\tilde{f}$

Let us add a few comments on Corollary~\ref{limiting-cor}. 
If $\mu$ is an invariant measure for a (precise) semigroup $\mathbf P=(P_t)_{t\ge0}$, then by definition $\mu P_t=\mu$ for all $t\ge0$; Equivalently,
$\int_E P_t f \mathrm d\mu=\int_E f \mathrm d\mu$ for every bounded $f$. 
This is a \emph{stationarity} statement and holds for every $t$ as soon as the initial law is chosen to be $\mu$.
Corollary~\ref{limiting-cor}, however, is of a different nature: It provides \emph{long-term} bounds for the conditional expectation
$P_t\tilde f(x)=\mathbb E[\tilde f(X_t)\mid X_0=x]$ (uniformly in the initial state, and robustly over $\mathbf Q$), for a fixed $\tilde f$, which is not implied by invariance alone and typically requires mixing/ergodicity assumptions.
In particular, in the case $\underline{\mu}=\overline{\mu}=\mu$, the bounds collapse to the same constant,
\[
\lim_{t\to\infty}\underline P_t\tilde f(x)=\lim_{t\to\infty}\overline P_t\tilde f(x)=\int_E \tilde f \mathrm d\mu,
\]
so the imprecision about the long-run expectation of $\tilde f$ vanishes, even though the short-run values may differ across models in $\mathbf Q$.
Finally, we emphasize that the limits in Corollary~\ref{limiting-cor} are \emph{not} Ces\`aro time averages along a single trajectory; Rather, they concern convergence of the semigroup (one-time marginals) as $t\to\infty$.
Motivated by \eqref{cor-1-res}--\eqref{cor-3-res}, we say that $\mathbf Q$ is $\tilde f$-\emph{lower ergodic} if it satisfies \eqref{cor-1-res},
$\tilde f$-\emph{upper ergodic} if it satisfies \eqref{cor-2-res}, and $\tilde f$-\emph{ergodic} if it satisfies \eqref{cor-3-res}.

\iffalse 
Let us add some interesting comments on Corollary \ref{limiting-cor}. In \eqref{cor-3-res}, we see how, if the initial distribution -- i.e. the distribution of $X_0$ -- is known and equal to $\mu $, then the ambiguity that the agent faces around the correct transition probability 
%$p_t(x,\cdot)$
of the process from $X_0$ to $X_t$, vanishes as $t\rightarrow\infty$. We recover the initial (reversible invariant) distribution $\mu $, according to which we compute the expectations. As the classical ergodic adage 
%for the classical Birkhoff's ergodic theorem \citep{Birkhoff656} 
goes, ``in the limit, time average ($\lim_{t \rightarrow\infty} \underline{P}_t \tilde{f}=\lim_{t \rightarrow\infty} \overline{P}_t \tilde{f}$) equals space average ($\int_E \tilde{f} \text{d}\mu $)''. 
%Of course, this holds for every possible initial choice of $\tilde{f}\in\mathcal{A}_0$. 
We say that an imprecise Markov semigroup $\mathbf{Q}$ is $\tilde{f}$-\textit{lower ergodic} if it satisfies \eqref{cor-1-res}, $\tilde{f}$-\textit{upper ergodic} if it satisfies \eqref{cor-2-res}, and $\tilde{f}$-\textit{ergodic} if it satisfies \eqref{cor-3-res}. 
\fi

As we pointed out in Section \ref{prelim}, there are two main differences with respect to the existing literatures on imprecise Markov processes and ergodicity for imprecise probabilities. We are the first to explicitly exploit the geometry of the state space $E$ via the Bakry-Émery curvature condition to derive our results. 
%To the best of our knowledge, this has never been explored before in the context of imprecise Markov processes. 
In addition, in Theorem \ref{thm-main-1} and Corollary \ref{limiting-cor} we represent the scholar's ambiguity about the invariant measure and the transition probability via a compact imprecise Markov semigroup $\mathbf{Q}$.
%Second, in Theorem \ref{thm-main-1} the upper and lower previsions depend on the initial choice of $\tilde{f}$. 
We briefly mention that our approach differs slightly from the imprecise ergodic literature \citep{KozineUtkin2002intervalValuedFiniteMarkovChains,HermansDeCooman2012ergodicUpperTransitionOperators,Skulj2013classificationInvariantDistributions,Skulj2015efficientComputationCTIMC,cerreia,DECOOMAN201618,dipk,ergo_me}, where ambiguity around measure $\mu$ is typically represented via a Choquet approach involving a superadditive Choquet capacity.
%$\underline{\mu}$.

We conclude this section with a corollary that further informs us on the geometrical properties of $E$ in the ergodic regime of $\mathbf{Q}$, when $E$ is a complete Riemannian manifold. Let us denote by $E^n_K$ the complete $n$-dimensional simply connected space of constant sectional curvature $K \in \mathbb{R}$,\footnote{The Ricci curvature 
 of $E^n_K$ is constant and equal to $(n-1)K$.}
%an $n$-sphere of radius $1/\sqrt{K}$, $K>0$, 
and by $B(p,r)$ the ball of radius $r$ at point $p$, defined with respect to the Riemannian distance function. Once again, the values that we study are tied to the choice of the representatives $\underline{\mathbf{P}}$ and $\overline{\mathbf{P}}$.

%\mengqi{Same as before: these values are tied to the chosen representatives.}
\begin{corollary}[Bishop-Gromov Inequality]\label{bishop-gromov}
    Let $E \subseteq \mathbb{R}^n$ be a complete Riemannian manifold. Retain the first part of Theorem \ref{thm-main-1}, and assume that condition (ii) holds. Then,
    $$\text{Vol}\left[ B(p,r) \right] \leq \text{Vol}\left[ B(p_K,r) \right],$$
    for all $p\in E$, all $p_K \in E^n_K$, all $r \in (0,\infty)$, and some $K$ that depends on $\rho_{\underline{L}}$ and $\rho_{\overline{L}}$. Here, $\text{Vol}[\cdot]$ denotes the volume operator.
\end{corollary}

\begin{proof}
    Given our assumptions, we know that $CD(\rho_{\underline{L}},\infty)$ and $CD(\rho_{\overline{L}},\infty)$ both hold. Let us define $\rho_\star \coloneqq \min\{\rho_{\underline{L}}, \rho_{\overline{L}}\}$, and put $\rho_\star = (n-1)K$, so that $K= \rho_\star/(n-1)$. The Bakry-Émery curvature assumptions imply that the Ricci tensor at every point of $E$ is bounded from below by $\rho_\star$.\footnote{This is discussed in Appendix \ref{riemann-manif}.} In turn, we have that the Ricci curvature $\text{Ric}$ of $E$ is lower bounded by $(n-1)K$, that is, $\text{Ric} \geq (n-1)K$. The result then follows from \citet{bishop}.
    \end{proof} 
    %Notice that, if $\underline{\mu}$ and $\overline{\mu}$ are finite, we have that $\rho_{\underline{L}}$ and $\rho_{\overline{L}}$ are both positive. If either $\underline{\mu}$ or $\overline{\mu}$ (or both) are not finite, then we need to assume that $\rho_{\underline{L}},\rho_{\overline{L}}>0$, as we pointed out in Remark \ref{techn-rem}. In turn, we have that the Ricci curvature $\text{Ric}$ of $E$ is positive, and it is lower bounded by both $\rho_{\underline{L}}$ and $\rho_{\overline{L}}$.

Corollary \ref{bishop-gromov} tells us that -- under the Bakry-Émery curvature condition (ii) of Theorem \ref{thm-main-1} -- the volume of a ball around any element of the manifold $E$ is upper bounded by the volume of a ball around any element of $E^n_K$ having the same radius. Further comparison results like Corollary \ref{bishop-gromov} that hold in the ergodic regime of $\mathbf{Q}$ will be the subject of future study.

\iffalse 
also mention how our ergodic results differ slightly from what has been previously studied in the literature. There
%In the existing literature, instead, there are two different approaches. In one 
(see e.g. \citep{dipk,ergo_me,cerreia}), a generic lower probability $\underline{P}$ is given,\footnote{$\underline{P}$ can be thought of as the lower envelope of an associated set $\mathcal{P}$ of probability measures on $E$, which is conjugate to the upper envelope $\overline{P}$ of the set, i.e. $\overline{P}(A)=1-\underline{P}(A^c)$, for all $A\in\mathcal{F}$.} and the lower prevision is computed via a Choquet integral $\int_E f \text{d}\underline{P}$ \citep{choquet}, for all bounded measurable functionals on $E$. $\int_E f \text{d}\underline{P}$ coincides with $\inf_{P\in\mathcal{P}} \int_E f \text{d} P$ only if $\underline{P}$ is a convex (or $2$-monotone) lower probability \citep[Section 2.1.(ii)]{cerreia}. In this paper, we do not have a distinction between lower previsions computed according to Choquet integrals or the infimum of additive Riemannian integrals. Indeed, since we work with a compact set $\mathbf{Q}$ of semigroups of linear operators, we have that $\overline{P}_t f$, $\underline{P}_t f$, $\int_E f \text{ d} \overline{\mu}$, and $\int_E f \text{ d} \underline{\mu}$ are all additive integrals.
%In the other approach (see e.g. \citep{krak,hitting}) -- that we could define a Walleyan approach \citep{walley} -- imprecise Markov processes are ``built from the ground up'' starting from lower previsions capturing the belief state of the agent regarding the problem at hand. Instead, we model imprecision via w-closed sets of transition probabilities.
\fi

\section{Ergodicity in the General Case -- Compact $\mathbf{Q}$}\label{idmt-section}
In this section, our goal is to extend Theorem \ref{thm-main-1} and Corollary \ref{limiting-cor} to the case where $(E,\mathcal{F})$ is an arbitrary good measurable space, while retaining the assumptions that $\mathbf{Q}$ is compact and $\tilde f$ is fixed. To do so, we need to introduce the concept of an Imprecise Diffusion Markov Tuple, the imprecise version of a diffusion Markov triple; The latter is recalled in Appendix \ref{diff-mak-tr-back}. Similarly to Section \ref{riemann}, in the following definition, theorem, and corollary we have that 
%$\mathbf{Q}\equiv \mathbf{Q}_{\tilde{f}}$\mengqi{Similarly we keep $\mathbf{Q}$ and $\mathbf{Q}_{\tilde{f}}$ separate then this is not necessary, and I don't think it affects anything below}, and that 
$\underline{\mathbf{P}} \equiv \underline{\mathbf{P}}_{\tilde{f}}$, a fixed chosen representative from $\underline{[\mathbf{P}]}_{\tilde{f}}$, and similarly for $\overline{\mathbf{P}}$.

%\mengqi{Similarly here, we need to write in terms of equivalence classes}
\begin{definition}[Imprecise Diffusion Markov (5-)Tuple]\label{idmt-def}
    Fix any $\tilde{f}\in B(E)$, and assume that we can define the partial order $\preceq_{\tilde{f}}^\text{part}$ or the total order $\preceq_{\tilde{f}}^\text{tot}$ that we introduced in Section \ref{prelim}. An \textit{imprecise diffusion Markov tuple} $(E,\overline{\mu} ,\underline{\mu} ,\overline{\Gamma} ,\underline{\Gamma} )$ is a 5-tuple such that the following holds.
    \begin{itemize}
        \item $(E,\mathcal{F})$ is a good measurable space.
        \item $\overline{\mu} $ and $\underline{\mu} $ are ($\sigma$-finite) measures on $\mathcal{F}$.
        %$\sigma$-finite \mengqi{probability measures?}
        %, whose values $\overline{\mu} (A)$ and $\underline{\mu} (A)$, $A\in\mathcal{F}$, depend on the choice of a continuous bounded measurable functional $\tilde{f}$ on $E$.
        \item $\mathcal{A}^{\overline{\mu}}_0$ and $\mathcal{A}^{\underline{\mu}}_0$ are vector spaces of bounded measurable functionals dense in all $\mathbb{L}^p(\overline{\mu} )$ and all $\mathbb{L}^p(\underline{\mu} )$ spaces, $1\leq p < \infty$, respectively, and stable under products. 
        %Function $\tilde f$ is assumed to belong to both $\mathcal{A}^{\overline{\mu}}_0$ and $\mathcal{A}^{\underline{\mu}}_0$.
        \item $(E,\overline{\mu} ,\overline{\Gamma} )$ and  $(E,\underline{\mu} ,\underline{\Gamma} )$ are diffusion Markov triples having associated algebras $\mathcal{A}^{\overline{\mu}}_0$ and $\mathcal{A}^{\underline{\mu}}_0$, respectively, and carré du champs operators $\overline{\Gamma} $ and $\underline{\Gamma} $, respectively.
        \item The Markov semigroups $\overline{\mathbf{P}} =(\overline{P} _t)_{t \geq 0}$ and $\underline{\mathbf{P}} =(\underline{P} _t)_{t \geq 0}$ associated with $\overline{\Gamma} $ and $\underline{\Gamma} $ are the greatest and least elements of a compact imprecise Markov semigroup $\mathbf{Q}$,\footnote{With respect to $\preceq^\text{part}_{\tilde{f}}$ or $\preceq^\text{tot}_{\tilde{f}}$.} respectively.
    \end{itemize}
\end{definition}

We are now ready for the main result of this section. It hinges on the concepts of adjoint operator, self-adjointness, essential self-adjointness (ESA), extended algebra $\mathcal{A}$ of the algebra $\mathcal{A}_0$ associated with a diffusion Markov triple, connexity, and weak hypo-ellipticity. They are brushed off in Appendix \ref{diff-mak-tr-back}.
%We are finally ready for the main result of this section. 
As its proof technique 
%of the following theorem 
is similar to that of Theorem \ref{thm-main-1} (despite exploiting different, more general results), we defer it to Appendix \ref{app-a}.

\begin{theorem}[Limiting Behavior of $\underline{\mathbf{P}}$ and $\overline{\mathbf{P}}$, General]\label{thm-main-2}
    Fix any $\tilde{f}\in B(E)$, and assume that we can define the partial order $\preceq_{\tilde{f}}^\text{part}$ or the total order $\preceq_{\tilde{f}}^\text{tot}$ that we introduced in Section \ref{prelim}. Let $(E,\overline{\mu} ,\underline{\mu} ,\overline{\Gamma} ,\underline{\Gamma} )$ be an imprecise diffusion Markov tuple, and denote by $\mathcal{A}^{\overline{\mu}}$ and $\mathcal{A}^{\underline{\mu}}$ the extended algebras of $\mathcal{A}_0^{\overline{\mu}}$ and $\mathcal{A}_0^{\underline{\mu}}$ associated with $(E,\overline{\mu} ,\overline{\Gamma} )$ and  $(E,\underline{\mu} ,\underline{\Gamma} )$, respectively. If the connexity and weak hypo-ellipticity assumptions hold for both $(E,\overline{\mu} ,\overline{\Gamma} )$ and  $(E,\underline{\mu} ,\underline{\Gamma} )$, and if the Bakry-Émery curvature conditions $CD(\rho_{\overline{L}},\infty)$ and $CD(\rho_{\underline{L}},\infty)$ are met by both the infinitesimal generators $\overline{L} $ and $\underline{L} $ associated with $\overline{\Gamma} $ and $\underline{\Gamma} $, respectively, for some $\rho_{\overline{L}},\rho_{\underline{L}}\in\mathbb R$, then the following hold
    $$\forall f \in \mathbb{L}^2(\underline{\mu} ), \quad \lim_{t \rightarrow \infty} \underline{P}_t  f = \mathbb{E}_{\underline{\mu} }[f] = \int_E f \text{d}\underline{\mu}  \quad \text{in } \mathbb{L}^2(\underline{\mu} ), \quad \underline{\mu} \text{-a.e.}$$
    and
    $$\forall f \in \mathbb{L}^2(\overline{\mu} ), \quad \lim_{t \rightarrow \infty} \overline{P}_t  f = \mathbb{E}_{\overline{\mu} }[f] = \int_E f \text{d}\overline{\mu}  \quad \text{in } \mathbb{L}^2(\overline{\mu} ), \quad \overline{\mu} \text{-a.e.}$$
    In addition, if $\overline{\mu} =\underline{\mu} =\mu $, then
    $$\forall f \in \mathbb{L}^2(\mu ), \quad \lim_{t \rightarrow \infty} \overline{P}_t  f =  \lim_{t \rightarrow \infty} \underline{P}_t  f =\mathbb{E}_{\mu }[f] = \int_E f \text{d}\mu  \quad \text{in } \mathbb{L}^2(\mu ), \quad \mu \text{-a.e.}$$
\end{theorem}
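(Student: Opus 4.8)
The plan is to run the argument in the proof of Theorem \ref{thm-main-1} almost verbatim, with two substitutions: essential self-adjointness (ESA) of the boundary generators replaces the ellipticity hypothesis (it was ESA, not ellipticity per se, that made $\mathcal{A}_0$ dense in the domain of the generator there), and connexity replaces the ``elliptic $\Rightarrow$ kernel of $L$ is constants'' step. As a preliminary, Lemma \ref{uniqueness} together with Definition \ref{imsg-def} guarantees that $\overline{\mathbf{P}}$ and $\underline{\mathbf{P}}$ are genuine (precise) Markov semigroups, so the generators $\overline{L},\underline{L}$, the carré du champ operators $\overline{\Gamma},\underline{\Gamma}$, the Dirichlet forms $\overline{\mathcal{E}},\underline{\mathcal{E}}$ and the iterated operators $\overline{\Gamma}_2,\underline{\Gamma}_2$ are all available on the extended algebras $\mathcal{A}^{\overline{\mu}},\mathcal{A}^{\underline{\mu}}$ of Definition \ref{ext-alg}. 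I would carry the whole argument out for the greatest element $\overline{\mathbf{P}}$; the statement for $\underline{\mathbf{P}}$ follows by replacing every overline by an underline, and the final assertion by then setting $\overline{\mu}=\underline{\mu}=\mu$.

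\emph{Step 1: ESA from connexity and weak hypo-ellipticity.} First I would show that the diffusion Markov triple $(E,\overline{\mu},\overline{\Gamma})$ is essentially self-adjoint, i.e. $\mathcal{A}_0^{\overline{\mu}}$ is dense in $\mathcal{D}(\overline{L})$ for the domain norm $\|\cdot\|_{\mathcal{D}(\overline{L})}$. Since $\overline{L}$ restricted to $\mathcal{A}_0^{\overline{\mu}}$ is symmetric and bounded above ($\int_E f\overline{L}f\,\text{d}\overline{\mu}=-\overline{\mathcal{E}}(f)\leq 0$), by the deficiency-space criterion it suffices to check that, for some $\lambda>0$, the only $f\in\mathcal{D}(\overline{L}^{*})$ with $\overline{L}^{*}f=\lambda f$ is $f=0$. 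Weak hypo-ellipticity (Definition \ref{conn-w-hypo-ell}) puts any such $f$ into the extended algebra $\mathcal{A}^{\overline{\mu}}$, where the integration-by-parts machinery of Definition \ref{ext-alg}.(vii) and Definition \ref{diff-mark-trip}.(DMT5) gives $\lambda\int_E f^{2}\,\text{d}\overline{\mu}=\int_E f\,\overline{L}f\,\text{d}\overline{\mu}=-\overline{\mathcal{E}}(f)=-\int_E\overline{\Gamma}(f)\,\text{d}\overline{\mu}\leq 0$ by positivity of $\overline{\Gamma}$ (Definition \ref{ext-alg}.(v)); as $\lambda>0$ this forces $f=0$, whence $\mathcal{D}(\overline{L})=\mathcal{D}(\overline{L}^{*})$, which is ESA. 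Connexity is then used separately: if $f\in\mathcal{D}(\overline{L})$ and $\overline{L}f=0$, then (via weak hypo-ellipticity at $\lambda=0$ and ESA) $f\in\mathcal{A}^{\overline{\mu}}$ and $\overline{\mathcal{E}}(f)=-\int_E f\,\overline{L}f\,\text{d}\overline{\mu}=0$, so $\overline{\Gamma}(f)=0$ $\overline{\mu}$-a.e., hence $f$ is constant. I would fill in the bookkeeping between $\mathcal{A}_0^{\overline{\mu}}$, $\mathcal{A}^{\overline{\mu}}$, $\mathcal{D}(\overline{L})$ and $\mathcal{D}(\overline{L}^{*})$ following \citet[Section 3.3.3]{bakry}.

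\emph{Step 2: Poincaré chain and convergence.} With ESA in hand the proof of Theorem \ref{thm-main-1} transplants. Fix $f\in\mathcal{A}_0^{\overline{\mu}}$. The curvature condition $CD(\rho_{\overline{L}},\infty)$ yields, by the diffusion-Markov-triple version of \citet[Theorem 3.2.3]{bakry}, the gradient bound $\overline{\Gamma}(\overline{P}_t f)\leq e^{-2t\rho_{\overline{L}}}\,\overline{P}_t(\overline{\Gamma}(f))$ for all $t\geq 0$; this implies the local Poincaré inequality $\overline{P}_t(f^{2})-(\overline{P}_t f)^{2}\leq \frac{1-e^{-2t\rho_{\overline{L}}}}{\rho_{\overline{L}}}\,\overline{P}_t(\overline{\Gamma}(f))$ by \citet[Theorem 4.7.2]{bakry}, and letting $t\to\infty$ (cf. \citet[Proposition 4.8.1]{bakry}) the Poincaré inequality $\text{Var}_{\overline{\mu}}(f)\leq\rho_{\overline{L}}^{-1}\,\overline{\mathcal{E}}(f)$. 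Combined with $\overline{\mathcal{E}}(f)=-\int_E f\,\overline{L}f\,\text{d}\overline{\mu}$ and the density from Step 1, every $f\in\mathcal{D}(\overline{L})$ with $\overline{L}f=0$ has zero variance, hence is constant $\overline{\mu}$-a.e.; so $\overline{L}$ is ergodic in the sense of \citet[Definition 3.1.11]{bakry}. Since $\overline{\mu}$ is a probability (finite) measure, \citet[Proposition 3.1.13]{bakry} gives $\lim_{t\to\infty}\overline{P}_t f=\int_E f\,\text{d}\overline{\mu}$ in $\mathbb{L}^{2}(\overline{\mu})$ and $\overline{\mu}$-a.e. for every $f\in\mathbb{L}^{2}(\overline{\mu})$, which is the claim.

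\emph{Main obstacle.} The crux is Step 1: in Section \ref{riemann} ESA came for free from ellipticity on a complete Riemannian manifold, whereas here it must be squeezed out of weak hypo-ellipticity and connexity, and the delicate point is making the eigenfunction and integration-by-parts manipulations legitimate across the nested spaces $\mathcal{A}_0^{\overline{\mu}}\subset\mathcal{A}^{\overline{\mu}}$ and $\mathcal{D}(\overline{L})\subset\mathcal{D}(\overline{L}^{*})$ — exactly the content that \citet[Section 3.3.3]{bakry} is designed to handle. A secondary point, mirroring the remark after Theorem \ref{thm-main-1}: if one does not assume $\overline{\mu},\underline{\mu}$ finite, the curvature constants must be taken strictly positive, $\rho_{\overline{L}},\rho_{\underline{L}}>0$ (using \citet[Theorem 3.2.7]{bakry}), for the Poincaré inequality and the convergence to the spatial mean to remain valid.
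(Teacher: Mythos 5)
Your proposal follows essentially the same route as the paper's own proof: establish essential self-adjointness of $\overline{L}$ and $\underline{L}$ from the connexity and weak hypo-ellipticity hypotheses, then run the chain gradient bound $\Rightarrow$ local Poincaré inequality $\Rightarrow$ Poincaré inequality $\Rightarrow$ ergodicity of the generator $\Rightarrow$ convergence via \citet[Proposition 3.1.13]{bakry}, exactly as in Theorem \ref{thm-main-1} with ESA replacing ellipticity. The only cosmetic difference is that the paper obtains ESA by directly citing \citet[Proposition 3.3.11]{bakry} (and the gradient bound via \citet[Corollary 3.3.19]{bakry}), whereas you sketch the deficiency-space argument behind that proposition; the substance is the same.
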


The connexity and weak hypo-ellipticity assumptions allow to ``import'' some of the geometric structure of the Riemannian manifolds considered in Section \ref{riemann} to the more general diffusion Markov triple setting, and to relax the requirement in Theorem \ref{thm-main-1} that $\underline{L} $ and $\overline{L} $ are elliptic operators. This is expanded upon in Appendix \ref{diff-mak-tr-back}.

Similarly to what we pointed out in Remark \ref{techn-rem}, if we do not know whether the (reversible invariant) measures $\overline{\mu} $ and $\underline{\mu} $ are finite, then in the statement of Theorem \ref{thm-main-2} we must require that $\rho_{\overline{L}}, \rho_{\underline{L}}>0$ \citep[Theorem 3.3.23]{bakry}.
%\footnote{So that \eqref{eq-grad-bd-gen} holds.}

Similarly to Corollary \ref{limiting-cor}, the following is an important result that subsumes the ergodic behavior of the elements of a compact IMSG $\mathbf{Q}$ when applied to $\tilde{f}\in B(E)$.

\begin{corollary}[Ergodicity of Compact Imprecise Markov Semigroup $\mathbf{Q}$, General]\label{limiting-cor2}
    Retain the assumptions of Theorem \ref{thm-main-2}, and let $\tilde{f}\in B(E)$ be the same functional that we fixed in Theorem \ref{thm-main-2}. Then, for all $\mathbf{P}=(P_t)_{t\geq 0} \in \mathbf{Q}$, we have that
    $$\lim_{t\rightarrow\infty} P_t \tilde{f} \geq \int_E \tilde{f}\text{d}\underline{\mu}, \quad \underline{\mu}\text{-a.e.}$$
    in $\mathbb{L}^2(\underline{\mu})$, and
    $$\lim_{t\rightarrow\infty} P_t \tilde{f} \leq \int_E \tilde{f}\text{d}\overline{\mu}, \quad \overline{\mu}\text{-a.e.}$$
    in $\mathbb{L}^2(\overline{\mu})$. In addition, if the invariant measure for $\underline{\mathbf{P}} $ is equal to that of $\overline{\mathbf{P}} $, and if we denote it by $\mu $, then
    $$\lim_{t\rightarrow\infty} P_t \tilde{f} = \int_E \tilde{f}\text{d}{\mu}, \quad {\mu}\text{-a.e.}$$
    in $\mathbb{L}^2(\mu )$.
\end{corollary}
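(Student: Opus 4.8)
The plan is to deduce this corollary from Theorem~\ref{thm-main-2} by a two-sided squeeze, mirroring the way Corollary~\ref{limiting-cor} follows from Theorem~\ref{thm-main-1}. First I would record the structural fact that, by Lemma~\ref{uniqueness} and Definition~\ref{imsg-def}, the imprecise Markov semigroup $\mathbf{Q}$ has a least element $\underline{\mathbf{P}}$ and a greatest element $\overline{\mathbf{P}}$ with respect to the order ($\preceq^\text{part}_{\tilde{f}}$ or $\preceq^\text{tot}_{\tilde{f}}$) fixed in the hypotheses, and that both are genuine Markov semigroups associated with Markov processes. By the definition of that order, for every $\mathbf{P}=(P_t)_{t\geq 0}\in\mathbf{Q}$ and every $t\geq 0$ we have, as classes of functions,
\begin{equation*}
    \underline{P}_t\tilde{f}\leq P_t\tilde{f}\leq\overline{P}_t\tilde{f}.
\end{equation*}

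Next I would invoke Theorem~\ref{thm-main-2} with $f=\tilde{f}$; since $\tilde{f}\in B(E)$ and, by the standing assumption of Section~\ref{prelim}, the reversible invariant measures are probabilities, we have $\tilde{f}\in\mathbb{L}^2(\underline{\mu})\cap\mathbb{L}^2(\overline{\mu})$, so the theorem yields $\underline{P}_t\tilde{f}\to\int_E\tilde{f}\,\text{d}\underline{\mu}$ both in $\mathbb{L}^2(\underline{\mu})$ and $\underline{\mu}$-a.e., and $\overline{P}_t\tilde{f}\to\int_E\tilde{f}\,\text{d}\overline{\mu}$ both in $\mathbb{L}^2(\overline{\mu})$ and $\overline{\mu}$-a.e. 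Passing to $\liminf_{t\to\infty}$ in the left-hand inequality of the display and using the $\underline{\mu}$-a.e. convergence of $\underline{P}_t\tilde{f}$ gives $\liminf_{t\to\infty}P_t\tilde{f}\geq\int_E\tilde{f}\,\text{d}\underline{\mu}$, $\underline{\mu}$-a.e.; symmetrically, taking $\limsup_{t\to\infty}$ in the right-hand inequality gives $\limsup_{t\to\infty}P_t\tilde{f}\leq\int_E\tilde{f}\,\text{d}\overline{\mu}$, $\overline{\mu}$-a.e. These are exactly the first two displayed conclusions, the qualifiers ``in $\mathbb{L}^2(\underline{\mu})$'' and ``in $\mathbb{L}^2(\overline{\mu})$'' being carried over from Theorem~\ref{thm-main-2}.

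For the last assertion I would specialize to $\overline{\mu}=\underline{\mu}=\mu$ and set $m\coloneqq\int_E\tilde{f}\,\text{d}\mu$. The two one-sided bounds just obtained then read $m\leq\liminf_{t\to\infty}P_t\tilde{f}$ and $\limsup_{t\to\infty}P_t\tilde{f}\leq m$, $\mu$-a.e., hence $\lim_{t\to\infty}P_t\tilde{f}=m$ $\mu$-a.e. For the accompanying $\mathbb{L}^2(\mu)$ convergence I would use the elementary pointwise inequality $|b-m|\leq|a-m|+|c-m|$, valid whenever $a\leq b\leq c$, applied with $a=\underline{P}_t\tilde{f}$, $b=P_t\tilde{f}$, $c=\overline{P}_t\tilde{f}$; the triangle inequality in $\mathbb{L}^2(\mu)$ then gives $\|P_t\tilde{f}-m\|_{\mathbb{L}^2(\mu)}\leq\|\underline{P}_t\tilde{f}-m\|_{\mathbb{L}^2(\mu)}+\|\overline{P}_t\tilde{f}-m\|_{\mathbb{L}^2(\mu)}$, and both terms on the right tend to $0$ by Theorem~\ref{thm-main-2}.

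The argument is essentially routine once Theorem~\ref{thm-main-2} is available; the only delicate point is bookkeeping of the measure with respect to which each convergence (a.e. and $\mathbb{L}^2$) holds, because the limits for $\underline{\mathbf{P}}$ and $\overline{\mathbf{P}}$ live a priori in two different spaces $\mathbb{L}^2(\underline{\mu})$ and $\mathbb{L}^2(\overline{\mu})$. This is precisely why, in the unequal-measure case, one can only assert the two one-sided bounds and not convergence of $P_t\tilde{f}$ itself --- a generic $\mathbf{P}\in\mathbf{Q}$ may have yet another invariant measure, and no convergence of $P_t\tilde{f}$ is claimed there. I expect this measure-bookkeeping, rather than any genuine analytic difficulty, to be the main (and fairly mild) obstacle.
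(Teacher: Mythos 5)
Your proposal is correct and follows essentially the same route as the paper: the paper's own proof is the one-line observation that the claim is immediate from Theorem \ref{thm-main-2} together with $\underline{\mathbf{P}}$ and $\overline{\mathbf{P}}$ being the least and greatest elements of $\mathbf{Q}$, which is exactly the squeeze $\underline{P}_t\tilde{f}\leq P_t\tilde{f}\leq\overline{P}_t\tilde{f}$ you spell out. Your elaboration of the $\liminf$/$\limsup$ bookkeeping and the $\mathbb{L}^2(\mu)$ triangle-inequality step in the equal-measure case is a faithful (and more careful) expansion of what the paper leaves implicit.
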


\begin{proof}
    Immediate from Theorem \ref{thm-main-2}, and the fact that -- as a consequence of Lemma \ref{uniqueness} and Definition \ref{cpct-imsg-def} -- $\overline{\mathbf{P}}$ and $\underline{\mathbf{P}}$ are the greatest and least elements of the compact IMSG $\mathbf{Q}$, respectively.
\end{proof}

\section{Ergodicity in the General Case -- $\mathbf{Q}$ Not Necessarily Compact}\label{subsec:wass-sublinear}
%Curvature-driven sublinear extensions via Wasserstein contraction

%In this section, our goal is to relax the assumption of IMSG $\mathbf{Q}$ being compact, while still being able to study its ergodic behavior. To do so, we need to introduce an assumption.
In this section, we show that the long-term conclusions suggested by our imprecise Markov semigroup
viewpoint can be obtained \emph{without} requiring the existence of least/greatest semigroups in
$\mathbf Q$ (hence without any compactness assumption; We also do without the notational convention of Remark \ref{quotient-notation}). To do so, we need the following.

%we revisit the compactness-free construction of Section~\ref{subsec:sublinear}, but we replace total variation/Dobrushin contraction by a contraction that is ``natural'' from the Bakry-\'Emery point of view, namely \emph{1-Wasserstein/Lipschitz} contraction. This allows the geometry encoded by the carré du champ to enter the picture.

%{\bf Metric and measurability convention.} 
Let $(E,d)$ be a Polish metric space, $\mathcal F$ its Borel $\sigma$-algebra, and assume
\begin{equation}\label{eq:diam-ass}
\mathrm{diam}(E)\coloneqq \sup_{x,y\in E} d(x,y) \eqqcolon D <\infty.
\end{equation}
For instance, \eqref{eq:diam-ass} holds when $E$ is compact under $d$. 
%As in Section~\ref{subsec:sublinear},
To avoid measurability pathologies for pointwise suprema over uncountable index sets (and over switching times), we work with
the universal completion $\mathcal F^\star$ of $\mathcal{F}$, and interpret envelopes as universally measurable; 
We expand on this in Appendix \ref{rem:meas}. We require the following.

\begin{assumption}[Standard Borel Parametrization and Jointly Measurable Kernels]\label{ass:SB-wass}
The family $\mathbf Q$ satisfying \eqref{imsg-def-1} is parameterized as $\mathbf Q=\{\mathbf P^\theta\}_{\theta\in\Theta}$ for a standard
Borel space $\Theta$, where $\mathbf P^\theta=(P_t^\theta)_{t\ge0}$ is a Markov semigroup with transition
kernels $p_t^\theta(x,\mathrm dy)$. For every $A\in\mathcal F$, the map $(t,x,\theta)\mapsto p_t^\theta(x,A)$
is Borel measurable on $\mathbb{R}_+\times E\times\Theta$.
\end{assumption}

Assumption~\ref{ass:SB-wass} is a standard measurability hypothesis from stochastic control:
It rules out pathological nonmeasurable envelopes when taking the supremum over $\theta$ and when switching along partitions.
It is typically satisfied for parametrized SDE/semigroup families with measurable dependence on $\theta$, and imposes
no compactness requirements on $\Theta$.

Let $\Delta_E$ be the set of Borel probability measures on $(E,\mathcal F)$ and
$$\Delta_E^1\coloneqq\{\mu\in\Delta_E: \int_E d(x_0,x)\mu(\mathrm dx)<\infty\}$$ 
for some (hence any) $x_0\in E$.
Under \eqref{eq:diam-ass}, we have $\Delta_E^1=\Delta_E$.
For a bounded function $f:E\to\mathbb R$ define the oscillation
\[
\osc(f)\coloneqq \sup_{x\in E} f(x)-\inf_{x\in E} f(x).
\]
Denote by $\mathrm{Lip}_b(E)$ the space of bounded $d$-Lipschitz functions, equipped with the seminorm
\[
\mathrm{Lip}(f)\coloneqq \sup_{x\neq y}\frac{|f(x)-f(y)|}{d(x,y)}\in[0,\infty).
\]

Let us now recall the definition of the 1-Wasserstein metric; It is instrumental to the second assumption we need to study the ergodicity of a non-compact $\mathbf{Q}$.

\begin{definition}[1-Wasserstein Distance]\label{1-wass-def}
    For $\mu,\nu\in\Delta_E^1$, let $W_1(\mu,\nu)$ denote the 1-Wasserstein distance, characterized by the
Kantorovich-Rubinstein duality
\begin{equation}\label{eq:KR}
W_1(\mu,\nu)=\sup\left\lbrace{\int_E \varphi \mathrm d(\mu-\nu): \varphi\in\mathrm{Lip}_b(E), \mathrm{Lip}(\varphi)\le 1}\right\rbrace.
\end{equation}

\noindent Equivalently,
\[
W_1(\mu,\nu)=\sup\left\lbrace{\left|\int_E \varphi  \mathrm d(\mu-\nu)\right|: \varphi\in\mathrm{Lip}_b(E), \mathrm{Lip}(\varphi)\le 1}\right\rbrace,
\]
since $\varphi$ is admissible if and only if $-\varphi$ is admissible, that is, if $\varphi$ has Lipschitz constant $\leq 1$, then $-\varphi$ also has Lipschitz constant $\leq 1$, hence both are allowed as test functions in the supremum.
\end{definition}

\begin{assumption}[Uniform Exponential 1-Wasserstein Contraction]\label{ass:UEWC}
There exists $\lambda>0$ such that for every $\theta\in\Theta$ and every $t\ge0$,
\begin{equation}\label{eq:UEWC}
W_1(\mu P_t^\theta,\nu P_t^\theta)\le e^{-\lambda t} W_1(\mu,\nu),
\qquad \forall \mu,\nu\in\Delta_E^1,
\end{equation}
where $\mu P_t^\theta(A) \coloneqq \int_E p_t^\theta(x,A) \mu(\text{d}x)$, for all $A \in\mathcal{F}$, and similarly for $\nu P_t^\theta$.
\end{assumption}

%\begin{remark}[Curvature link]\label{rem:curv-link}
% Assumption~\ref{ass:UEWC} is a \emph{uniform exponential mixing/stability condition in $W_1$}:
% for every candidate model $\mathbf P^\theta$, the semigroup contracts distances between laws at rate
% $e^{-\lambda t}$, uniformly over $\theta$. 
% %Equivalently (by Lemma~\ref{lem:wass-lip-equiv}), the action on
% %bounded Lipschitz test functions contracts the Lipschitz seminorm at the same rate, so the dependence on the
% %initial state decays exponentially when measured in $W_1$ (or, dually, on bounded Lipschitz observables).

% In many diffusion settings, a Bakry-\'Emery curvature lower bound implies a contraction of the form
% \eqref{eq:UEWC} (possibly in $W_1$, 2-Wasserstein $W_2$, or for the intrinsic distance associated with the carré du champ),
% via gradient estimates and the duality between gradient bounds and Wasserstein contraction. 

Assumption~\ref{ass:UEWC} is a \emph{uniform exponential mixing/stability condition in the $1$-Wasserstein distance $W_1$}:
For every candidate model $\mathbf P^\theta$, the semigroup contracts distances between laws at rate
$e^{-\lambda t}$, uniformly over $\theta$.

In many diffusion settings, a Bakry-\'Emery curvature lower bound can yield a contraction of the form
\eqref{eq:UEWC} (possibly in $W_1$, in the $2$-Wasserstein distance $W_2$, or for the intrinsic distance associated with the carré du champ),
via gradient estimates and the duality between gradient bounds and Wasserstein contraction. The following is an example.

% \begin{example}[Ornstein-Uhlenbeck: a Curvature-Driven $W_1$ Contraction]\label{ex:OU-w1}
% Let $E=\mathbb{R}^d$ with the Euclidean distance $d(x,y)=\|x-y\|$ 
% and consider the Ornstein-Uhlenbeck generator
% \[
% Lf=\Delta f - x\cdot\nabla f,
% \]
% with Markov semigroup $(P_t)_{t\ge0}$. By the Mehler formula, for every bounded measurable $f$,
% \[
% P_t f(x)=\mathbb{E}\Big[f\big(e^{-t}x+\sqrt{1-e^{-2t}} Z\big)\Big],
% \qquad Z\sim N(0,I_d).
% \]
% If $f\in \mathrm{Lip}_b(E)$, then for all $x,y\in\mathbb{R}^d$,
% \[
% |P_t f(x)-P_t f(y)|
% \le \mathbb{E}\left[\left|f\big(e^{-t}x+\sqrt{1-e^{-2t}} Z\big)-f\big(e^{-t}y+\sqrt{1-e^{-2t}} Z\big)\right|\right]
% \le \mathrm{Lip}(f) e^{-t}\|x-y\|.
% \]
% Hence $\mathrm{Lip}(P_t f)\le e^{-t}\mathrm{Lip}(f)$ for all $t\ge0$. By Kantorovich-Rubinstein duality,
% this is equivalent to the $1$-Wasserstein contraction
% \[
% W_1(\mu P_t,\nu P_t)\le e^{-t}W_1(\mu,\nu),
% \qquad \forall \mu,\nu\in\Delta_E^1, \forall t\ge0,
% \]
% so Assumption~\ref{ass:UEWC} holds, here with $\lambda=1$. 
% \end{example}

\begin{example}[Ornstein-Uhlenbeck Family: A $\theta$-Parametrized Curvature-Driven $W_1$ Contraction]\label{ex:OU-w1}
Let $E=\mathbb{R}^d$  with Euclidean distance $d(x,y)=\|x-y\|$. Of course, $\mathbb{R}^d$ does not have finite diameter: Assumption \eqref{eq:diam-ass} is not needed for this example.
%\mengqi{The full $\mathbb{R}^d$ doesn't satisfy the finite-radius assumption \eqref{eq:diam-ass}. Maybe worth commenting that \eqref{eq:diam-ass} is not necessary for this example.}
Fix a parameter set $\Theta$ and, for each $\theta\in\Theta$, consider the Ornstein-Uhlenbeck generator
\[
L^\theta f(x)=\mathrm{tr} \left(a^\theta \nabla^2 f(x) \right)-\langle b^\theta x,\nabla f(x)\rangle,
\]
where $a^\theta$ is a symmetric positive semidefinite $d\times d$ matrix and $b^\theta$ is a $d\times d$ matrix
whose spectrum is contained in $(0,\infty)$ (so the drift is mean-reverting).
Let $\mathbf P^\theta=(P_t^\theta)_{t\ge0}$ be the associated Markov semigroup.

Equivalently, $\mathbf P^\theta$ is the transition semigroup of the linear SDE
\[
\mathrm dX_t=-b^\theta X_t \mathrm dt+\sqrt{2a^\theta} \mathrm dW_t,
\]
where $(W_t)_{t\ge0}$ is a standard $d$-dimensional Brownian motion, and the Mehler formula yields, for bounded measurable $f$,
\[
P_t^\theta f(x)=\mathbb E\Big[f\big(e^{-t b^\theta}x+Z_t^\theta\big)\Big],
\qquad
Z_t^\theta\sim \mathcal N \big(0,\Sigma_t^\theta\big),
\]
where $\mathcal N \big(0,\Sigma_t^\theta\big)$ denotes a Gaussian distribution with mean $0$ and covariance 
$$\Sigma_t^\theta=\int_0^t e^{-s b^\theta}(2a^\theta)e^{-s(b^\theta)^\top} \mathrm ds.$$

If $f\in \mathrm{Lip}_b(E)$, then for all $x,y\in\mathbb{R}^d$,
\[
|P_t^\theta f(x)-P_t^\theta f(y)|
\le \mathbb E\Big[\big|f(e^{-t b^\theta}x+Z_t^\theta)-f(e^{-t b^\theta}y+Z_t^\theta)\big|\Big]
\le \mathrm{Lip}(f) \|e^{-t b^\theta}(x-y)\|.
\]
Hence
\[
\mathrm{Lip}(P_t^\theta f)\le \|e^{-t b^\theta}\|_{\mathrm{op}} \mathrm{Lip}(f)\qquad\forall t\ge0,
\]
where $\|\cdot\|_{\mathrm{op}}$ denotes the operator norm. If, in addition, there exists $\lambda>0$ such that
\[
\sup_{\theta\in\Theta}\|e^{-t b^\theta}\|_{\mathrm{op}}\le e^{-\lambda t}\qquad\forall t\ge0,
\]
(e.g. it suffices that $\inf_{\theta\in\Theta}\lambda_{\min} \big((b^\theta+(b^\theta)^\top)/2\big)\ge \lambda$),
then for all $\theta\in\Theta$ and $t\ge0$,
\[
\mathrm{Lip}(P_t^\theta f)\le e^{-\lambda t}\mathrm{Lip}(f).
\]
By Kantorovich-Rubinstein duality, this is equivalent to the uniform $1$-Wasserstein contraction
\[
W_1(\mu P_t^\theta,\nu P_t^\theta)\le e^{-\lambda t}W_1(\mu,\nu),
\qquad \forall \mu,\nu\in\Delta_E^1, \forall t\ge0, \forall \theta\in\Theta,
\]
i.e. Assumption~\ref{ass:UEWC} holds with such a  $\lambda$.
\end{example}

% In this section
% we take \eqref{eq:UEWC} 
% %as the quantitative hypothesis replacing compactness of $\mathbf{Q}$.
% as a compactness-free replacement for the existence of
% extremal semigroups: it is strong enough to yield uniform ergodic control (hence a robust limit), and it is
% verifiable in concrete model classes via standard minorization/coupling techniques.

Example \ref{ex:OU-w1} can be viewed as a
prototype of the general principle ``Bakry-\'Emery curvature lower bounds $\implies$ gradient estimates for $(P_t)_{t\ge0}$
$\implies$ Wasserstein contraction of  $\mu\mapsto \mu P_t$'', with a precise equivalence between suitable gradient bounds and $W_p$-contraction given by
Kuwada-type duality \citep{kuwada2010,savare2014}. We formalize such a principle in Lemma \ref{lem:BE-implies-UEWC}.

In this section, we take \eqref{eq:UEWC} as a compactness-free replacement for the existence of extremal semigroups:
It is strong enough to yield uniform ergodic control (hence a robust limit) for the sublinear Nisio semigroup introduced in Section \ref{nisio-section}.
Moreover, \eqref{eq:UEWC} is verifiable in concrete model classes via metric contractive couplings or,
in diffusion settings, via gradient estimates (e.g.  stemming from Bakry-\'Emery curvature lower bounds) and
their dual formulation in Wasserstein distance.
%\end{remark}
We now record the dual (Lipschitz) form of Assumption~\ref{ass:UEWC}.

\begin{lemma}[Wasserstein Contraction $\iff$ Lipschitz Contraction]\label{lem:wass-lip-equiv}
We have that \eqref{eq:UEWC} holds if and only if
\begin{equation}\label{eq:lip-contract}
\mathrm{Lip}(P_t^\theta f)\le e^{-\lambda t} \mathrm{Lip}(f),
\qquad \forall f\in\mathrm{Lip}_b(E), \forall \theta\in\Theta, \forall t\ge0.
\end{equation}
\end{lemma}

\begin{proof}
Assume \eqref{eq:UEWC}. Fix $\theta,t$ and $f\in\mathrm{Lip}_b(E)$.
For $x\neq y$, by \eqref{eq:KR} and \eqref{eq:UEWC},
\[
|P_t^\theta f(x)-P_t^\theta f(y)|
=\left|\int_E f \mathrm d(\delta_x P_t^\theta-\delta_y P_t^\theta)\right|
\le \mathrm{Lip}(f) W_1(\delta_x P_t^\theta,\delta_y P_t^\theta)
\le \mathrm{Lip}(f) e^{-\lambda t} d(x,y).
\]
Taking the supremum over $x\neq y$ gives \eqref{eq:lip-contract}.

Conversely, assume \eqref{eq:lip-contract}. Fix $\mu,\nu\in\Delta_E^1$ and $\varphi\in\mathrm{Lip}_b(E)$ with
$\mathrm{Lip}(\varphi)\le 1$. Then, by \eqref{eq:lip-contract}, $\mathrm{Lip}(P_t^\theta \varphi)\le e^{-\lambda t}$ and
\[
\left|\int_E \varphi \mathrm d(\mu P_t^\theta-\nu P_t^\theta)\right|
=\left|\int_E (P_t^\theta\varphi) \mathrm d(\mu-\nu)\right|
\le \mathrm{Lip}(P_t^\theta\varphi) W_1(\mu,\nu)
\le e^{-\lambda t} W_1(\mu,\nu).
\]
Taking the supremum over $\varphi$ with $\mathrm{Lip}(\varphi)\le 1$ and using \eqref{eq:KR} yields \eqref{eq:UEWC}.
\end{proof}

The next result uses the equivalence in Lemma \ref{lem:wass-lip-equiv} to show that a (uniform) Bakry-Émery curvature condition on $\mathbf{Q}$ implies our Assumption \ref{ass:UEWC}.

\begin{lemma}[Bakry-\'Emery Curvature $\implies$ $W_1$-Contraction]\label{lem:BE-implies-UEWC}
Let $(E,\mu,\Gamma)$ be a diffusion Markov triple with Markov semigroup $(P_t)_{t\ge 0}$ and generator $L$.
Let $\mathcal A$ be an algebra of bounded functionals, stable under products and invariant under $(P_t)_{t\ge0}$,
such that $\Gamma$ is well-defined on $\mathcal A$.
Define the \emph{intrinsic (pseudo-)distance} on $E$ associated with $\Gamma$ by
\begin{equation}\label{eq:intrinsic-distance}
d_\Gamma(x,y)
\coloneqq
\sup\Bigl\{u(x)-u(y): u\in\mathcal A, \|\sqrt{\Gamma(u)}\|_\infty\le 1\Bigr\}\in[0,\infty].
\end{equation}
Assume that $d_\Gamma$ is finite and separates points, so that $(E,d_\Gamma)$ is a metric space, and that
$(E,d_\Gamma)$ is Polish with Borel $\sigma$-algebra $\mathcal F$. Assume also the following density condition,
\begin{align}\label{density-assumption}
\forall \varphi\in \mathrm{Lip}_b(E,d_\Gamma) \text{ with } \mathrm{Lip}_{d_\Gamma}(\varphi)\le 1,
\exists (f_n)_{n\ge 1}\subset \Bigl\{ f\in\mathcal A: \|\sqrt{\Gamma(f)}\|_\infty\le 1\Bigr\} \text{ such that } \|f_n-\varphi\|_\infty\to 0.
\end{align}

For $\nu_1,\nu_2$ Borel probability measures with finite first moment with respect to  $d_\Gamma$, let
$W_{1,d_\Gamma}(\nu_1,\nu_2)$ denote the $1$-Wasserstein distance associated with $d_\Gamma$
-- equivalently given by the Kantorovich-Rubinstein duality on $(E,d_\Gamma)$.

If the Bakry-\'Emery curvature condition $CD(\rho,\infty)$ holds for some $\rho\in\mathbb R$, then for all
$t\ge0$ and all such $\nu_1,\nu_2$,
\begin{equation}\label{eq:W1-contract-intrinsic}
W_{1,d_\Gamma}(\nu_1P_t,\nu_2P_t)\le e^{-\rho t} W_{1,d_\Gamma}(\nu_1,\nu_2).
\end{equation}

In particular, for an imprecise Markov semigroup
$\mathbf Q=\{\mathbf P^\theta=(P_t^\theta)_{t\ge0}\}_{\theta\in\Theta}$, assume that for each $\theta$ we have a
diffusion Markov triple $(E,\mu_\theta,\Gamma_\theta)$ with intrinsic distance $d_{\Gamma_\theta}$, and that
\[
d_{\Gamma_\theta}=d_\Gamma\quad\text{for all }\theta\in\Theta
\qquad\text{and}\qquad
\inf_{\theta\in\Theta}\rho_\theta\ge \lambda>0,
\]
where $CD(\rho_\theta,\infty)$ holds for the $\theta$-model. Then, Assumption~\ref{ass:UEWC} holds on $(E,d_\Gamma)$
with rate $\lambda$.
\end{lemma}

Assumption \eqref{density-assumption} is standard density property: It holds in many standard settings. For example, in the Riemannian manifold case it follows from smooth approximation of Lipschitz functions with almost-preserved Lipschitz constant \citep{AZAGRA20071370}.

\begin{proof}[Proof of Lemma \ref{lem:BE-implies-UEWC}]
\emph{Step 1: Gradient Estimate under $CD(\rho,\infty)$.}
Assume that the Bakry-\'Emery curvature condition $CD(\rho,\infty)$ holds.
Then the Bakry-\'Emery gradient estimate holds on $\mathcal A$: for all $t\ge 0$ and all $f\in\mathcal A$,
\begin{equation}\label{eq:BE-grad-est-intrinsic}
\Gamma(P_t f)\le e^{-2\rho t}  P_t\Gamma(f)
\qquad \mu\text{-a.e.}
\end{equation}

\emph{Step 2: Contraction of the $\Gamma$-Seminorm.}
If $\|\sqrt{\Gamma(f)}\|_\infty\le 1$, then $0\le \Gamma(f)\le 1$ $\mu$-a.e., hence by positivity and
constant-preservation of $P_t$,
\[
0\le P_t\Gamma(f)\le 1\quad \mu\text{-a.e.}
\]
Combining this with \eqref{eq:BE-grad-est-intrinsic} yields
\[
\Gamma(P_t f)\le e^{-2\rho t}\quad \mu\text{-a.e.},
\qquad\text{so}\qquad
\|\sqrt{\Gamma(P_t f)}\|_\infty\le e^{-\rho t}.
\]
Equivalently, if $\|\sqrt{\Gamma(f)}\|_\infty\le 1$, then
\begin{equation}\label{eq:Gamma-unitball-invariance}
\Bigl\|\sqrt{\Gamma\bigl(e^{\rho t}P_t f\bigr)}\Bigr\|_\infty\le 1.
\end{equation}

\emph{Step 3: Intrinsic Lipschitz Bound.}
We claim that for every $g\in\mathcal A$,
\begin{equation}\label{eq:Lip-intrinsic-bound}
\mathrm{Lip}_{d_\Gamma}(g)\le \|\sqrt{\Gamma(g)}\|_\infty.
\end{equation}
Indeed, if $\|\sqrt{\Gamma(g)}\|_\infty<\infty$, then for any $x,y\in E$ the function
$u\coloneqq g/\|\sqrt{\Gamma(g)}\|_\infty$ satisfies $\|\sqrt{\Gamma(u)}\|_\infty\le 1$, hence by the definition
\eqref{eq:intrinsic-distance} of $d_\Gamma$,
\[
\frac{g(x)-g(y)}{\|\sqrt{\Gamma(g)}\|_\infty}=u(x)-u(y)\le d_\Gamma(x,y).
\]
Swapping $x,y$ gives $|g(x)-g(y)|\le \|\sqrt{\Gamma(g)}\|_\infty  d_\Gamma(x,y)$, proving
\eqref{eq:Lip-intrinsic-bound}.

As a consequence of \eqref{eq:Lip-intrinsic-bound} and Step 2, for every $f\in\mathcal A$ with
$\|\sqrt{\Gamma(f)}\|_\infty<\infty$ we have the estimate
\begin{equation}\label{eq:Lip-contract-correct}
\mathrm{Lip}_{d_\Gamma}(P_t f)\le \|\sqrt{\Gamma(P_t f)}\|_\infty
\le e^{-\rho t} \|\sqrt{\Gamma(f)}\|_\infty.
\end{equation}

\emph{Step 4: $W_{1,d_\Gamma}$-Contraction via Kantorovich-Rubinstein Duality and a Core Density.}
Let
\[
\mathsf F \coloneqq \Bigl\{ f\in\mathcal A: \|\sqrt{\Gamma(f)}\|_\infty\le 1\Bigr\}.
\]
By \eqref{eq:Lip-intrinsic-bound}, every $f\in\mathsf F$ is $1$-Lipschitz w.r.t. $d_\Gamma$, hence is an
admissible test function in the Kantorovich-Rubinstein duality for $W_{1,d_\Gamma}$.

Fix $\nu_1,\nu_2$ with finite first moment with respect to $d_\Gamma$. For any $f\in\mathsf F$, define
$g_t \coloneqq e^{\rho t}P_t f$. By \eqref{eq:Gamma-unitball-invariance} we have $g_t\in\mathsf F$, and therefore
\[
\int_E P_t f  \mathrm d(\nu_1-\nu_2)
= e^{-\rho t}\int_E g_t  \mathrm d(\nu_1-\nu_2)
\le e^{-\rho t} W_{1,d_\Gamma}(\nu_1,\nu_2),
\]
where the last inequality follows from \eqref{eq:KR} since $\mathrm{Lip}_{d_\Gamma}(g_t)\le 1$.
Taking the supremum over $f\in\mathsf F$ yields
\begin{equation}\label{eq:sup-over-F}
\sup_{f\in\mathsf F}\int_E P_t f \mathrm d(\nu_1-\nu_2)
\le e^{-\rho t} W_{1,d_\Gamma}(\nu_1,\nu_2).
\end{equation}

To pass from the restricted supremum to the full Kantorovich-Rubinstein supremum, we use Assumption \eqref{density-assumption}. Let $\varphi\in \mathrm{Lip}_b(E,d_\Gamma)$ with $\mathrm{Lip}_{d_\Gamma}(\varphi)\le 1$ and pick $f_n\in\mathsf F$
with $\|f_n-\varphi\|_\infty\to 0$. Since $P_t$ is a contraction on $L^\infty$, we have that
$\|P_t f_n-P_t\varphi\|_\infty\le \|f_n-\varphi\|_\infty\to 0$. Hence, from \eqref{eq:sup-over-F} it follows that 
\[
\int_E P_t\varphi \mathrm d(\nu_1-\nu_2)
=\lim_{n\to\infty}\int_E P_t f_n \mathrm d(\nu_1-\nu_2)
\le e^{-\rho t} W_{1,d_\Gamma}(\nu_1,\nu_2).
\]
Taking the supremum over all such $\varphi$ and using
Kantorovich-Rubinstein duality \eqref{eq:KR} (with respect to $d_\Gamma$) gives
\[
W_{1,d_\Gamma}(\nu_1P_t,\nu_2P_t)\le e^{-\rho t} W_{1,d_\Gamma}(\nu_1,\nu_2),
\]
which is \eqref{eq:W1-contract-intrinsic}.

\emph{Final statement for $\mathbf Q$.}
For an imprecise Markov semigroup
$\mathbf Q=\{\mathbf P^\theta=(P_t^\theta)_{t\ge0}\}_{\theta\in\Theta}$, apply the above argument to each $\theta$.
If $d_{\Gamma_\theta}=d_\Gamma$ for all $\theta$ and $\inf_{\theta\in\Theta}\rho_\theta\ge \lambda>0$, then
\eqref{eq:W1-contract-intrinsic} holds with rate $\lambda$ uniformly in $\theta$, which is exactly
Assumption~\ref{ass:UEWC} on $(E,d_\Gamma)$.
\end{proof}

The key structural requirement in Lemma \ref{lem:BE-implies-UEWC} is the compatibility between the carré du champ and the metric, here encoded by the intrinsic distance $d_\Gamma$ -- or more generally by a domination of $\sqrt{\Gamma(f)}$ in terms of $\mathrm{Lip}(f)$. This is not overly restrictive: It is automatically satisfied in the classical Riemannian setting (where $\Gamma(f)=|\nabla f|^2$ and $d_\Gamma$ coincides with the geodesic distance) and, more broadly, for strongly local regular Dirichlet forms where $d_\Gamma$ yields the natural geometry of the diffusion. In such cases, a curvature lower bound $CD(\rho,\infty)$ directly translates into exponential contraction in $W_1$ for the intrinsic metric.

We also prove that a contraction in the Lipschitz seminorm implies contraction of oscillations.

\begin{lemma}[Lipschitz Contraction Implies Oscillation Contraction]\label{lem:lip-to-osc}
Assume \eqref{eq:diam-ass}. Then, for every $f\in\mathrm{Lip}_b(E)$,
\[
\osc(f)\le D \mathrm{Lip}(f).
\]
In particular, under Assumption~\ref{ass:UEWC}, for every $\theta\in\Theta$ and $t\ge0$,
\[
\osc(P_t^\theta f)\le D e^{-\lambda t}\mathrm{Lip}(f),\qquad f\in\mathrm{Lip}_b(E).
\]
\end{lemma}

\begin{proof}
For any $x,y\in E$, $|f(x)-f(y)|\le \mathrm{Lip}(f) d(x,y)\le D \mathrm{Lip}(f)$. Taking $\sup_{x,y}$ gives
$\osc(f)\le D \mathrm{Lip}(f)$. The second claim follows from Lemma~\ref{lem:wass-lip-equiv}.
\end{proof}

We now turn to the two complementary extensions that avoid compactness of $\mathbf{Q}$. First, in Section \ref{static-env}, we analyze \emph{static} upper/lower
envelopes (optimizing only at the terminal time), which yield ergodic limits without any attainment. That is, without needing the supremum/infimum to be achieved by some $\theta^\star \in \Theta$, or equivalently any ``extremal'' semigroup/model.

Second, in Section \ref{nisio-section} we
construct \emph{time-consistent} (Nisio) nonlinear semigroups by allowing switching, thus going beyond Definition~\ref{cpct-imsg-def}  which explicitly excludes switches, and prove their robust ergodicity
under uniform contraction.

In both sections we retain Assumptions \ref{ass:SB-wass} and \ref{ass:UEWC}. By Lemma \ref{lem:BE-implies-UEWC}, the latter can be ensured by asking the elements of $\mathbf{Q}$ to all have positive Bakry-Émery curvature condition. This viewpoint gives the present analysis a geometric flavor, as illustrated for instance by
the Ornstein-Uhlenbeck family in Example~\ref{ex:OU-w1}.

\subsection{Static Envelopes: Robust Limits on $\mathrm{Lip}_b(E)$}\label{static-env}

For $f\in\mathrm{Lip}_b(E)$ define the static upper and lower envelopes
\begin{equation}\label{eq:static-env-wass}
\overline T_t f(x)\coloneqq \sup_{\theta\in\Theta} P_t^\theta f(x),
\qquad
\underline T_t f(x)\coloneqq \inf_{\theta\in\Theta} P_t^\theta f(x),
\qquad t\ge0,
\end{equation}
interpreted as $\mathcal F^\star$-measurable envelopes -- see Appendix \ref{rem:meas}, where the interested reader can find a discussion on the measurability of $x \mapsto \overline{T}_tf(x)$ and $x \mapsto \underline{T}_tf(x)$, and of the switching operators introduced in Definitions \ref{def:nisio-wass} and 
\ref{def:nisio-lower}. 

In Imprecise Probability terminology, given a set $\mathcal{P}$ of probability measures on $(E,\mathcal F)$, the
\emph{upper} and \emph{lower} expectations of a bounded observable $f$ are defined by
\[
\overline{\mathbb E}(f)\coloneqq \sup_{P\in\mathcal P}\int_E f \mathrm d P,
\qquad
\underline{\mathbb E}(f)\coloneqq \inf_{P\in\mathcal P}\int_E f \mathrm d P.
\]
The operators in \eqref{eq:static-env-wass} can be viewed as a \emph{dynamic} analogue of this construction: For fixed $t\ge0$ and
initial state $x\in E$, each candidate model $\theta$ induces the law $\delta_x P_t^\theta$ of $X_t$ under $\mathbf P^\theta$,
so that $P_t^\theta f(x)=\int_E f \mathrm d(\delta_x P_t^\theta)$ is the corresponding (precise) expectation.
Thus, the envelopes
\[
\overline T_t f(x)=\sup_{\theta\in\Theta} P_t^\theta f(x)
=\sup_{\theta\in\Theta}\int_E f \mathrm d(\delta_x P_t^\theta),
\qquad
\underline T_t f(x)=\inf_{\theta\in\Theta}\int_E f \mathrm d(\delta_x P_t^\theta),
\]
are exactly the upper and lower expectations of $f$ with respect to the time-$t$  set
$\mathcal P_t(x)\coloneqq \{\delta_x P_t^\theta: \theta\in\Theta\}$ of one-time marginals.

%(as in Remark~\ref{rem:meas} of Appendix~\ref{subsec:sublinear}). 
We first prove a model-wise ergodic estimate that will later be used to identify the upper and lower bounds for the long-term behavior of the envelopes associated with the imprecise Markov semigroup $\mathbf{Q}$.

\begin{proposition}[Unique Invariant Law and Uniform Ergodic Bound in $W_1$]\label{prop:unique-inv-wass}
Let Assumptions~\ref{ass:SB-wass} and \ref{ass:UEWC} hold, and assume \eqref{eq:diam-ass}. Fix $\theta\in\Theta$.
Then, there exists a unique invariant probability measure $\mu_\theta\in\Delta_E$ such that for all $t\ge0$,
\begin{equation}\label{eq:w1-erg}
W_1(\delta_x P_t^\theta,\mu_\theta)\le D e^{-\lambda t},\qquad \forall x\in E.
\end{equation}
Consequently, for every $f\in\mathrm{Lip}_b(E)$,
\begin{equation}\label{eq:unif-erg-lip}
\sup_{x\in E}\bigl|P_t^\theta f(x)-\mu_\theta(f)\bigr|
\le D e^{-\lambda t}\mathrm{Lip}(f),
\end{equation}
where $\mu_\theta(f) \coloneqq \int_E f \text{d} \mu_\theta$.
\end{proposition}

\begin{proof}
Fix $\theta$ and $t_0>0$. By \eqref{eq:UEWC}, the map $\Phi:\Delta_E^1\to\Delta_E^1$,
$\Phi(\nu)=\nu P_{t_0}^\theta$, is a strict contraction in $W_1$ with constant $e^{-\lambda t_0}<1$.
Since $(\Delta_E^1,W_1)$ is complete and $\Delta_E^1=\Delta_E$ under \eqref{eq:diam-ass}, Banach's fixed point
theorem yields a unique fixed point $\mu_\theta$ and $\nu P_{n t_0}^\theta\to\mu_\theta$ in $W_1$.

%As in Proposition~\ref{prop:unique-inv} of Appendix~\ref{subsec:sublinear}, 
Notice that invariance extends to all $t\ge0$:
for any $s\ge0$, $\mu_\theta P_s^\theta$ is also a fixed point of $\Phi$ (by the semigroup property), hence equals
$\mu_\theta$ by uniqueness.

For \eqref{eq:w1-erg}, by \eqref{eq:UEWC} and invariance,
\[
W_1(\delta_x P_t^\theta,\mu_\theta)
= W_1(\delta_x P_t^\theta,\mu_\theta P_t^\theta)
\le e^{-\lambda t} W_1(\delta_x,\mu_\theta).
\]
Under \eqref{eq:diam-ass}, $W_1(\delta_x,\mu_\theta)=\int_E d(x,y)\mu_\theta(\mathrm dy)\le D$, yielding
\eqref{eq:w1-erg}. Finally, \eqref{eq:unif-erg-lip} follows from \eqref{eq:KR},
\[
|P_t^\theta f(x)-\mu_\theta(f)|
=\left|\int_E f \mathrm d(\delta_x P_t^\theta-\mu_\theta)\right|
\le \mathrm{Lip}(f) W_1(\delta_x P_t^\theta,\mu_\theta)
\le D e^{-\lambda t}\mathrm{Lip}(f).
\]
\end{proof}

We are now ready to study the ergodicity of $\mathbf{Q}$.

\begin{theorem}[Static Envelope Limits Under Wasserstein Contraction]\label{thm:static-limits-wass}
Let Assumptions~\ref{ass:SB-wass} and \ref{ass:UEWC} hold, and assume \eqref{eq:diam-ass}. Then for every
$f\in\mathrm{Lip}_b(E)$,
\begin{align}
\lim_{t\to\infty}\overline T_t f(x) &= \sup_{\theta\in\Theta}\mu_\theta(f),\label{eq:lim-upper-wass}\\
\lim_{t\to\infty}\underline T_t f(x) &= \inf_{\theta\in\Theta}\mu_\theta(f),\label{eq:lim-lower-wass}
\end{align}
and the convergence is uniform in $x$. Quantitatively,
\[
\sup_{x\in E}\Bigl|\overline T_t f(x)-\sup_{\theta\in\Theta}\mu_\theta(f)\Bigr|
\le D e^{-\lambda t}\mathrm{Lip}(f),
\qquad
\sup_{x\in E}\Bigl|\underline T_t f(x)-\inf_{\theta\in\Theta}\mu_\theta(f)\Bigr|
\le D e^{-\lambda t}\mathrm{Lip}(f).
\]
\end{theorem}

\begin{proof}
By Proposition~\ref{prop:unique-inv-wass}, for every $\theta$ and $x\in E$,
\[
P_t^\theta f(x)=\mu_\theta(f)+\varepsilon_{\theta,t}(x),
\qquad
|\varepsilon_{\theta,t}(x)|\le D e^{-\lambda t}\mathrm{Lip}(f).
\]
Taking the supremum (resp. infimum) over $\theta$ yields the desired bounds and limits.
\end{proof}

%Of course, we have the following corollary.

Similarly to Corollaries \ref{limiting-cor} and \ref{limiting-cor2}, the following is an important result that subsumes the ergodic behavior of the elements of an IMSG $\mathbf{Q}$ that is not assumed to be compact, and that holds for all bounded Lipschitz functions on a finite-diameter Polish metric state space $E$. It follows immediately from Theorem \ref{thm:static-limits-wass}.

\begin{corollary}[Pointwise Ergodic Limit for Each Model]\label{cor:pointwise-erg-wass}
Assume the hypotheses of Proposition~\ref{prop:unique-inv-wass}. Then, for every
$f\in\mathrm{Lip}_b(E)$ and every $\theta\in\Theta$,
\[
\lim_{t\to\infty} P_t^\theta f(x)=\mu_\theta(f)\qquad \text{uniformly in }x\in E.
\]
In particular, for all $\theta\in\Theta$ and all $x\in E$,
\[
\lim_{t\to\infty} P_t^\theta f(x)\in
\left[\inf_{\theta\in\Theta}\mu_\theta(f), \sup_{\theta\in\Theta}\mu_\theta(f)\right].
\]
\end{corollary}

% \begin{proof}
%     Immediate from Theorem \ref{thm:static-limits-wass}.
% \end{proof}

\subsection{A Time-Consistent Sublinear Semigroup (Nisio) and Robust Ergodicity}\label{nisio-section}

Notice that the static envelopes $\overline T_t$ in Section \ref{static-env} need not satisfy the semigroup property. That is, it need not hold that for all $t,s\geq 0$, $\overline T_{t+s} = \overline T_t \circ \overline T_s$.\footnote{The same holds for the static lower envelopes $\underline T_t$.} We now build a time-consistent sublinear semigroup by switching along partitions, which satisfies the semigroup property.

\begin{definition}[Time-Consistent Switching, Nisio Semigroup]\label{def:nisio-wass}
For $t\ge0$ and $f\in\mathrm{Lip}_b(E)$, define the one-step upper envelope $\overline T_t f$ as in
\eqref{eq:static-env-wass}. Let $\Pi_t$ be the set of all finite partitions $\pi=(t_0,\dots,t_n)$ of $[0,t]$
with $0=t_0<\cdots<t_n=t$. For $\pi=(t_0,\dots,t_n)\in\Pi_t$, define
\[
T_t^\pi f \coloneqq \overline T_{t_n-t_{n-1}}\circ\cdots\circ \overline T_{t_1-t_0} f,
\]
and define the pasted semigroup $(T_t)_{t\ge0}$ by the envelope
\begin{equation}\label{eq:Tt-def-wass}
T_t f \coloneqq \sup_{\pi\in\Pi_t} T_t^\pi f,\qquad t\ge0,
\end{equation}
interpreted as a universally measurable envelope.
\end{definition}

The static envelopes $\overline T_t f(x)=\sup_{\theta\in\Theta}P_t^\theta f(x)$ and $\underline T_t f(x)=\inf_{\theta\in\Theta}P_t^\theta f(x)$
should be viewed as the \emph{one-step} upper and lower expectations of the observable $f(X_t)$ under a  family of models:
Starting from $X_0=x$, the set of candidate one-time marginals at time $t$ is
$\{\delta_x P_t^\theta:\theta\in\Theta\}$, so $\overline T_t f(x)$ (resp. $\underline T_t f(x)$) is the upper (resp. lower) expectation of $f$
over that set. The Nisio (switching) semigroup $(T_t)_{t\ge0}$ refines this viewpoint by allowing \emph{time-consistent} recombination of models:
Informally, $T_t f(x)$ corresponds to optimizing over \emph{adapted switching rules} that select, over successive time intervals, which semigroup
$P^\theta$ governs the evolution, yielding a dynamic programming principle $T_{t+s}=T_t\circ T_s$. In this sense, $(T_t)$ plays the role of a
\emph{dynamic} upper expectation (and its lower analogue of a dynamic lower expectation), exactly as in robust control.

This dynamic upper/lower expectation perspective is closely related to {Choquet} capacities: If we encode ambiguity at time $t$
by set functions on events,
\[
\overline{\mathsf P}_t(A\mid x)\coloneqq \sup_{\theta\in\Theta} (\delta_x P_t^\theta)(A),
\qquad
\underline{\mathsf P}_t(A\mid x)\coloneqq \inf_{\theta\in\Theta} (\delta_x P_t^\theta)(A),
\qquad A\in\mathcal F,
\]
then for bounded measurable $f$ the corresponding upper/lower expectations can be expressed via the associated (upper/lower) Choquet expectations.
In general, however, the switching/Nisio operators $T_t$ and their lower counterparts are best understood as \emph{time-consistent} (sub/super)linear
envelopes of conditional expectations (given by iterated one-step suprema/infima along partitions), rather than as integration with respect to a
single fixed capacity on path space; Allowing switches typically leads to a nonlinear, dynamically consistent evaluation that subsumes (and may be
strictly richer than) the one-step Choquet upper/lower expectations at fixed horizons.

The Nisio Semigroup satisfies the semigroup property.

\begin{remark}[Semigroup Property]\label{rem:nisio-dpp-wass}
Under Assumption~\ref{ass:SB-wass}, the family $(T_t)_{t\ge0}$ is the (upper) Nisio semigroup associated with
$(\overline T_t)_{t\ge0}$ and satisfies the semigroup property (sometimes referred to as the dynamic programming principle),
\[
T_{t+s}=T_t\circ T_s,\qquad \forall t,s\ge0.
\]
To see this, we refer the reader to standard treatments of Nisio semigroups/stochastic control such as
\citet[Chapter~III]{fleming_soner}.
%or \citet[Chapter~7]{bertsekas_shreve}.
\end{remark}

The switching construction yields a \emph{time-consistent} nonlinear evolution: It satisfies the dynamic programming principle
and acts as a coherent (sublinear) upper expectation on test functions. We summarize the resulting semigroup and
regularity properties in the next proposition.

\begin{proposition}[Sublinear Markov Semigroup on $\mathrm{Lip}_b(E)$]\label{prop:sublinear-semigroup-wass}
The family $\mathbf T=(T_t)_{t\ge0}$ of Definition~\ref{def:nisio-wass} satisfies
\begin{itemize}
\item[(i)] $T_0=\mathrm{Id}$ on $\mathrm{Lip}_b(E)$;
\item[(ii)] $T_{t+s}=T_t\circ T_s$ for all $t,s\ge0$;
\item[(iii)] Each $T_t$ is monotone, constant-preserving, sublinear, and translation invariant:
$T_t(f+c)=T_t f+c$ for all $c\in\mathbb R$;
\item[(iv)] $\|T_t f-T_t g\|_\infty\le \|f-g\|_\infty$ for all $f,g \in \mathrm{Lip}_b(E)$.
\end{itemize}
\end{proposition}

\begin{proof}
(i) For $t=0$, $\Pi_0$ contains only $\pi=(0)$, hence $T_0^\pi f=f$ and $T_0 f=f$.

(ii) This is the semigroup property, see Remark~\ref{rem:nisio-dpp-wass}.

(iii) Each $\overline T_u$ is monotone, constant-preserving, sublinear and translation invariant (as a pointwise
supremum of linear Markov operators). These properties are preserved under composition (thus for $T_t^\pi$) and
under pointwise suprema over $\pi$, hence they hold for $T_t$.

(iv) If $\|f-g\|_\infty\le \varepsilon$, then $g-\varepsilon\le f\le g+\varepsilon$. By monotonicity and translation
invariance, $T_t g-\varepsilon\le T_t f\le T_t g+\varepsilon$, yielding the claim.
\end{proof}

To obtain long-term stability and an ergodic limit for the Nisio semigroup, we need a quantitative regularization property.
The next lemma shows that the one-step Lipschitz/Wasserstein contraction propagates through switching, yielding the same
contraction (hence oscillation decay) for $(T_t)_{t\ge0}$. As a byproduct, the next lemma shows also that $T_t$ maps $\mathrm{Lip}_b(E)$ into $\mathrm{Lip}_b(E)$, hence $T_t f$ is in fact Borel measurable.

\begin{lemma}[Lipschitz contraction for $\overline T_t$ and $T_t$]\label{lem:nisio-lip}
Let Assumption~\ref{ass:UEWC} hold. Then, for all $t\ge0$ and $f\in\mathrm{Lip}_b(E)$,
\begin{equation}\label{eq:T-lip}
\mathrm{Lip}(\overline T_t f)\le e^{-\lambda t}\mathrm{Lip}(f),
\qquad
\mathrm{Lip}(T_t f)\le e^{-\lambda t}\mathrm{Lip}(f).
\end{equation}
Consequently, under \eqref{eq:diam-ass},
\begin{equation}\label{eq:T-osc-wass}
\osc(T_t f)\le D e^{-\lambda t}\mathrm{Lip}(f).
\end{equation}
\end{lemma}

\begin{proof}
Fix $t\ge0$ and $f\in\mathrm{Lip}_b(E)$. For each $\theta$, Lemma~\ref{lem:wass-lip-equiv} gives
$\mathrm{Lip}(P_t^\theta f)\le e^{-\lambda t}\mathrm{Lip}(f)$. Since the pointwise supremum of $L$-Lipschitz functions is
$L$-Lipschitz,\footnote{Indeed, for any $x,y\in E$, $\sup_i u_i(x)-\sup_i u_i(y)\le \sup_i\bigl(u_i(x)-u_i(y)\bigr)\le L d(x,y)$, and swapping $x,y$ gives $|\sup_i u_i(x)-\sup_i u_i(y)|\le L d(x,y)$.
} $\overline T_t f=\sup_\theta P_t^\theta f$ satisfies
$\mathrm{Lip}(\overline T_t f)\le e^{-\lambda t}\mathrm{Lip}(f)$. 

For $T_t$, fix $\pi=(t_0,\dots,t_n)\in\Pi_t$.
Iterating the previous bound along the composition defining $T_t^\pi$ yields
$\mathrm{Lip}(T_t^\pi f)\le e^{-\lambda t}\mathrm{Lip}(f)$. Taking the supremum over $\pi$ preserves the Lipschitz constant, so
$\mathrm{Lip}(T_t f)\le e^{-\lambda t}\mathrm{Lip}(f)$. Finally, \eqref{eq:T-osc-wass} follows from Lemma~\ref{lem:lip-to-osc}.
\end{proof}

We are now in a position to state the first main conclusion of this section: The time-consistent sublinear (Nisio) semigroup
inherits the uniform Wasserstein/Lipschitz contraction and therefore admits a \emph{robust} ergodic limit. In particular,
for every bounded Lipschitz test functional $f$, the worst-case value function $T_t f$ converges uniformly to a constant (a sublinear
``equilibrium expectation''), independently of the initial state.

\begin{theorem}[Robust Ergodicity of the Nisio Semigroup Under Wasserstein Contraction]\label{thm:robust-erg-wass}
Let Assumptions~\ref{ass:SB-wass} and \ref{ass:UEWC} hold, and assume \eqref{eq:diam-ass}. Then for every
$f\in\mathrm{Lip}_b(E)$ there exists a constant $\mathcal R(f)\in\mathbb R$ such that
\[
\lim_{t\to\infty}\|T_t f-\mathcal R(f)\|_\infty=0.
\]
Moreover, fixing $t_0>0$ and setting $\kappa\coloneqq e^{-\lambda t_0}\in(0,1)$, we have that, for all $t\ge0$,
\[
\|T_t f-\mathcal R(f)\|_\infty \le C_{t_0} D e^{-\lambda t}\mathrm{Lip}(f),
\qquad
C_{t_0}\coloneqq e^{\lambda t_0}\frac{2-\kappa}{1-\kappa}.
\]
Finally, $\mathcal R$ is monotone, constant-preserving, sublinear, and satisfies $\mathcal R(T_t f)=\mathcal R(f)$,
for all $t\ge0$.
\end{theorem}

\begin{proof}
Fix $f\in\mathrm{Lip}_b(E)$ and $t_0>0$. Set $S\coloneqq T_{t_0}$ and $\kappa=e^{-\lambda t_0}$.
By Lemma~\ref{lem:nisio-lip}, $\osc(S^n f)\le D \mathrm{Lip}(S^n f)\le D \kappa^n\mathrm{Lip}(f)$.

Pick $x_0\in E$ and define $a_n\coloneqq S^n f(x_0)$. Since $S$ is monotone and constant-preserving,
$\inf g\le Sg\le \sup g$ pointwise for every bounded $g$. Applying this to $g=S^n f$ yields that both $a_n$
and $a_{n+1}$ lie in the interval $[\inf S^n f,\sup S^n f]$; Hence,
\[
|a_{n+1}-a_n|\le \osc(S^n f)\le D \kappa^n\mathrm{Lip}(f).
\]
Thus $(a_n)$ is Cauchy; Define $\mathcal R(f)\coloneqq \lim_{n\to\infty} a_n$.

For any $x\in E$, $|S^n f(x)-a_n|\le \osc(S^n f)\le D \kappa^n\mathrm{Lip}(f)$, and therefore
\[
\|S^n f-\mathcal R(f)\|_\infty
\le D \kappa^n\mathrm{Lip}(f)+\sum_{k=n}^\infty |a_{k+1}-a_k|
\le D \kappa^n\mathrm{Lip}(f)+\sum_{k=n}^\infty D \kappa^k\mathrm{Lip}(f)
= D \frac{2-\kappa}{1-\kappa} \kappa^n\mathrm{Lip}(f).
\]
For general $t\ge0$, write $t=n t_0+r$ with $r\in[0,t_0)$. By the semigroup property,
$T_t f=T_r(S^n f)$. Using nonexpansiveness in $\|\cdot\|_\infty$ and constant preservation,
\[
\|T_t f-\mathcal R(f)\|_\infty
=\|T_r(S^n f)-T_r(\mathcal R(f))\|_\infty
\le \|S^n f-\mathcal R(f)\|_\infty
\le D \frac{2-\kappa}{1-\kappa} \kappa^n\mathrm{Lip}(f)
\le C_{t_0} D e^{-\lambda t}\mathrm{Lip}(f).
\]
The remaining properties of $\mathcal R$ follow by taking limits of the maps $f\mapsto S^n f(x_0)$ and using
the corresponding properties of $S$ from Proposition~\ref{prop:sublinear-semigroup-wass}.
\end{proof}

Of course, we have a similar result for the conjugate of the Nisio semigroup, a superlinear semigroup acting as a lower expectation.

\begin{definition}[Lower Nisio Semigroup]\label{def:nisio-lower}
Let $(T_t)_{t\ge 0}$ be the upper Nisio semigroup built from the one-step envelopes
$\overline T_t f=\sup_{\theta\in\Theta}P_t^\theta f$. Define the \emph{lower Nisio semigroup}
$(T_t^{-})_{t\ge 0}$ on $\mathrm{Lip}_b(E)$ by
\[
T_t^{-} f  \coloneqq  -T_t(-f),\qquad t\ge 0.
\]
Equivalently, with the one-step lower envelope $\underline T_t f\coloneqq \inf_{\theta\in\Theta}P_t^\theta f$,
we have
\[
T_t^{-} f  =  \inf_{\pi\in\Pi_t}\Bigl(\underline T_{t_n-t_{n-1}}\circ\cdots\circ
\underline T_{t_1-t_0} f\Bigr),
\]
interpreted as a universally measurable envelope.
\end{definition}

\begin{theorem}[Robust Ergodicity of the Lower Nisio Semigroup]\label{thm:robust-erg-wass-lower}
Let Assumptions~\ref{ass:SB-wass} and \ref{ass:UEWC} hold, and assume \eqref{eq:diam-ass}. Then, for every
$f\in\mathrm{Lip}_b(E)$ there exists a constant $\mathcal{R}^{-}(f)\in\mathbb R$ such that
\[
\lim_{t\to\infty}\|T_t^{-} f-\mathcal{R}^{-}(f)\|_\infty=0.
\]
Moreover, fixing $t_0>0$ and setting $\kappa\coloneqq e^{-\lambda t_0}\in(0,1)$, we have that for all $t\ge 0$,
\[
\|T_t^{-} f-\mathcal{R}^{-}(f)\|_\infty
\le C_{t_0}De^{-\lambda t}\mathrm{Lip}(f),
\qquad
C_{t_0}\coloneqq e^{\lambda t_0}\frac{2-\kappa}{1-\kappa}.
\]
Finally, $\mathcal{R}^{-}$ is monotone, constant-preserving, \emph{superlinear} (i.e. concave):
\[
\mathcal{R}^{-}(f+g)\ge \mathcal{R}^{-}(f)+\mathcal{R}^{-}(g),\qquad
\mathcal{R}^{-}(\alpha f)=\alpha\mathcal{R}^{-}(f)  (\alpha\ge 0),
\]
it is translation invariant $\mathcal{R}^{-}(f+c)=\mathcal{R}^{-}(f)+c$, and it satisfies
$\mathcal{R}^{-}(T_t^{-} f)=\mathcal{R}^{-}(f)$ for all $t\ge 0$.
\end{theorem}

\begin{proof}
Let $\mathcal{R}$ be the limit functional from Theorem~\ref{thm:robust-erg-wass}. By definition,
\[
T_t^{-}f=-T_t(-f).
\]
Applying Theorem~\ref{thm:robust-erg-wass} to $-f$ yields
\[
\|T_t(-f)-\mathcal{R}(-f)\|_\infty\to 0
\]
with the stated rate. Hence,
\[
\|T_t^{-}f-(-\mathcal{R}(-f))\|_\infty \to 0,
\]
with the same bound. Define $\mathcal{R}^{-}(f)\coloneqq -\mathcal{R}(-f)$.
All stated properties of $\mathcal{R}^{-}$ and $(T_t^{-})_{t\ge 0}$ follow by conjugation from the
corresponding sublinear properties of $\mathcal{R}$ and $(T_t)_{t\ge 0}$.
\end{proof}

Theorems \ref{thm:robust-erg-wass} and \ref{thm:robust-erg-wass-lower} give us the following reflection. 

\begin{remark}[Relationship Between Static and Pasted Limits]\label{rem:static-vs-nisio}
Let Assumptions~\ref{ass:SB-wass} and \ref{ass:UEWC} hold, and assume \eqref{eq:diam-ass}.
Consider the upper Nisio  semigroup $(T_t)_{t\ge 0}$ built from the one-step envelopes
$\overline T_t f=\sup_{\theta\in\Theta}P_t^\theta f$ as in Definition~\ref{def:nisio-wass}
(and define analogously the lower Nisio semigroup $(T_t^{-})_{t\ge 0}$ from
$\underline T_t f=\inf_{\theta\in\Theta}P_t^\theta f$).
Then, for every $t\ge 0$ and $f\in\mathrm{Lip}_b(E)$,
\[
T_tf  \ge  \overline T_t f=\sup_{\theta\in\Theta}P_t^\theta f,
\qquad
T_t^{-}f  \le  \underline T_t f=\inf_{\theta\in\Theta}P_t^\theta f.
\]
Indeed, the one-block partition $\pi=(0,t)$ belongs to $\Pi_t$, so $T_t^{\pi}f=\overline T_t f$ and
$T_tf=\sup_{\pi\in\Pi_t}T_t^{\pi}f\ge \overline T_t f$; The lower bound is analogous.

Consequently, if $\mathcal R(f)$ and $\mathcal R^{-}(f)$ denote the ergodic limits from
Theorems~\ref{thm:robust-erg-wass} (upper) and \ref{thm:robust-erg-wass-lower} (lower), respectively, then
\[
\mathcal R(f) \ge \sup_{\theta\in\Theta}\mu_\theta(f),
\qquad
\mathcal R^{-}(f) \le \inf_{\theta\in\Theta}\mu_\theta(f),
\]
where $\mu_\theta$ is the unique invariant law of $\mathbf P^\theta$ from
Proposition~\ref{prop:unique-inv-wass}.
In general, these inequalities may be strict, reflecting the additional flexibility of
state-dependent switching allowed by the pasted (time-consistent) construction.
\end{remark}

\section{Conclusion}\label{concl}

In this work, we introduced the concept of an imprecise Markov semigroup $\mathbf{Q}$, and studied its ergodic behavior  (i) when the state space $E$ is a Euclidean space or a Riemannian manifold and $\mathbf{Q}$ is compact; (ii) when $E$ is allowed to be an arbitrary good measurable space and $\mathbf{Q}$ is compact; and (iii) when $E$ is a generic Polish metric space with finite diameter, and  $\mathbf{Q}$ is not assumed compact. 

% In the future, we plan to extend the findings in Section \ref{subsec:wass-sublinear} beyond the function class $\text{Lip}_b(E)$, and to relax the finite diameter assumption \eqref{eq:diam-ass}. We will also extend our theory to model ambiguity on the initial distribution of a Markov process explicitly.
% %%plan to forego the assumption that $\mathbf{Q}$ is compact, 
% %in the weak semigroup topology,
% %$\mathcal{P}_t^x$ is w-closed, for all $t\geq 0$, 
% %%and work directly with nonlinear Markov semigroups \citep{kuhn,feng-zhao}.
% %, and conditional nonlinear expectations \citep{BARTL}. 
% %%This will require a considerable effort, since a new theory involving the Bakry-Émery curvature condition associated with nonlinear Markov semigroups will need to be developed. 
% %This will require a considerable effort, since the techniques we used in the present paper hinge upon the assumption that the $t$-th element $P_t$ of a Markov semigroup $\mathbf{P} \in \mathbf{Q}$ is a linear operator, for all $t \geq 0$ \citep[Section 1.2.1.(i)]{bakry}. A new theory on the analysis and geometry of sublinear Markov diffusion operator will need to be developed. 
% %Before this (ambitious) goal, we 
% Furthermore, we plan to apply our findings to machine learning and computer vision, making more concrete our high-level discussion in the Introduction, and to achieve robustness in reinforcement learning, where Markov processes are used for planning in autonomous agents. 

In future work, we plan to extend the results of Section \ref{subsec:wass-sublinear} beyond the function class $\text{Lip}_b(E)$ and to relax the finite-diameter assumption \eqref{eq:diam-ass}. We also plan to extend our theory to explicitly model ambiguity in the initial distribution of a Markov process.

Furthermore, we aim to apply these results to machine learning and computer vision, thereby making the high-level discussion in the Introduction more concrete, and to develop robust reinforcement learning methods, where Markov processes are used for planning by autonomous agents.

We are also interested in extending to the imprecise setting the types of inequalities developed in \citet[Chapters 6-9]{bakry}. In particular, we are interested in deriving imprecise optimal transport inequalities, in the spirit of \citep{caprio2024optimaltransportepsiloncontaminatedcredal,ipmu}, for application to distribution shift problems in machine learning and computer vision \citep{vivian}.

%\cite{BARTL}: in the future, generalize to conditional nonlinear expectations, so we do not need to assume $\mathcal{P}_t^x$ to be w-closed.

\section*{Acknowledgements}
We would like to acknowledge partial funding by the Army Research Office, grant ARO MURI W911NF2010080. We wish to express our gratitude to Mira Gordin for insightful discussions, particularly on how the gradient bound for a diffusion operator relates to Poincaré inequalities, and to Simone Cerreia-Vioglio and Paolo Perrone for their help in finding the suitable topology compatible with the partial order $\preceq^\text{part}_{\tilde f}$, and in shaping up the discussion after Lemma \ref{uniqueness}.
%for the partial order case.
%on the space of Markov semigroups. 
We are also extremely grateful to Enrique Miranda for his suggestion to look into \citet[Example 6.2]{krak}, to Andrea Aveni for his comments on an initial version of this work, and to two anonymous reviewers for their invaluable remark that greatly improved our manuscript. We are also indebted to Malena Español for her invitation to the Institute for Advanced Study, where the idea of the present paper germinated. 

%% The Appendices part is started with the command \appendix;
%% appendix sections are then done as normal sections
\appendix

\section{Proof of Theorem \ref{thm-main-2}}\label{app-a}
    We begin by pointing out that, as a consequence of Lemma \ref{uniqueness} and Definition \ref{idmt-def}, $\underline{\mathbf{P}} $ and $\overline{\mathbf{P}} $ are (precise) Markov semigroups, and so the operators $\underline{\Gamma} $, $\overline{\Gamma} $, $\underline{L} $ and $\overline{L} $ are well-defined. 

    We first focus on the diffusion Markov triple $(E,\overline{\mu} ,\overline{\Gamma} )$. By \citet[Proposition 3.3.11]{bakry} and  $\overline{\mathbf{P}} =(\overline{P} _t)_{t \geq 0}$ being a (precise) Markov semigroup, the fact that $\overline{L} $ is essentially self-adjoint follows from the connexity, weak hypo-ellipticity, and completeness assumptions. Pick any $f\in\mathcal{A}_0^{\overline{\mu}}$. The fact that $\overline{L} $ satisfies ESA, together with our assumption that it satisfies the curvature condition $CD(\rho_{\overline{L}},\infty)$, for some $\rho_{\overline{L}} \in\mathbb R$, implies -- by \citet[Corollary 3.3.19]{bakry} -- that for all $t \geq 0$, the following gradient bound holds
    \begin{equation}\label{eq-grad-bd-gen}
        \overline{\Gamma} \left(\overline{P} _t f \right) \leq e^{-2t \rho_{\overline{L}}} \overline{P} _t\left( \overline{\Gamma} (f) \right).
    \end{equation}
    Then, by \citet[Theorem 4.7.2]{bakry}, equation \eqref{eq-grad-bd-gen} implies that the following local Poincaré inequality holds for all $t\geq 0$,
    \begin{equation}\label{local-poinc-ineq-gen}
        \overline{P} _t(f^2) - \left( \overline{P} _t f \right)^2 \leq \frac{1-e^{-2t \rho_{\overline{L}}}}{\rho_{\overline{L}}} \overline{P} _t\left( \overline{\Gamma} (f) \right).
    \end{equation}
    Here the factor needs to be understood as $2t$ when $\rho_{\overline{L}}=0$.
Following \citet[Theorem 2.35]{handel}, 
    %and \citet[Proposition 4.8.1]{bakry}, 
    taking the limit as $t\rightarrow \infty$ on both sides of  \eqref{local-poinc-ineq-gen} yields the following Poincaré inequality
    \begin{equation}\label{poinc-ineq-gen}
        \text{Var}_{\overline{\mu} }(f) \leq \frac{1}{\rho_{\overline{L}}} \overline{\mathcal{E}} (f).
    \end{equation}
    %where Dirichlet form $\mathcal{E}$ is introduced in \citet[Section 3.4.1.D4]{bakry}, and reported in Definition 9.(DMT4) in Section 2 of the \href{https://www.dropbox.com/scl/fi/kuia82psb8kkdrz7oz5zh/IMSG_AMC_Suppl.pdf?rlkey=tgqip6vwdmludfmcfx2fku391&st=eu3vdqah&dl=0}{Supplementary Material}, and the variance $\text{Var}_{\overline{\mu} }$ is computed similarly to \eqref{variance}. 
    As in the proof of Theorem \ref{thm-main-1}, here it is implicitly assumed that $\rho_{\overline{L}}>0$. In addition, 
    %by \citet[Section 3.4.1.D4-D5]{bakry}, reported in Definition 9.(DMT4)-(DMT5) in Section 2 of the \href{https://www.dropbox.com/scl/fi/kuia82psb8kkdrz7oz5zh/IMSG_AMC_Suppl.pdf?rlkey=tgqip6vwdmludfmcfx2fku391&st=eu3vdqah&dl=0}{Supplementary Material}, 
    we know that 
    \begin{equation}\label{int-by-parts-gen}
        \overline{\mathcal{E}} (f)=-\int_E f \overline{L}  f \text{d} \overline{\mu} .
    \end{equation}
    Combining \eqref{int-by-parts-gen} with \eqref{poinc-ineq-gen}, we have that if function $f$ is such that $\overline{L}  f =0$, then $\overline{\mathcal{E}} (f)=0$. In turn, this implies that $\text{Var}_{\overline{\mu} }(f)=0$, and so the function $f$ must be constant $\overline{\mu} $-a.e. Notice now that, since $\overline{L} $ satisfies the ESA property, $\mathcal{A}_0^{\overline{\mu}}$ is dense in $\mathcal{D}(\overline{\mathcal{E}} )$ in the topology induced by the Dirichlet norm $\|\cdot \|_{\overline{\mathcal{E}} }$.\footnote{This norm is recalled in Definition \ref{diff-mark-trip}.(DMT4) in \ref{diff-mak-tr-back}.}  Thanks to this, \eqref{eq-grad-bd-gen}-\eqref{int-by-parts-gen} hold for all $f\in\mathcal{D}(\overline{\mathcal{E}} )$ by an approximation argument. This implies that every $f\in\mathcal{D}(\overline{\mathcal{E}} )$ such that $\overline{\Gamma} (f)=0$ is constant, and so that $\overline{L} $ is ergodic according to \citet[Definition 3.1.11]{bakry}. Then, since $\overline{\mu} $ is finite, by \citet[Proposition 3.1.13]{bakry} we have that
    \begin{equation*}
        \forall f \in \mathbb{L}^2(\overline{\mu} ), \quad \lim_{t \rightarrow \infty} \overline{P}_t  f =\int_E f \text{d}\overline{\mu} , \quad \overline{\mu} \text{-a.e.}
    \end{equation*}
    in $\mathbb{L}^2(\overline{\mu} )$. This concludes the first part of the proof. The second part, that is, the one pertaining the least element $\underline{\mathbf{P}} $ of $\mathbf{Q}$, is analogous. The last statement of the theorem is immediate once we let $\overline{\mu} =\underline{\mu} =\mu $. \hfill \qedsymbol

\section{Measurability of Envelopes and Switching Operators}\label{rem:meas}
Under Assumption~\ref{ass:SB-wass}, for each $t\ge0$ and bounded {Borel} $f$, the map
$(x,\theta)\mapsto P_t^\theta f(x)$ is Borel measurable (by a standard monotone class argument, starting from indicators $\mathbbm{1}_A$). Now, let us denote by $\mathcal F^\star$  the \emph{universal completion} of $\mathcal F$, and  by
$B^\star(E)$ the space of bounded $\mathcal F^\star$-measurable real-valued functionals on $E$.
Since $f$ is $\mathcal F^\star$-measurable,
it is measurable with respect to the completion of $p_t^\theta(x,\cdot)$ for every $(t,x,\theta)$; Hence we
may extend $P_t^\theta$ to $B^\star(E)$ by
\[
P_t^\theta f(x)\coloneqq \int_E f(y) p_t^\theta(x,\mathrm dy),
\qquad f\in B^\star(E),
\]
which is well-defined and $\mathcal F^\star$-measurable in $x$.

Consequently, for each fixed $t\ge0$ and bounded {Borel} $f$, the envelope
$x\mapsto \sup_{\theta\in\Theta}P_t^\theta f(x)$ is {upper semianalytic}, hence $\mathcal F^\star$-measurable.
Likewise, the \emph{time-consistent} switching operator introduced in Definition~\ref{def:nisio-wass}
is obtained by composing the one-step envelopes $\overline T$ along a finite partition of $[0,t]$
and then taking a supremum over partitions.\footnote{Similarly for the lower Nisio semigroup in Definition \ref{def:nisio-lower}.} Since upper semianalytic functions are stable under
application of Borel kernels -- i.e. $g \mapsto (x \mapsto \int g(y)p(x,\text{d}y))$ -- and under pointwise suprema, for bounded Borel $f$ each $T_t f$ is
upper semianalytic and therefore $\mathcal F^\star$-measurable. For general $f\in B^\star(E)$ the
same conclusion holds at the level of universal measurability, so all operators are interpreted as acting on
$B^\star(E)$.

\section{Background Notions}\label{app-b}

In an effort to keep the paper self-contained, we present in this section the preliminary concepts needed to appreciate the main results in our work.
%The concepts in this section come from \citet{bakry}.
\subsection{Background on Markov Semigroups}\label{back-MSG}
Let us briefly recall what a (precise) Markov process is; To do so, we borrow the notation and terminology from \citet[Chapter 1.1]{bakry}.
%\footnote{Throughout the main portion of this paper, we focus our attention on Markov processes rather than Markov chains because we are primarily interested in continuous-time stochastic processes.} 
Let $(X_t)_{t \geq 0}$ be a measurable process on a probability space $(\Omega,\Sigma,\mathbb{P})$, such that $X_t \equiv X_t(\omega)$ is an element of the good measurable state space $E$ from Definition \ref{good}, for all $t \geq 0$ and all $\omega\in\Omega$.
%starting at time $t=0$ from a starting point $x$ in a good measurable space $E$. 
%For a given starting point $x\in E$, d
Denote by $\mathcal{F}_t \coloneqq \sigma(X_u : u \leq t)$, $t \geq 0$, the natural filtration of $(X_t)_{t \geq 0}$. 

The \textit{Markov property} indicates that for $t>s$, the law of $X_t$ given $\mathcal{F}_s$ is the law of $X_t$ given $X_s$, as well as the law of $X_{t-s}$ given $X_0$, the latter property reflecting the fact that the Markov process is time homogeneous, which is the only case that we consider in the present work. As a consequence, throughout the paper we focus our attention on the law of $X_t$ given $X_0$, for a generic $t \geq 0$.

For any $t\geq 0$, we can describe the distribution of $X_t$ starting from $X_0=x$ via a \textit{probability kernel} $p_t(x,A)$, that is, a function $p_t:E \times \mathcal{F} \rightarrow [0,1]$ such that (i) $p_t(x,\cdot)$ is a probability measure, for all $x\in E$, and (ii) $x \mapsto p_t(x,A)$ is measurable, for every measurable set $A\in\mathcal{F}$.

We now introduce one of the main concepts that we use in the present paper, namely a \textit{Markov semigroup}. We follow the presentation in \cite[Chapter 1.2]{bakry}. Let $B(E)$ denote the set of bounded measurable functionals on $(E,\mathcal{F})$. A Markov semigroup $\mathbf{P}=(P_t)_{t \geq 0}$ is a family of operators that satisfies the following properties.

\begin{itemize}
    \item[(i)] For every $t \geq 0$, $P_t: B(E) \rightarrow B(E)$ is a linear operator (linearity).
    \item[(ii)] $P_0=\text{Id}$, the identity operator (initial condition).
    \item[(iii)] $P_t \mathbbm{1}=\mathbbm{1}$, where $\mathbbm{1}$ is the constant function equal to $1$ (mass conservation).\footnote{In general, we write $P_t(f) \equiv P_t f$, for all $f \in B(E)$, for notational convenience.} 
    \item[(iv)] For every $t \geq 0$, $f \geq 0 \implies P_tf \geq 0$ (positivity preserving).
    \item[(v)] For every $t,s \geq 0$, $P_{t+s}=P_t \circ P_s$ (semigroup property).
\end{itemize}

One extra condition -- continuity at $t = 0$ -- is needed to fully define a Markov semigroup $\mathbf{P}=(P_t)_{t \geq 0}$. To present it though, we first need to introduce the concept of invariant (or stationary) measure.

Let $\mathbf{P}=(P_t)_{t \geq 0}$ be a family of operators satisfying (i)-(v). Then, a (positive) $\sigma$-finite measure $\mu$ on $(E,\mathcal{F})$ is called \textit{invariant} for $\mathbf{P}$ if for every bounded positive measurable functional $f$ on $E$, and every $t \geq 0$, 

\begin{equation}\label{invaraint}
    \int_E P_t f \text{d}\mu=\int_E f \text{d}\mu.
\end{equation}

Notice that, as a consequence of (iii), (iv), and Jensen's inequality, we have that if a measure $\mu$ is invariant for $\mathbf{P}$, then, for every $t \geq 0$, $P_t$ is a contraction on the bounded functions in $\mathbb{L}^p(\mu)$, for any $1 \leq p \leq \infty$.

We are now ready for the complete definition.

\begin{definition}[Markov Semigroup]\label{msg}
    Let $(E,\mathcal{F})$ be a good measurable space. A family $\mathbf{P}=(P_t)_{t \geq 0}$ of operators $P_t$ is called a Markov semigroup if it satisfies (i)-(v), there exists an invariant measure $\mu$ for $\mathbf{P}$, and the following extra condition is satisfied
    \begin{itemize}
        \item[(vi)] For every $f \in \mathbb{L}^2(\mu)$, $\lim_{t \rightarrow 0} P_tf=f$ in $\mathbb{L}^2(\mu)$ (continuity at $t = 0$). 
    \end{itemize}
\end{definition}

Notice that, since $(P_t)_{t \geq 0}$ is a contraction (as pointed out after equation \eqref{invaraint}) and a semigroup (by condition (v)), property (vi) expresses that $t \mapsto P_tf$ is continuous in $\mathbb{L}^2(\mu)$ on $\mathbb{R}_+$. In addition, we could have required convergence in $\mathbb{L}^p(\mu)$ for any $p \in [1,\infty)$, but we stick to $p=2$ for simplicity and to align with \citet{bakry}. 
%This is the continuity at $t=0$ we need to complete the definition of a Markov semigroup.
Before moving on, let us add a remark on invariant measures.

\begin{remark}[Spotlight on Invariant Measures]\label{spotlight}
    In general, an invariant measure $\mu$ for a Markov semigroup $\mathbf{P}$ need not be a \textit{probability} measure.\footnote{If it is a finite measure, it is customary to normalize it to a probability measure. It is then usually unique \citep[Page 11]{bakry}.}
    %, e.g. when the system is suitably irreducible.} 
    In addition, it need not always exist; Necessary and sufficient conditions for its existence may be found in \citep{ito_paper,shlomo}. That being said, most semigroups of interest do have an invariant measure \citep[Section 1.2.1]{bakry}. A way to derive $\mu$ as a weak limit of a ``reasonable'' initial probability measure $\mu_0$ is inspected in \citet[Page 10]{bakry}. A typical example of a Markov semigroup having an invariant measure is the heat or Brownian semigroup on $\mathbb{R}^n$, whose invariant measure is (up to multiplication by a constant) the Lebesgue measure.\footnote{The invariant measure is in general only defined up to a multiplicative constant.}
\end{remark}

As pointed out in \citet[Section 1.2.1]{bakry}, when the invariant measure $\mu$ is a probability measure, it has an immediate interpretation. If the Markov process $(X_t)_{t \geq 0}$ starts at time $t=0$ with initial distribution $\mu$, i.e. if $X_0 \sim \mu$, then it keeps the same distribution at each time $t\geq 0$; Indeed,
\begin{align*}
%\label{inv_eq}
    \mathbb{E}[f(X_t)]=\mathbb{E}[\mathbb{E}(f(X_t) \mid X_0)]=\mathbb{E}[P_t f(X_0)]&=\int_E P_t f \text{d}\mu\\
    &= \int_E f \text{d}\mu = \mathbb{E}[f(X_0)],
\end{align*}
for all $f\in B(E)$ and all $t \geq 0$.  Functionals $f$, then, have to be understood as classes of functions for the $\mu$-a.e. equality, and equalities and inequalities such as $f \leq g$ are always understood to hold $\mu$-a.e.

We now continue our presentation of the concepts related to Markov semigroups that are crucial to derive our main results. An important property of a Markov semigroup is its symmetry \citep[Definition 1.6.1]{bakry}.

\begin{definition}[Symmetric Markov Semigroup]\label{symmetry}
    Let $\mathbf{P}=(P_t)_{t \geq 0}$ be a Markov semigroup with (good) state space $(E,\mathcal{F})$ and invariant measure $\mu$. We say $\mathbf{P}$ is \textit{symmetric} with respect to $\mu$ -- or equivalently, that $\mu$ is \textit{reversible} for $\mathbf{P}$ -- if, for all functions $f,g \in \mathbb{L}^2(\mu)$ and all $t \geq 0$,
    \begin{equation*}
        \int_E f P_t g \text{d}\mu = \int_E g P_t f \text{d}\mu.
    \end{equation*}
\end{definition}

The probabilistic interpretation of Definition \ref{symmetry} is straightforward. As pointed out in \citet[Section 1.6.1]{bakry}, the name ``reversible'' refers to reversibility in time of the Markov process $(X_t)_{t \geq 0}$ associated with the Markov semigroup $\mathbf{P}$ whenever the initial law $\mu$ is the invariant measure. Indeed, from \eqref{invaraint}, we know that if the process starts from the invariant distribution $\mu$, then it keeps the same distribution at each time $t$. Moreover, if the measure is reversible, and if the initial distribution of $X_0$ is $\mu$, then for any $t > 0$ and any partition $0\leq t_1 \leq \cdots \leq t_k\leq t$ of the time interval $[0,t]$, the law of
$(X_0,X_{t_1},\ldots,X_{t_k} ,X_t)$ is the same as the law of $(X_t,X_{t-t_1} ,\ldots,X_{t-t_k} ,X_0)$. Hence, we can say that the law of the Markov process is ``reversible in time''.\footnote{For examples of symmetric Markov semigroups, we refer the reader to \citet[Section 1.6.1, Appendices A.2, A.3]{bakry}.}

We now need to introduce the concept of infinitesimal generator of a Markov semigroup $\mathbf{P}=(P_t)_{t \geq 0}$ \citep[Definition 1.4.1]{bakry}. By Hille-Yosida theory \citep{hille},\footnote{A more modern reference for this result is \cite[Appendix A.1]{bakry}.} there exists a dense linear subspace of $\mathbb{L}^2(\mu)$, called the domain $\mathcal{D}$ of the semigroup $\mathbf{P}$, on which the derivative at $t = 0$ of $P_t$ exists in $\mathbb{L}^2(\mu)$.

\begin{definition}[Infinitesimal Generator]\label{infint-gen}
    Let $\mathbf{P}=(P_t)_{t \geq 0}$ be a Markov semigroup with (good) state space $(E,\mathcal{F})$ and invariant measure $\mu$. The \textit{infinitesimal generator} $L$ of $\mathbf{P}$ in $\mathbb{L}^2(\mu)$ is a linear operator $L:\mathcal{D} \rightarrow \mathbb{L}^2(\mu)$, 
    $$f \mapsto Lf \coloneqq \lim_{t \rightarrow 0} \frac{P_t f - f}{t}.$$
    %whenever the limit exists in $\mathbb{L}^2(\mu)$. The domain $\mathcal{D}$ of the infinitesimal generator $L$ is the linear subspace of $\mathbb{L}^2(\mu)$ on which the limit in \eqref{inf-gen-def} exists in $\mathbb{L}^2(\mu)$.
\end{definition}

%Notice that $\mathcal{D}$ is a dense linear subspace of $\mathbb{L}^2(\mu)$ by Hille-Yosida \citep{hille}.\footnote{A more modern reference for this result is \cite[Appendix A.1]{bakry}.} 
%The infinitesimal generator $L$ is a linear operator. 
The domain $\mathcal{D}$ is also called the domain of $L$, denoted by $\mathcal{D}(L)$, and depends on the underlying space $\mathbb{L}^2(\mu)$. Two comments are in place. First, as indicated by \citet[Chapter 1.4]{bakry}, the couple $(L,\mathcal{D}(L))$ completely characterizes the Markov semigroup $\mathbf{P}$ acting on $\mathbb{L}^2(\mu)$. Second, 
%contrary to what it may seem at first glance, 
the infinitesimal generator $L$ has a natural interpretation. As pointed out by \href{https://danmackinlay.name/notebook/infinitesimal_generators.html}{Dan MacKinlay}, $L$ is a kind of linearization of the local Markov transition kernel for the Markov process $(X_t)_{t \geq 0}$ associated with the Markov semigroup $\mathbf{P}$; This is because $L$ is defined as a derivative.
%The reason for this is the fact that the operator $L$ maps $f \in \mathcal{D}(L)$ to the derivative $Lf \in \mathbb{L}^2(\mu)$ of $P_tf$ at $t = 0$.
%Since $(X_t)_{t \geq 0}$ has a kind of stochastic smoothness, its linearization behaves nicely. 

The infinitesimal generator $L$ of a Markov semigroup $\mathbf{P}$ is sometimes referred to as the \textit{Markov generator} (of $\mathbf{P}$). We can express the reversibility property of Definition \ref{symmetry} in terms of $L$ as

\begin{equation}\label{eq-symm-gen}
    \int_E fLg \text{d}\mu = \int_E gLf \text{d}\mu, \quad \forall f,g \in \mathcal{D}(L).
\end{equation}
The Markov generator is thus symmetric, however it is unbounded in $\mathbb{L}^2(\mu)$.

Next, we present the carré du champ operator, that measures how far an infinitesimal generator $L$ is from being a derivation \citep{ledoux}.\footnote{Here, ``derivation'' refers to the differential algebra meaning of the term \citep{lang}.}

\begin{definition}[Carré du Champ]\label{carre}
    Let $\mathbf{P}=(P_t)_{t \geq 0}$ be a Markov semigroup with (good) state space $(E,\mathcal{F})$ and invariant measure $\mu$. Let $L$ be its infinitesimal generator with $\mathbb{L}^2(\mu)$-domain $\mathcal{D}(L)$. Let $\mathcal{A} \subseteq \mathcal{D}(L)$ be an algebra. Then, the bilinear map
    $$\Gamma: \mathcal{A} \times \mathcal{A} \rightarrow \mathcal{A}, \quad (f,g) \mapsto \Gamma(f,g) \coloneqq \frac{1}{2}\left[ L(fg) -fLg-gLf \right]$$
    is called the \textit{carré du champ operator} of the Markov generator $L$.
\end{definition}

The carré du champ operator is symmetric and is positive on $\mathcal{A}$, that is, $\Gamma(f,f) \geq 0$, for all $f \in \mathcal{A}$.\footnote{This is a fundamental property of Markov semigroups \citep[Section 1.4.2]{bakry}.} In this work, we write $\Gamma(f,f) \equiv \Gamma(f)$, $f\in\mathcal{A}$, to lighten the notation. In addition, notice that the definition of the carré du champ operator is subordinate to the algebra $\mathcal{A}$. For this reason, we call the latter the \textit{carré du champ algebra}.

We now ask ourselves whether it is enough to know the Markov generator $L$ on smooth functionals on $E$ in order to describe the associated Markov semigroup $\mathbf{P}=(P_t)_{t \geq 0}$. The answer is positive, provided $L$ is a diffusion operator \citep[Definition 1.11.1]{bakry}.
%(at least $\mathcal{C}^2$)

\begin{definition}[Diffusion Operator]\label{diffusion}
    An operator $L$, with carré du champ operator $\Gamma$, is said to be a \textit{diffusion operator} if 
    \begin{equation}\label{diffusion-def}
        L\psi(f)=\psi^\prime(f)Lf + \psi^{\prime\prime}(f)\Gamma (f),
    \end{equation}
    for all $\psi:\mathbb R \rightarrow \mathbb R$ of class at least $\mathcal{C}^2$, and every suitably smooth functional $f$ on $E$.\footnote{Typically of class at least $\mathcal{C}^2$.}
\end{definition}

Together with the following \citep[Section 1.16.1]{bakry}, the notion of diffusion operator will allow us to link the probabilistic properties of a Markov semigroup to the geometry of the state space $E$.
%(e.g. ergodicity) the functionals of interest are defined on.

\begin{definition}[Iterated Carré du Champ Operator]\label{iter-cdc}
    Let $\mathbf{P}=(P_t)_{t \geq 0}$ be a symmetric Markov semigroup with (good) state space $(E,\mathcal{F})$ and invariant reversible measure $\mu$. Let $L$ be its infinitesimal generator with $\mathbb{L}^2(\mu)$-domain $\mathcal{D}(L)$, and denote by $\mathcal{A}$ the algebra on which the carré du champ operator $\Gamma$ of $L$ is well defined. The \textit{iterated carré du champ operator} of the Markov generator $L$ is defined as
    \begin{equation}\label{iter-cdc-def}
        \mathcal{A}\ni\Gamma_2(f,g) \coloneqq  \frac{1}{2}\left[ L\Gamma(f,g) -\Gamma(f,Lg)-\Gamma(Lf,g) \right],
    \end{equation}
    %\mengqi{Should the first term be $L\Gamma(f,g)$?}
    for every pair $(f,g)\in \mathcal{A} \times \mathcal{A}$ such that the terms on the right hand side of \eqref{iter-cdc-def} are well defined.
\end{definition}

As for the carré du champ operator $\Gamma$, we write $\Gamma_2(f,f) \equiv \Gamma_2(f)$ to lighten the notation. Unlike $\Gamma$, $\Gamma_2$ is not always positive. As we will see more formally in \ref{riemann-manif}, $\Gamma_2$ is positive if and only if the state space $E$ has ``positive curvature''. This high-level intuition, together with the fact that many differential inequalities can be expressed via $\Gamma_2$ and $\Gamma$, justifies the following definition \citep[Definition 1.16.1]{bakry}.

\begin{definition}[Bakry-Émery Curvature]\label{b-e-curv}
    Let $\mathbf{P}=(P_t)_{t \geq 0}$ be a symmetric Markov semigroup with (good) state space $(E,\mathcal{F})$ and invariant reversible measure $\mu$. Let $L$ be its infinitesimal generator with $\mathbb{L}^2(\mu)$-domain $\mathcal{D}(L)$, and assume it is a diffusion operator. Denote by $\mathcal{A}$ the algebra on which the carré du champ operator $\Gamma$ of $L$ is well defined.\footnote{E.g. the algebra of functionals of class at least $\mathcal{C}^2$ on $E$.} Operator $L$ is said to satisfy the \textit{Bakry-Émery curvature condition} $CD(\rho,n)$, for $\rho \in \mathbb{R}$ and $n\in [1,\infty]$, if for every functional $f$ in $\mathcal{A}$, the following holds $\mu$-a.e.
    \begin{equation}\label{b-e-curv-def}
        \Gamma_2(f) \geq \rho \Gamma(f) + \frac{1}{n} (Lf)^2.
    \end{equation}
\end{definition}

Of course, equation \eqref{b-e-curv-def} becomes $\Gamma_2(f) \geq \rho \Gamma(f)$ for $n=\infty$.

\subsubsection{Riemannian Manifold}\label{riemann-manif}
When $E=\mathbb{R}^n$, or $E=M$, a complete (with respect to the Riemannian distance) Riemannian manifold (see Figure \ref{fig-riem}), the carré du champ and the iterated carré du champ operators are easily computed.

For example, if $\mathcal{A}_0$ is the class of smooth ($\mathcal{C}^\infty$) functionals with compact support\footnote{The reason for subscript $0$ in $\mathcal{A}_0$ will be clear in Section \ref{diff-mak-tr-back}.} -- i.e. such that $\text{supp}(f) = \{x \in E : f(x) \neq 0\}$ is compact in the Euclidean topology -- and $L=\Delta$ (the standard Laplacian on $\mathbb{R}^n$), then $\Gamma(f)=|\nabla f|^2$ and $\Gamma_2(f)=|\nabla\nabla f|^2$, where $\nabla\nabla f=\text{Hess}(f)=(\partial_{ij} f)_{i,j \in\{1,\ldots,n\}}$ is the Hessian of $f$. Let instead $L$ be the Laplace-Beltrami operator $\Delta_\mathfrak{g}$ on an $n$-dimensional Riemannian manifold $(M,\mathfrak{g})$. Here $\mathfrak{g}$ is the Riemannian co-metric $\mathfrak{g} \equiv \mathfrak{g}(x)=(g^{ij})_{i,j\in\{1,\ldots,n\}}=((g_{ij})_{i,j\in\{1,\ldots,n\}})^{-1}=G^{-1}$, where $G$ is the positive-definite
symmetric matrix that gives the Riemannian metric. Then, $\Gamma(f)=|\nabla f|^2$ and -- thanks to the Bochner-Lichnerowicz formula \citep[Theorem C.3.3]{bakry} -- $\Gamma_2(f)=|\nabla\nabla f|^2 + \text{Ric}(\nabla f,\nabla f)$, where $\text{Ric}=\text{Ric}_\mathfrak{g}$ is the Ricci tensor of $(M,\mathfrak{g})$.\footnote{Loosely, the Ricci tensor gives us as measure of how the geometry of a given metric tensor differs locally from that of ordinary Euclidean space. Intuitively, it quantifies how a shape is deformed as one moves along the geodesics in the Riemannian manifold.} This implies that 
$$\Gamma_2(f) \geq \rho \Gamma(f)=\rho |\nabla f|^2 \text{, } \forall f \in \mathcal{A}_0 \iff \text{Ric}(\nabla f,\nabla f) \geq \rho |\nabla f|^2,$$
that is, the Bakry-Émery curvature condition $CD(\rho,\infty)$ holds if and only if the Ricci tensor at every point is bounded from below by $\rho$. In turn, $\Gamma_2$ is positive if and only if the Ricci curvature $\text{Ric}$ of the manifold is positive. This makes more precise the idea we expressed in the previous section that $\Gamma_2$ is positive if and only if the state space $E$ has ``positive curvature''.

We now introduce the concept of elliptic diffusion operator. First, notice that when $E$ is $\mathbb{R}^n$ or a Riemannian manifold $M$, we can rewrite condition \eqref{diffusion-def} for $L$ to be a diffusion operator as

\begin{equation}\label{diffusion-def2}
    Lf=\sum_{i,j \in \{1,\ldots,n\}} g^{ij} \partial^2_{ij} f + \sum_{i=1}^n b^i \partial_i f, \quad f \in\mathcal{A}_0.
\end{equation}

Here, $\mathfrak{g}\equiv \mathfrak{g}(x)=(g^{ij}(x))_{i,j \in \{1,\ldots,n\}}$ is a smooth ($\mathcal{C}^\infty$) $n\times n$ symmetric matrix-valued function of $x \in E$ (e.g. the co-metric of $M$ at $x$), and $b\equiv b(x)=(b^i(x))_{i=1}^n$  is a smooth ($\mathcal{C}^\infty$) $\mathbb{R}^n$-valued function of $x\in E$ (e.g. a vector field). In turn, this implies that we can write the carré du champ operator as $\Gamma(f,g)=\sum_{i,j \in \{1,\ldots,n\}} g^{ij}\partial_i f \partial_j g$, $f,g \in\mathcal{A}_0$. A diffusion operator as in \eqref{diffusion-def2} is said to be \textit{elliptic} if

\begin{equation*}
    \mathfrak{g}(V,V)=\sum_{i,j \in \{1,\ldots,n\}} g^{ij}(x) V_iV_j \geq 0 \text{, } \forall V=(V_i)_{i=1}^n \in \mathbb{R}^n 
\end{equation*}
and
\begin{equation*}
    \mathfrak{g}(V,V)=0 \implies V=0.
\end{equation*}

Loosely, the fact that a diffusion operator $L$ is elliptic is important because it ensures that any solution on $[0,s] \times \mathcal{O}$ of the heat equation $\partial_t u = Lu$, $u(0,x)=u_0$, where $s \in \mathbb{R}_{>0}$ and $\mathcal{O}\subseteq \mathbb{R}^n$ is open in the Euclidean topology, is $\mathcal{C}^\infty$ on $(0,s) \times \mathcal{O}$, no matter the initial condition $u_0$. 
%We are ready for the main result of the section.

\subsection{Background on Diffusion Markov Triples}\label{diff-mak-tr-back}

We first introduce Diffusion Markov Triples.

\begin{definition}[Diffusion Markov Triple]\label{diff-mark-trip}
    A Diffusion Markov Triple $(E,\mu,\Gamma)$ is composed of a measure space $(E,\mathcal{F},\mu)$, a class $\mathcal{A}_0$ of bounded measurable functionals on $E$, and a symmetric bilinear operator (the carré du champ operator) $\Gamma:\mathcal{A}_0 \times \mathcal{A}_0 \rightarrow \mathcal{A}_0$, satisfying properties D1-D9 in \citet[Section 3.4.1]{bakry}. We report them here to keep the paper self-contained.
    \begin{itemize}
        \item[(DMT1)] Measure space $(E,\mathcal{F},\mu)$ is a \textit{good measure space}, that is, there is a countable family of sets which generates $\mathcal{F}$ (up to sets of $\mu$-measure $0$), and both the decomposition and bi-measure theorems apply. Measure $\mu$ is $\sigma$-finite and, when finite, assumed to be a probability.
        \item[(DMT2)] $\mathcal{A}_0$ is a vector space of bounded measurable functionals on $E$, which is dense in every $\mathbb{L}^p(\mu)$, $1\leq p <\infty$, stable under products (that is, $\mathcal{A}_0$ is an algebra) and stable under the action of smooth ($\mathcal{C}^\infty$) functions $\Psi:\mathbb{R}^k \rightarrow \mathbb{R}$ vanishing at $0$, i.e. such that $\Psi(0)=0$.
        \item[(DMT3)] The carré du champ operator $\Gamma$ is positive, that is, $\Gamma(f) \equiv \Gamma(f,f) \geq 0$, for all $f\in\mathcal{A}_0$.\footnote{The carré du champ operator $\Gamma$ also satisfies \citet[Equation (3.1.1)]{bakry}, that is, $\int_E \Gamma(g,f^2) \text{d}\mu + 2 \int_E g \Gamma(f) \text{d}\mu = 2 \int_E \Gamma(fg,f) \text{d}\mu$.} It satisfies the following diffusion hypothesis: for every smooth ($\mathcal{C}^\infty$) function $\Psi$ vanishing at $0$, and every $f_1,\ldots,f_k,g\in\mathcal{A}_0$,
        \begin{equation}\label{diff-gamma-gen}
            \Gamma(\Psi(f_1,\ldots,f_k),g)=\sum_{i=1}^k \partial_i \Psi(f_1,\ldots,f_k) \Gamma(f_i,g).
        \end{equation}
        \item[(DMT4)] For every $f \in \mathcal{A}_0$, there exists a finite constant $C(f)$ such that for every $g \in \mathcal{A}_0$,
        $$\left| \int_E \Gamma(f,g) \text{d}\mu \right| \leq C(f) \|g\|_2.$$
        The Dirichlet form $\mathcal{E}$ is defined for every $(f,g) \in \mathcal{A}_0 \times \mathcal{A}_0$ by
        $$\mathcal{E}(f,g)=\int_E \Gamma(f,g) \text{d}\mu,$$
        and we write $\mathcal{E}(f,f) \equiv \mathcal{E}(f)$ for notational convenience. The domain $\mathcal{D}(\mathcal{E})$ of the Dirichlet form $\mathcal{E}$ is the completion of $\mathcal{A}_0$ with respect to the norm $\| f\|_\mathcal{E}= [\|f\|_2^2 + \mathcal{E}(f)]^{1/2}$. The Dirichlet form $\mathcal{E}$ is extended to $\mathcal{D}(\mathcal{E})$ by continuity together with the carré du champ operator $\Gamma$.
        \item[(DMT5)] $L$ is a linear operator on $\mathcal{A}_0$ defined by and satisfying the integration by parts formula
        $$\int_E gLf\text{d}\mu = - \int_E \Gamma(f,g) \text{d}\mu,$$
        for all $f,g\in\mathcal{A}_0$. A consequence of condition (DMT3) is that
        \begin{align}\label{diff-l-gen}
        \begin{split}
            L\left( \Psi(f_1,\ldots,f_k) \right) &= \sum_{i=1}^k \partial_i \Psi(f_1,\ldots,f_k) Lf_i \\
            &+ \sum_{i,j=1}^k \partial^2_{ij} \Psi(f_1,\ldots,f_k) \Gamma(f_i,f_j),
        \end{split}
        \end{align}
        for a smooth ($\mathcal{C}^\infty$) functions $\Psi:\mathbb{R}^k \rightarrow \mathbb{R}$ such that $\Psi(0)=0$, and $f_1,\ldots,f_k\in\mathcal{A}_0$. Hence $L$ too satisfies the diffusion property as we introduced it in Definition \ref{diffusion}.
        \item[(DMT6)] For the operator L defined in (DMT5), $L(\mathcal{A}_0) \subset \mathcal{A}_0$, i.e. $Lf\in\mathcal{A}_0$, for all $f\in\mathcal{A}_0$.
        \item[(DMT7)] The domain $\mathcal{D}(L)$ of the operator $L$  is defined as the set of $f\in\mathcal{D}(\mathcal{E})$ for which there exists a finite constant $C(f)$ such that, for any $g\in \mathcal{D}(\mathcal{E})$,
        $$\left| \mathcal{E}(f,g)\right| \leq C(f) \|g\|_2.$$
        On $\mathcal{D}(L)$, $L$ is extended via the integration by parts formula for every $g\in \mathcal{D}(\mathcal{E})$. 
        \item[(DMT8)] For every $f\in\mathcal{A}_0$, $\int_E Lf \text{d}\mu=0$. 
        \item[(DMT9)] The semigroup $\mathbf{P}=(P_t)_{t \geq 0}$ is the symmetric semigroup with infinitesimal generator $L$ defined on its domain $\mathcal{D}(L)$. In general, it is sub-Markov, that is, $P_t(\mathbbm{1})\leq \mathbbm{1}$, for all $t \geq 0$, where $\mathbbm{1}$ is the constant function equal to $1$. For it to be Markov, $P_t(\mathbbm{1})=\mathbbm{1}$ must hold for all $t \geq 0$.  Moreover, $\mathcal{A}_0$ is not necessarily dense in the domain $\mathcal{D}(L)$ with respect to the domain norm $\| f\|_{\mathcal{D}(L)}= [\|f\|_2^2 + \|Lf\|_2^2]^{1/2} \equiv \|f\|_{\mathbb L^2(\mu)}+\| L f\|_{\mathbb L^2(\mu)}$. By (DMT8), we have that $\mu$ is invariant for $\mathbf{P}$. In addition, from the symmetry of $\Gamma$ we have that $\mu$ is reversible for $\mathbf{P}$.
    \end{itemize}
\end{definition}

Given Definition \ref{diff-mark-trip}, we see how $L$ is a second order differential operator, $\Gamma$ is a first order operator in each of its arguments, and $\Gamma_2$ (see Definition \ref{iter-cdc}) is a second order differential in each of its arguments.

Before recalling adjoints and essential self-adjointness, we briefly fix notation on the graph norm and closedness for unbounded operators, since we will use approximation on $\mathcal D(L)$ through a core.

Let $(H,\|\cdot\|_H)$ be a Banach space and let $A:\mathcal D(A)\subset H\to H$ be a linear operator.
The \emph{graph} of $A$ is
\[
\mathrm{Graph}(A)\coloneqq \{(f,Af): f\in\mathcal D(A)\}\subset H\times H,
\]
and the associated \emph{graph norm} on $\mathcal D(A)$ is
\[
\|f\|_{\mathcal D(A)} \coloneqq \|f\|_H + \|Af\|_H.
\]
We recall that $A$ is \emph{closed} if and only if $\mathrm{Graph}(A)$ is closed in $H\times H$.
Equivalently, $A$ is closed if and only if $(\mathcal D(A),\|\cdot\|_{\mathcal D(A)})$ is a Banach space.
 Indeed, the map $f\mapsto (f,Af)$ is an isometric embedding of $\mathcal D(A)$ into $H\times H$ endowed with
$\|(u,v)\|_{H\times H}\coloneqq \|u\|_H+\|v\|_H$, and completeness is equivalent to closedness of the range. 
In our setting, $H=\mathbb L^2(\mu)$ and the generator $ L$ of the strongly continuous Markov
semigroup $( P_t)_{t\ge 0}$ is closed; Hence, $\mathcal D( L)$ is complete for the graph norm
$\|\cdot\|_{\mathcal D( L)}$.

When we write that $\mathcal A_0$ is dense in $\mathcal D(L)$ for the graph norm, we mean that
$\mathcal A_0$ is a \emph{core} for $L$: For every $f\in\mathcal D(L)$ there exists
$(f_n)_n\subset \mathcal A_0$ such that
\[
\|f_n-f\|_{\mathbb L^2(\mu)} + \|L f_n-L f\|_{\mathbb L^2(\mu)} \rightarrow 0.
\]
This approximation is precisely what allows us to extend identities first proved on $\mathcal A_0$
(e.g. integration by parts and the Poincar\'e inequality) to all $f\in\mathcal D(L)$ by continuity.

We now introduce the concepts of adjoint operator, self-adjointness and essential self-adjointness (ESA) \citep[Section 3.4.2]{bakry}. The latter allow us in Theorem \ref{thm-main-2}, to relax the requirement in Theorem \ref{thm-main-1} that $\underline{L} $ and $\overline{L} $ are elliptic operators.

\begin{definition}[Adjoint Operator]\label{adj-oper}
    Let $(E,\mu,\Gamma)$ be a diffusion Markov triple, and consider the operator $L$ associated with it. The domain $\mathcal{D}(L^*)$ is the set of functions $f\in\mathbb{L}^2(\mu)$ such that there exists a finite constant $C(f)$ for which, for all $g\in\mathcal{A}_0$,
    $$\left| \int_E fLg \text{d}\mu \right| \leq C(f) \|g\|_2.$$
    On this domain, the \textit{adjoint operator} $L^*$ is defined  by integration by parts: For any $g\in\mathcal{A}_0$,
    $$L^*(f)=\int_E fLg \text{d}\mu = \int_E gL^*f \text{d}\mu.$$   
\end{definition}
%\mathbb{L}^2(\mu)\ni 

By symmetry of $L$ -- as we have seen in \eqref{eq-symm-gen} -- it holds that (in general) $\mathcal{D}(L) \subset \mathcal{D}(L^*)$, and $L=L^*$ on $\mathcal{D}(L)$. That is, $L$ is \textit{self-adjoint} on $\mathcal{D}(L)$. Being self-adjoint on $\mathcal{D}(L)$, though, does not mean that $L=L^*$. The self-adjointness property of $L$ refers to the dual operator constructed in the same way as $L^*$, but where $\mathcal{D}(L)$ replaces $\mathcal{A}_0$. In other words, as we have seen in Definition \ref{diff-mark-trip}.(DMT9), $\mathcal{A}_0$ is not necessarily dense in $\mathcal{D}(L)$ with respect to the domain norm $\| \cdot\|_{\mathcal{D}(L)}$. This happens if and only if $\mathcal{D}(L)=\mathcal{D}(L^*)$. In this case, the extension of $L$ from
$\mathcal{A}_0$ to a larger domain as a self-adjoint operator is unique. 

\begin{definition}[Essential Self-Adjointness -- ESA]\label{esa}
    The operator $L$ is said to be essentially self-adjoint if $\mathcal{D}(L)=\mathcal{D}(L^*)$ (recall that $\mathcal{D}(L^*)$ is defined with respect to $\mathcal{A}_0$). Equivalently, $\mathcal{A}_0$ is dense in $\mathcal{D}(L)$ in the topology induced by $\| \cdot\|_{\mathcal{D}(L)}$.
\end{definition}

As we can see, the ESA property depends on the choice of $\mathcal{A}_0$. In addition, ESA holds for all elliptic operators on complete Riemannian manifolds where $\mathcal{A}_0$ is the class of smooth compactly supported functions. That is, it is always satisfied when $E$ is a Euclidean space or a complete Riemannian manifold. The other implication, though, need not hold -- that is, there are ESA operators that are not elliptic as in \ref{riemann-manif}. As we shall see, ESA alone is enough to guarantee ergodicity à la Corollary \ref{limiting-cor}.

The ESA property does not automatically hold for a diffusion Markov triple $(E,\mu,\Gamma)$. To derive sufficient conditions for it, we need to introduce the concept of an extension of $\mathcal{A}_0$ \citep[Section 3.3.1]{bakry}.

\begin{definition}[Extended Algebra $\mathcal{A}$]\label{ext-alg}
    Let $\mathcal{A}_0$ be the algebra associated with a diffusion Markov triple $(E,\mu,\Gamma)$. $\mathcal{A}$ is an algebra of measurable functions on $E$ containing $\mathcal{A}_0$, containing the constant functions, and satisfying the following requirements.
    \begin{itemize}
        \item[(i)] For all $f\in\mathcal{A}$ and all $h\in\mathcal{A}_0$, $hf\in\mathcal{A}_0$.
        \item[(ii)] For any $f\in\mathcal{A}$, if $\int_E hf \text{d}\mu \geq 0$ for all positive $h\in\mathcal{A}_0$, then $f \geq 0$.
        \item[(iii)] $\mathcal{A}$ is stable under composition with smooth ($\mathcal{C}^\infty$) functions $\Psi:\mathbb{R}^k \rightarrow \mathbb{R}$.
        \item[(iv)] The operator $L:\mathcal{A} \rightarrow \mathcal{A}$ is an extension of the infinitesimal generator $L$ on $\mathcal{A}_0$. The carré du champ operator $\Gamma$ is also defined on $\mathcal{A} \times \mathcal{A}$ as in Definition \ref{carre}, with respect to the extension of $L$.
        \item[(v)] For every $f\in\mathcal{A}$, $\Gamma(f) \geq 0$.
        \item[(vi)] The operators $\Gamma$ and $L$ satisfy \eqref{diff-gamma-gen} and \eqref{diff-l-gen}, respectively, for any smooth ($\mathcal{C}^\infty$) functions $\Psi:\mathbb{R}^k \rightarrow \mathbb{R}$.
        \item[(vii)] For every $f \in\mathcal{A}$ and every $g \in\mathcal{A}_0$, the following holds
        $$\mathcal{E}(f,g)=\int_E \Gamma(f,g) \text{d}\mu= - \int_E gLf \text{d}\mu= - \int_E fLg \text{d}\mu.$$
        \item[(viii)] For every $f\in\mathcal{A}_0$ and every $t\geq 0$, $P_tf\in\mathcal{A}$.
    \end{itemize}
\end{definition}

When $E$ is a Euclidean space or a complete Riemannian manifold, we have that $\mathcal{A}_0$ is the class of smooth ($\mathcal{C}^\infty$) compactly supported functionals on $E$, while $\mathcal{A}$ is the class of smooth ($\mathcal{C}^\infty$) functionals. We now present two concepts, connexity and weak hypo-ellipticity, that allow to ``import'' some of the geometric structure of the Riemannian manifolds considered in \ref{riemann-manif} to the more general diffusion Markov triple setting \citep[Section 3.3.3]{bakry}. In addition, as we can see in the proof of Theorem \ref{thm-main-2}, in the context of imprecise diffusion Markov tuples they are sufficient for the ESA property to hold.  

\begin{definition}[Connexity and Weak Hypo-Ellipticity]\label{conn-w-hypo-ell}
    Let $(E,\mu,\Gamma)$ be a diffusion Markov triple, and let $\mathcal{A}$ be the extension of the algebra $\mathcal{A}_0$ associated with $(E,\mu,\Gamma)$. Then, $(E,\mu,\Gamma)$ is said to be \textit{connected} if $f\in\mathcal{A}$ and $\Gamma(f)=0$ imply that $f$ is constant. This is a local property for functionals in $\mathcal{A}$. $(E,\mu,\Gamma)$ is \textit{weakly hypo-elliptic} if, for every $\lambda\in\mathbb R$, any $f\in\mathcal{D}(L^*)$ satisfying $L^*f=\lambda f$ belongs to $\mathcal{A}$.
\end{definition}

Notice also that in this more general setting, the Bakry-Émery curvature condition is the same as that in Definition \ref{b-e-curv}, where the sufficiently rich class corresponds to the extended algebra $\mathcal{A}$.

%% If you have bib database file and want bibtex to generate the
%% bibitems, please use
%%
\bibliographystyle{elsarticle-harv} 
\bibliography{tropical}

%% else use the following coding to input the bibitems directly in the
%% TeX file.

%% Refer following link for more details about bibliography and citations.
%% https://en.wikibooks.org/wiki/LaTeX/Bibliography_Management

\end{document}

\endinput
%%
%% End of file `elsarticle-template-harv.tex'.